\newtheorem{theorem}{Theorem}
\newtheorem{remark}[theorem]{Remark}
\newtheorem{lemma}[theorem]{Lemma}
\newtheorem{proposition}[theorem]{Proposition}
\newtheorem{assumption}[theorem]{Assumption}
\newtheorem{definition}[theorem]{Definition}
\crefname{algorithm}{Algorithm}{Algorithms}
\crefname{theorem}{Theorem}{Theorems}
\crefname{lemma}{Lemma}{Lemmas}
\crefname{assumption}{Assumption}{Assumptions}
\crefname{proposition}{Proposition}{Propositions}
\crefname{figure}{Figure}{Figures}
\Crefname{figure}{Figure}{Figures}
\crefname{table}{Table}{Tables}
\Crefname{table}{Table}{Tables}
\newcommand*\dd{\mathop{}\!\mathrm{d}}
\DeclareMathOperator*{\TV}{TV}
\DeclareMathOperator*{\TR}{TR}
\DeclareMathOperator*{\LR}{LR}
\DeclareMathOperator*{\conv}{conv}
\newcommand{\N}{\mathbb{N}}
\newcommand{\Z}{\mathbb{Z}}
\newcommand{\R}{\mathbb{R}}
\newcommand{\tripjref}[1]{\hyperref[{eq:tr-ip-j}]{(TR-IP({#1}))}}
\newcommand{\optionaldesc}[2]{%
  \phantomsection
  #1\protected@edef\@currentlabel{#1}\label{#2}%
}
\DeclareMathOperator*{\argmin}{arg\,min}
\DeclareMathOperator*{\argmax}{arg\,max}
\title{Efficient Solution of Discrete Subproblems
Arising in Integer Optimal Control with Total Variation Regularization}
\author{Marvin Severitt
\thanks{TU Dortmund University
        (\texttt{marvin.severitt@tu-dortmund.de})}
        \and
Paul Manns
\thanks{TU Dortmund University
        (\texttt{paul.manns@tu-dortmund.de})}}
\begin{document}
\maketitle
\begin{abstract}
We consider a class of integer linear programs (IPs) that arise
as discretizations of trust-region 
subproblems of a trust-region algorithm for the solution of control problems, where the control input is an
integer-valued function on a one-dimensional domain
and is regularized with a total variation
term in the objective, which may be interpreted as a
penalization of switching costs between different control modes.

We prove that solving an instance of the considered problem class is 
equivalent to solving a resource constrained shortest path problem
(RCSPP) on a layered directed acyclic graph. This structural
finding yields an algorithmic solution approach based on topological 
sorting and corresponding run time complexities that are quadratic in 
the number of discretization intervals of the underlying control problem,
the main quantifier for the size of a problem instance.
We also consider the solution of the RCSPP with an $A^*$ algorithm.
Specifically, the analysis of a Lagrangian relaxation yields a
consistent heuristic function for the $A^*$ algorithm and a
preprocessing procedure, which can be employed to accelerate the
$A^*$ algorithm for the RCSPP without losing optimality of the
computed solution.

We generate IP instances by executing the trust-region algorithm on several 
integer optimal control problems. The numerical results show that the
accelerated $A^*$ algorithm and topological sorting outperform a general
purpose IP solver significantly. Moreover, the accelerated $A^*$ algorithm
is able to outperform topological sorting for larger problem instances.
\end{abstract}

\section{Introduction}

Integer optimal control problems (IOCPs)---optimization problems
constrained by (partial) differential equations with
integer-valued control input functions---are versatile
modeling tools with diverse applications from topology 
optimization, see e.g.\
\cite{svanberg2007sequential,haslinger2015topology,liang2019topology,leyffer2021convergence}, over 
energy management of buildings, see e.g.\
\cite{zavala2010proactive},
to network transportation problems, e.g.\ traffic flow as considered in
\cite{goettlich2014optimization,goettlich2017partial} or gas flow
as considered in
\cite{pfetsch2015validation,hante2017challenges,habeck2019global}.

A computationally efficient algorithmic solution approach that provides optimal
approximation properties for many 
IOCPs under appropriate assumptions
on the underlying differential equations is the
so-called combinatorial integral approximation, see e.g.\
\cite{sager2011combinatorial,sager2012integer,hante2013relaxation,jung2015lagrangian,manns2020multidimensional,kirches2021compactness}.

The optimality principle underlying the combinatorial integral approximation
comes at the cost of high-frequency switching of the resulting control input
function between different integers, see Figure 5 in 
\cite{manns2020multidimensional}
or Figures 3 and 4 \cite{kirches2021compactness}, which impairs the implementability
of the resulting controls. This behavior cannot be avoided
if an optimal control function for the continuous relaxation is not
already integer-valued.
Recent research has produced results to alleviate this problem by promoting
integer-valued control functions in the continuous relaxation, see
\cite{manns2021relaxed}, and minimizing switching costs
while maintaining
approximation guarantees in the combinatorial integral approximation,
see \cite{bestehorn2019switching,bestehorn2020mixed,bestehorn2020matching,bestehorn2021switching,sager2021mixed}.

However, even combining both approaches cannot avoid this problem in many situations
as is pointed out and visualized in sections 4 and 5 in \cite{manns2021relaxed},
which motivates to seek alternative approaches to mitigate high-frequency
oscillations in IOCPs. 
\cite{leyffer2021sequential} propose a novel trust-region algorithm, which---after 
discretization of the underlying differential equation---produces
a sequence of integer linear programs (IPs) as trust-region subproblems
for control functions that are defined on one-dimensional domains.
Switching costs are modeled with a penalty in the objective and
not approximated but modeled exactly with linear inequalities in
each subproblem, thereby yielding a structure-preserving algorithm. The
subproblems are solved with a general purpose IP solver in 
\cite{leyffer2021sequential}, which yields run times that are several orders of magnitude
higher than those of the combinatorial integral approximation, specifically
the approach proposed in \cite{bestehorn2020mixed},
where switching costs in the form of penalty terms are considered
in the approximation problems. This shortcoming is addressed in this work.

\paragraph{Contribution}
Minimizing switching costs in the combinatorial integral approximation with
penalty terms are treated with a shortest path approach in 
\cite{bestehorn2020mixed}. We transfer these ideas to the trust-region 
subproblems that arise in algorithm proposed in \cite{leyffer2021sequential}
and prove equivalence to resource constrained shortest path problems (RCSPPs)
on layered directed acyclic graphs (LDAGs).

We provide run time estimates of a topological sorting approach and analyze
an $A^*$ algorithm for the resulting RCSPPs, which yields that both
approaches are  pseudo-polynomial solution algorithms and the problem is 
fixed-parameter tractable. In particular, we show that the subproblems are 
generally NP-hard but the NP-hardness stems from the set of integers that 
constitute the admissible values for the control function of the underlying
IOCP. This set is constant over all subproblems of one run of the trust-region
algorithm and generally small, usually containing only between 2 and 10
values depending on the application.

We analyze a Lagrangian relaxation of the IP formulation that yields upper and
lower bounds on subpaths in the RCSPP formulation and provides a consistent and
monotone heuristic, which yields that an $A^*$ algorithm can be accelerated 
without losing optimality of the computed path. We also analyze a dominance 
principle to further reduce the search space. We evaluate the approach by 
executing the trust-region algorithm on instances of two classes of IOCPs.
We solve the generated subproblems with different algorithmic approaches, 
specifically topological sorting, the general purpose IP solver SCIP, see 
\cite{GamrathEtal2020ZR}, and the $A^*$ algorithm with the aforementioned
accelerations. The general purpose IP solver is several orders of 
magnitude slower than the other two approaches. The $A^*$ algorithm is
able to outperform topological sorting for larger sizes of the
subproblems, thereby suggesting to choose the subproblem solver
depending on the subproblem size.

\paragraph{Structure of the Remainder}
We introduce some notation that we use in the remainder of the
manuscript below. Then we introduce the superordinate IOCPs and the
trust-region algorithm in \S\ref{sec:trust-region alg}.
We continue by presenting and analyzing the IPs that arise as discretized 
trust-region subproblems and which are the main object of our investigation
in \S\ref{sec:discretized_subproblem}, where we also show their
equivalence to RCSPPs on LDAGs. We show how the $A^*$ algorithm
may be accelerated using information obtained from the Lagrangian
Relaxation in \S\ref{sec:lagrangian}. The set of our computational
experiments is described \S\ref{sec:computational_experiments}. The
results are presented in \S\ref{sec:numresults}.
We draw a conclusion in \S\ref{sec:conclusion}.

\paragraph{Notation}
For a natural number $n \in \N$ we define the notation
$[n] \coloneqq \{1,\ldots,n\}$. For a scalar $a \in \R$ and a
set $B \subset \R$ we abbreviate $B - a \coloneqq \{b - a\,|\, b \in B\}$.

\section{IOCPs and Trust-region Algorithm}
\label{sec:trust-region alg}
We briefly state the trust-region algorithm and the class of IOCPs such that the analyzed problems
arise as trust-region subproblems after discretization. The general IOCP reads
\begin{gather}\label{eq:p}
\begin{aligned}
\min_{x \in L^2(0,T)} & F(x) + \alpha \TV(x) \eqqcolon J(x) \\
\text{~~s.t.~~} &
x(t) \in \Xi \text{ for almost all (a.a.) } t \in (0,T),
\end{aligned}
\tag{IOCP}
\end{gather}
where $T > 0$, $L^1(0,T)$ and $L^2(0,T)$ denote the spaces of integrable and square-integrable
functions on $(0,T)$, $F : L^2(0,T) \to \R$ denotes the main part of the objective that abstracts
from the underlying differential equation, $x$ is the optimized $\Xi$-valued control function,
and $\Xi \subset \Z$ is a finite set of integers.
The term $\alpha \TV(x)$ models the switching costs of the function $x$, where $\alpha > 0$ is a positive
scalar and $\TV(x) \in [0,\infty]$ is the total variation of $x$, which amounts to the sum of the jump heights of
$x$ because $x$ takes only finitely many values, see \cite{leyffer2021sequential}.

The trust-region subproblem that is proposed in \cite{leyffer2021sequential} reads
\begin{gather}\label{eq:tr}
\begin{aligned}
\min_{d \in L^1(0,T)} & (\nabla F(x), d)_{L^2(0,T)} + \alpha \TV(x + d) - \alpha\TV(x)
\eqqcolon \ell(x, d) \\
\text{~~s.t.~~}
& x(t) + d(t) \in \Xi \text{ for a.a.\ } t \in (0,T),\\
& \|d\|_{L^1(0,T)} \le \Delta
\end{aligned}
\tag{TR}
\end{gather}
for a trust-region radius $\Delta > 0$. We denote the instance of
\eqref{eq:tr} for a given control function
$x$ and a given trust-region radius $\Delta > 0$ by $\TR(x,\Delta)$.

The trust-region algorithm is given in \cref{alg:slip} and works as follows. In
every outer iteration (indexed by $n$) the trust-region radius is
reset to the input $\Delta^0 > 0$ and
an inner loop (iteration index $k$) is triggered. In the inner iteration the
trust-region subproblem is solved for the trust-region radius. Then the algorithm terminates
if the predicted reduction (objective of the trust-region subproblem) is zero for a positive 
trust-region radius, which implies a necessary optimality condition under appropriate assumptions
on the function $F$ according to \cite{leyffer2021sequential}.
If the computed iterate can be accepted---that is, if the actual reduction is at least a fraction
of the linearly predicted reduction---the inner loop terminates and the
current iterate is updated. If the step cannot be accepted (is rejected),
then the trust-region is halved and the next inner iteration of the inner loop is triggered.
\begin{algorithm}[H]
\caption{Sketch of the trust-region algorithm from \cite{leyffer2021sequential}}\label{alg:slip}
\textbf{Input: } $x^0$ feasible for \eqref{eq:p}, $\Delta^0 > 0$, $\rho \in (0,1)$
\begin{algorithmic}[1]
\For{$n = 1,\ldots$}
\State $k \gets 0$, $\Delta^{n,0} \gets \Delta^0$
\Repeat
\State $d^{n,k} \gets$ minimizer of $\TR(x^{n-1},\Delta^{n,k})$
\Comment{Compute step.}
\If{$\ell(x^{n-1}, d) = 0$}
\Comment{The predicted reduction is zero.}
\State Terminate with solution $x^{n-1}$.
\ElsIf{$J(x^{n - 1}) - J(x^{n-1} + d^{n,k}) < \rho \ell(x^{n-1},d^{n,k})$}
\Comment{Reject step.}
\State $\Delta^{n,k+1} \gets \Delta^{n,k} / 2$, $k \gets k + 1$
\Else
\Comment{Accept step.}
\State $x^n \gets x^{n-1} + d^{n,k}$, $k \gets k + 1$
\EndIf
\Until{$J(x^{n - 1}) - J(x^{n-1} + d^{n,k}) \ge \sigma \ell(x^{n-1},d^{n,k})$}
\EndFor
\end{algorithmic}
\end{algorithm}

\section{The Discretized Trust-region Subproblem}\label{sec:discretized_subproblem}
We provide and explain the IP formulation of the considered problem class in \S\ref{sec:ip_formulation} for which we show the NP-hardness in \S\ref{sec:np-hardness}. Then we construct an LDAG and prove that a shortest path search 
on it is equivalent to solving the IP formulation in \S\ref{sec:spp_reformulation}.
In \S\ref{sec:rcspp_formulation} we factorize the graph by means of an equivalence
relation and obtain the aforementioned equivalence to an RCSPP
on the resulting reduced graph (quotient graph).
In \S\ref{sec:solution algo} we present two algorithms to solve the SPP on the LDAG, which are accelerated
in \S\ref{sec:lagrangian} and compared computationally
to a general purpose IP solver
in \S\ref{sec:computational_experiments}.

\subsection{IP Formulation}\label{sec:ip_formulation}

The optimization problem of our interest reads
\begin{gather}\label{eq:ip}
\begin{aligned}
    \min_{d}\quad & \sum_{i=1}^N c_i d_i + \alpha  \sum_{i=1}^{N-1}  \vert x_{i+1} + d_{i+1} - x_i - d_i \vert \eqqcolon C(d)\\
    \text{s.t.}\quad
    & x_i + d_i \in \Xi \text{ for all } i \in [N],\\
    & \sum_{i=1}^N \gamma_i \vert  d_i \vert \le \Delta,
\end{aligned}
\tag{TR-IP}
\end{gather}
where we have dropped a constant term from the objective.  It arises from \eqref{eq:tr} by discretizing of $(0,T)$ into $N \in \mathbb{N}$ intervals, see \cite{leyffer2021sequential}. The absolute values can be replaced with linear inequality constraints by introducing auxiliary variables, which yields an IP formulation.
Again, $\Xi = \{\xi_1,\ldots,\xi_m\} \subset \Z$, with $\xi_1 < \ldots < \xi_m$, $m \in \N$, 
denotes a finite set of integers.
Because the control functions are $\Xi$-valued, we use the ansatz of 
interval-wise constant functions
that are represented by the vector of step heights $x \in \Xi^N$.
The latter is an input of \eqref{eq:ip} and contains the 
step heights of the previously accepted control function iterate in 
\cref{alg:slip}.
The vector $\gamma \in \N^N$ contains the lengths of the discretization intervals. For uniform
discretization grids this means that $\gamma_i = 1$ for all $i \in [N]$.

The variable $d \in (\Xi - x_1) \times \ldots \times (\Xi - x_N)$ denotes the optimal step given
by the solution of \eqref{eq:ip}, yielding the new discretized control function representative $x + d \in \Xi^N$.
The term $\alpha  \sum_{i=1}^{N-1}  \vert x_{i+1} + d_{i+1} - x_i - d_i \vert$ in the objective models the sum of the jump heights between the subsequent intervals
scaled by a penalty parameter $\alpha > 0$, which is also an input of \eqref{eq:ip}.

Finally, the vector $c \in \R^N$ is a further input of \eqref{eq:ip} and arises from a numerical
approximation of $\nabla F(x)$ in \eqref{eq:tr}. We will frequently
use this setting for the quantities in \eqref{eq:ip} and thus
summarize it in the assumption below.

\begin{assumption}\label{assump:1}
Let $c \in \mathbb{R}^N$, $\alpha \in \mathbb{R}_{\geq 0}$, $\Delta \in \mathbb{N}$, $\Xi=\{\xi_1,\ldots,\xi_m\} \subset \mathbb{Z}$ with $\xi_1<\ldots<\xi_m$ for some $m \in \mathbb{N}$, $x \in \Xi^N$, and $\gamma \in \mathbb{N}^N$ be given.
\end{assumption}

\begin{remark}
Our restriction to $\gamma_i \in \N$ means that the different mesh sizes 
have to be integer multiples of the smallest mesh size. While this
does restrict the possible discretization grids severely, it enables
our algorithmic approach with shortest path algorithms below, which does not
generalize to arbitrary choices $\gamma_i \in (0,\infty)$.
\end{remark}

\begin{remark}
The inputs $x \in V^N$, $c \in \R^N$, and $\gamma \in \N^N$ may change in every outer iteration
of \cref{alg:slip} (the latter only in the presence of an adaptive grid refinement
and coarsening strategy) but are constant over the inner iterations of an outer iteration.
The trust-region radius $\Delta > 0$ changes in every inner iteration.
\end{remark}

\begin{remark}
We restrict the input $\Delta \in \N$ in our analysis to
\begin{gather}\label{eq:trivial_Delta_bound}
\Delta \le \Delta_{\max} \coloneqq (\max \Xi - \min \Xi) \|h\|_\infty N
\end{gather}
because the constraint $\sum_{i=1}^N \gamma_i \vert d_i \vert \le \Delta$ may be dropped in \eqref{eq:ip}
if $\Delta \ge \Delta_{\max}$.
\end{remark}
\subsection{NP-hardness of \eqref{eq:ip}}
\label{sec:np-hardness}
The problem \eqref{eq:ip} resembles the Knapsack problem closely. If we set $\alpha=0$, $x=(0,\ldots,0)$ and $\Xi=\{0,1\}$, the problem is reduced to 
\begin{gather*}
\begin{aligned}
    \min_{d}\quad & \sum_{i=1}^N c_i d_i \\
    \text{s.t.}\quad
    & d_i \in \{0,1\} \text{ for all } i \in [N] \text{ and } 
    \sum_{i=1}^N \gamma_i \vert  d_i \vert \le \Delta,
\end{aligned}
\end{gather*}
which is the well-known Knapsack problem. Obviously, the more general problem 
\eqref{eq:ip} is also NP-hard. We briefly show that even in the case of a
uniform discretization grid, meaning $\gamma_i = 1$ for all $i \in [N]$, 
\eqref{eq:ip} remains NP-hard. 

\begin{proposition}
Let \cref{assump:1} hold. The problem \eqref{eq:ip} with $\gamma_i=1$ for all $i \in [N]$ is NP-hard.
\end{proposition}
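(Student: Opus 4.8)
The plan is to reduce from the (decision version of the) Knapsack problem, exactly as in the reduction sketched in the excerpt for general $\gamma$, but to compensate for the loss of the weights $\gamma_i$ by relocating the Knapsack item weights into the admissible value set $\Xi$. Given a Knapsack instance with items $j \in [n]$, integer weights $w_j$ and values $v_j$, and capacity $W$ (and, without loss of generality, $w_j \le W$ for all $j$), I would build an instance of \eqref{eq:ip} with $\gamma_i = 1$ in which each item corresponds to a single interval whose only two budget-feasible choices are ``not selected'' (deviation $0$) and ``selected'' (deviation $w_j$). The key device is to place the base values of the different items far apart on the integer axis and to center the reference vector $x$ on these base values, so that any deviation that does not stay within an item's own two values immediately exceeds the trust-region budget $\Delta$.

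Concretely, I would set $N = n$, choose widely spaced base points $M_j := (j-1)(2W+1)$, and define the shared value set and reference by
\[
\Xi := \{\,M_j,\; M_j + w_j \;:\; j \in [n]\,\}, \qquad x_j := M_j,
\]
together with $\Delta := W$, $\alpha := 0$ (which is admissible under \cref{assump:1}), and $c_j := -v_j/w_j$. With this choice the two ``local'' options for interval $j$ are $x_j + d_j \in \{M_j, M_j + w_j\}$, i.e.\ $d_j \in \{0, w_j\}$, so that a selection $S \subseteq [n]$ contributes budget $\sum_{j\in S} w_j$ and objective value $\sum_{j\in S} c_j w_j = -\sum_{j\in S} v_j$.

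The correctness argument would proceed in two steps. First, I would show that every feasible $d$ satisfies $d_j \in \{0, w_j\}$ for all $j$: since the base points are spaced by $2W+1$ and $w_j \le W$, the distance from $M_j$ to any element of $\Xi$ other than $M_j$ and $M_j + w_j$ exceeds $W = \Delta$; hence assigning any such ``foreign'' value to interval $j$ already forces $\sum_i \gamma_i |d_i| = \sum_i |d_i| > \Delta$, violating the budget. Consequently the feasible points are in bijection with the Knapsack-feasible selections $S$ (those with $\sum_{j\in S} w_j \le W$), and since $\alpha = 0$ the objective reduces to $-\sum_{j\in S} v_j$, so minimizing it is exactly maximizing the Knapsack value. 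Second, I would check that the construction is polynomial: $N = n$, $|\Xi| = 2n$, and all numbers ($M_j$, the entries of $\Xi$, the $c_j$, and $\Delta$) have bit-length polynomial in the Knapsack input, so the reduction runs in polynomial time and \eqref{eq:ip} with $\gamma_i = 1$ is NP-hard.

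The main obstacle---and the only nontrivial point---is the first step: engineering $\Xi$ and $x$ so that the budget constraint \emph{alone} enforces the per-interval binary choice $d_j \in \{0, w_j\}$, even though $\Xi$ is a single set shared by all intervals. This is precisely what the spacing $M_{j} - M_{j-1} > 2W$ buys. It is worth noting that here $|\Xi| = 2n$ grows with the instance, which is consistent with the later observation that the hardness of \eqref{eq:ip} stems from the size of the admissible set $\Xi$ rather than from the discretization or the trust-region budget. Once the binary structure is secured, the remaining correspondence between solutions is routine bookkeeping.
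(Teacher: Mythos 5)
Your proof is correct, and its core device is the same as the paper's: reduce from Knapsack by relocating the item weights into the admissible set $\Xi$, with gaps larger than $\Delta$ between the per-item ``blocks'' so that the trust-region budget alone forces the binary choice $d_j \in \{0, w_j\}$ on each interval (your spacing $2W+1$ plays the role of the paper's spacing $\Delta+1$). Where you genuinely diverge is in the treatment of the total-variation term. You neutralize it outright by choosing $\alpha = 0$, which \cref{assump:1} permits ($\alpha \in \mathbb{R}_{\geq 0}$), giving a cleaner construction with $N = n$ intervals and no cost corrections. The paper instead fixes an \emph{arbitrary} $\alpha > 0$, uses $N = 2n+1$ intervals with buffer intervals pinned at a common value interleaved between the item intervals, and cancels the resulting TV contribution $2\alpha(x_{2i}+d_{2i})$ by subtracting $2\alpha$ from the linear costs of the item intervals. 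What the paper's extra machinery buys is a slightly stronger statement: NP-hardness holds for every fixed positive penalty parameter, so the hardness is not an artifact of switching off the regularizer that is the distinctive feature of the problem class. Your construction could be upgraded to arbitrary $\alpha > 0$ with a small fix: since $x_{j+1} - x_j = 2W+1 > W \geq |d_{j+1} - d_j|$, the TV sum telescopes to the constant $(n-1)(2W+1)$ plus $\alpha(d_n - d_1)$, which can be absorbed by shifting $c_1$ and $c_n$ by $\pm\alpha$. As stated, though, your argument is complete and valid for the proposition as written, and your closing observation that $|\Xi| = 2n$ grows with the instance matches the paper's own remark that the hardness stems from the size of $\Xi$.
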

\begin{proof}{Proof.}
We show the NP-hardness by a reduction from the Knapsack problem.
Let $\tilde c_1$, $\ldots$, $\tilde c_n$ be the positive costs and
$\tilde w_1$, $\ldots$, $\tilde w_n$ the positive integer weights of the $n$ 
items. Let $K$ be the budget of the Knapsack.

We set $\Delta=K$ and $N=2n+1$. Let $\alpha > 0$ be an arbitrary but fixed.
We construct $\Xi$ such that it contains $N+1$ elements of the form
\begin{align*}
&\sum_{i=1}^k \tilde w_i+(k+1)(\Delta+1) \in \Xi \text{ for all } k \in \{0,\ldots,n-1\} \\ \text{ and } 
&\sum_{i=1}^k \tilde w_i+k(\Delta+1) \in \Xi \text{ for all } k \in \{0,\ldots,n\}.
\end{align*}
In the case that $\Xi$ is ordered from smallest to largest, the difference between two subsequent elements is either $\Delta+1$ or the weight of an item.
For all $i \in [N]$ we choose
\[
x_i = \begin{cases}
\sum_{j=1}^{\frac{i}{2}-1} \tilde w_j+\frac{i}{2}(\Delta+1) &\text{$i$ even},\\
0 &\text{$i$ odd}.
\end{cases}
\]
For odd $i$ this directly implies $d_i=0$, while for even $i$ it follows that $d_i=0$ or $d_i =\tilde w_{\frac{i}{2}}$.
Furthermore, we set
\begin{equation*}
c_i = \begin{cases}
-\frac{\tilde c_{\frac{i}{2}}}
{\tilde w_{\frac{i}{2}}}-2\alpha &\text{$i$ even},\\
0 &\text{$i$ odd}.
\end{cases}
\end{equation*}
This leads to the reduced problem
\begin{align*}
\begin{aligned}
    \min_{d}\ & \sum_{i=1}^{n}  c_{2i} d_{2i} + 2\alpha  \sum_{i=1}^{n} (x_{2i} + d_{2i}) \\
    \text{s.t.}\ & x_i + d_i \in V \text{ for all } i \in \{1,\ldots,N\}\\
            &
              \sum_{i=1}^n \vert d_{2i} \vert \le \Delta .\\
\end{aligned}
\end{align*}
It follows from the choice of the $c_i$ that
$c_{2i}d_{2i}+2\alpha d_{2i} = 
(-\frac{\tilde c_{i}}
{\tilde w_{i}}-2\alpha)
d_{2i}+2\alpha d_{2i} 
= -
\frac{\tilde c_{i}}
{\tilde w_{i}} d_{2i}$.
 Because either $d_{2i}=0$ or $d_{2i}=\tilde w_i$ holds, a solution of \eqref{eq:ip} corresponds to a solution of the Knapsack problem. 
\end{proof}

\begin{remark}
We note that the NP-hardness of the problem \eqref{eq:ip} is shown
by constructing a complicated set $\Xi$. From the application point
of view in integer optimal control, the set $\Xi$ is generally
a small set of integers and remains constant for all instances
of \eqref{eq:ip} that are generated during a run of \cref{alg:bin}.
Thus we construct efficient combinatorial algorithms, where
$\Xi$ is treated as a fixed input parameter in the remainder.
\end{remark}

\subsection{Reformulation of \eqref{eq:ip} as a Shortest Path Problem and Graph Construction}\label{sec:spp_reformulation}
The shortest path problem (SPP) in a graph $G(V,A)$ with weight function $w:A \to \mathbb{R}$ is the problem of determining the minimum weight path $p=\{s,v_1,\ldots, v_N,t\}$ between two nodes $s,t \in V$. The weight of $p$ is defined as $W(p)\coloneqq w_{s,v_1}+\sum_{i=1}^{N-1} w_{v_i,v_{i+1}} +w_{v_N,t}$.
To obtain the equivalent SPP formulation of \eqref{eq:ip}, we introduce
the parameterized family of IPs
\begin{gather}\label{eq:tr-ip-j}
\begin{aligned}
    \min_{\mathclap{d_{j+1},\ldots, d_N}}\quad & \sum_{i=j+1}^N c_i d_i + \alpha  \sum_{i=\max\{j,1\}}^{N-1}  \vert x_{i+1} + d_{i+1} - x_i - d_i \vert\\
    \text{s.t.}\quad
    & x_i + d_i \in \Xi \text{ for all } i \in \{j+1,\dots,N \},\\
    & \sum_{i=j+1}^N \gamma_i \vert  d_i \vert \le \Delta- \sum_{i=1}^j \gamma_i \vert d_i \vert. 
\end{aligned}
\tag{TR-IP($j$)}
\end{gather}
for all $j \in \{0,\ldots,N\}$. It is immediate that \tripjref{0} is an equivalent representation of \eqref{eq:ip}. For $j \in [N]$, the problem \tripjref{$j$}
corresponds to \eqref{eq:ip} with $d_1,\ldots,d_j$ being fixed. Specifically,
the \emph{resource capacity constraint}
$\sum_{i=1}^N \gamma_i\vert d_i\vert \le \Delta$ in \eqref{eq:ip} changes to
\[ \sum_{i=j+1}^N \gamma_i\vert d_i\vert
\le r(\Delta,d_1,\ldots,d_j) \coloneqq \Delta - \sum_{i=1}^j \gamma_i \vert d_i \vert \in \mathbb{Z}, \]
where we call $r(\Delta,d_1,\ldots,d_j)$ the \emph{remaining capacity} for
the resource capacity constraint in \eqref{eq:tr-ip-j}.

Because the problem \eqref{eq:tr-ip-j} cannot admit a feasible point if
$r(\Delta,d_1,\ldots,d_j) < 0$, we say that a \emph{remaining capacity is feasible}
if $r(\Delta,d_1,\ldots,d_j) \ge 0$. Because $\gamma_i \vert d_i \vert \in \mathbb{N}$
for all $i \in [N]$, a feasible capacity can only assume the integer values
between $0$ and $\Delta$.

The structure of the problem, specifically the shrinking feasible set
of \eqref{eq:tr-ip-j} for increasing $j$, allows us to construct a digraph
$G(V, A)$, where the set of nodes $V$ is partitioned into $N$ layers and
the directed edges (in the set $A$) can only exist between subsequent
layers, that is from layer $j$ to $j + 1$ for $j \in [N - 1]$.
\paragraph{Nodes in $G(V,A)$.}
Let $i \in [N]$, then the nodes in the layer $i$ encode 
feasibility of the $d_i$ with respect to the integrality constraint $x_i + d_i \in \Xi$ and the resource capacity 
constraint. Formally, a node $v \in V$ is a triplet
$v = (j, \delta, \eta) \in [N] \times \{x - y\,|\,x, y \in \Xi\}
\times \{0,\ldots,\Delta\}$. For $i \in [N]$, the layer $L_i$
is defined as the set of triplets
\[ L_i \coloneqq \Big\{ (i,\delta,\eta)\,\Big|\,
   \delta \in \Xi - x_i \text{ and }
   \eta \in \{\Delta - |\delta| \gamma_i,\Delta - |\delta| \gamma_i - 1,\ldots,0\}
   \Big\}
\]
and the set of nodes is
$V = L_1\;\dot{\cup}\;\cdots\;\dot{\cup}\;L_N$, 
where $\dot{\cup}$ denotes the disjoint union.
To access the entries of a node $v = (j,\delta,\eta) \in V$,
we define the notation
\[ \ell(v) = j,\enskip
   \tilde{d}(v) = \delta,\enskip\text{and}\enskip
   \tilde{r}(v) = \eta.
\]

\paragraph{Directed edges in $G(V,A)$.}
Then the set $A \subset V \times V$ of directed edges is defined
as
\[ (u, v) \in A\quad:\Longleftrightarrow\quad
\left\{
\begin{aligned}
    \ell(v) = \ell(u) + 1,&\\
    \text{there exists } (a,b) \in A \text{ with } b = u
    \text{ if } \ell(u) > 1,&\text{ and}\\
    \tilde{r}(v) = \tilde{r}(u)
                     - \gamma_{\ell(v)}\vert\tilde d(v)\vert.& \\
\end{aligned}
\right.
\]

The first condition guarantees that we obtain an LDAG
because edges only exist between subsequent layers, while the
second and third condition ensure that the resource capacity
constraint is satisfied inductively.
The weight of an edge $e=(u,v)$ is given by
\[w_{(u,v)} = c_{\ell(v)}  \tilde d(v)+\alpha \vert x_{\ell(v)} - x_{\ell(u)} +  \tilde d(v) -  \tilde d(u) \vert.\]
Furthermore a source $s$ and a sink $t$ are added to the graph. The 
source $s = (0,\emptyset,\Delta)$
is connected to all $v \in V$ in the first layer
with sufficient remaining capacity, that is
\[ (s,v) \in A\quad:\Longleftrightarrow\quad
   \ell(v) = 1 \text{ and }\tilde{r}(v) = \Delta - |\tilde{d}(v)| \gamma_1.
\]
Moreover, we have the weight $w_{(s,v)} = c_1  \tilde{d}(v)$.
The sink $t = (N + 1,\emptyset,0)$ is connected to each node
$n \in V$ in the last layer that has an incoming edge, that is
\[ (v,t) \in A\quad:\Longleftrightarrow\quad
   \text{there exists } u \in V \text{ such that } (u,v) \in A. \]
The weights have the value zero, that is $w_{(v,t)} = 0$.

\Cref{fig:graph_construction} shows a sketch of such
a graph $G(V,A)$. It depicts the layered structure as well as the
directed edges encoding feasible choices from one layer to the next.

The construction detailed above implies immediately the following upper bounds on the cardinalities of
$V$ and $A$ for the graph $G$:
\begin{gather}\label{eq:upper_bound_V}
\vert V \vert \leq N \cdot (\Delta+1) \cdot \vert \Xi \vert+2
\end{gather}
and
\begin{gather}\label{eq:upper_bound_A}
\vert A \vert \leq \vert \Xi \vert ^2 \cdot N \cdot (\Delta+1)+\vert \Xi \vert+(\Delta +1) \cdot \vert \Xi \vert.    
\end{gather}

\begin{figure}
    \centering
	\resizebox{1.0 \textwidth}{!}{%
	
	\begin{tikzpicture}[node distance={65mm}, thick, main/.style = {draw, circle}] 
	\node[main, minimum size = 2.85 cm] (0) {\Huge $s$};
	\node[minimum size =2.25 cm] (-1) [right of=0] {};

	\node[main, minimum size = 2.25 cm] (4) [right of=-1] {\Huge $(1,0,2)$};
	\node[main,minimum size =2.25 cm] (5) [below of = 4] {\Huge $(1,0,1)$}; 
	\node[main, minimum size = 2.25 cm] (6) [below of=5] {\Huge$(1,0,0)$};
	\node[main, minimum size = 2.25 cm] (7) [right of=5] {\Huge$(1,1,1)$}; 
	\node[main, minimum size = 2.25 cm] (8) [above of=7] {\Huge$(1,1,2)$}; 
	\node[main, minimum size = 2.25 cm] (9) [below of=7] {\Huge$(1,1,0)$};
	\node[draw,dotted,fit=(5) (4) (6) (7) (8) (9)] (100){};
	
	\node[] (101) [left of= 5] {\Huge {ordered by capacity $\downarrow$} };
	\node[] (105) [above of = 100] {};
	\node[] (103) [above of= 105, yshift = -3.5cm] {\Huge {ordered by corresponding $x_i+d_i=\xi \in \Xi$ $\rightarrow$}};
	
	\node[minimum size =1.25 cm] (-2) [right of=7] {};

	\node[main, minimum size = 2.25 cm] (14) [right of=-2] {\Huge$(2,0,1)$}; 
	\node[main, minimum size = 2.25 cm] (13) [above of=14] {\Huge$(2,0,2)$}; 
	\node[main, minimum size = 2.25 cm] (15) [below of=14] {\Huge$(2,0,0)$};
	\node[main, minimum size = 2.25 cm] (16) [right of=14] {\Huge$(2,1,1)$}; 
	\node[main, minimum size = 2.25 cm] (17) [above of=16] {\Huge$(2,1,2)$}; 
	\node[main, minimum size = 2.25 cm] (18) [below of=16] {\Huge$(2,1,0)$};
	\node[draw,dotted,fit=(13) (14) (16) (17) (18) (15)] {};
	\node[minimum size =2.85 cm] (-3) [right of=16] {\Huge $\dots$};
    \node[main, minimum size = 2.85 cm] [right of=-3] (20) {\Huge $t$};
	
	\draw[->, dotted] (0)  -- node[midway,xshift=-0.5cm, above left,sloped] {\Huge $0$} (4);
    \draw[->, dotted] (0) -- node[near start, above,sloped] {\Huge $c_1$} (7);
    
    \draw[->,dotted] (4)  -- node[midway,xshift=3.7cm, above left,sloped] {\Huge $0$} (13);
    \draw[->,dotted] (4) -- node[midway, above,sloped] {\Huge $c_2+\alpha$} (16);
    
    \draw[->,dotted] (7)  -- node[midway,xshift=0.5cm, above left,sloped] {\Huge $0$} (14);
    \draw[->,dotted] (7) -- node[midway,xshift=-3.5cm, above,sloped] {\Huge $c_2+\alpha$} (18);

	\end{tikzpicture}
} 
\caption{Example of the graph construction for arbitrary $c \in \mathbb{R}$ and $n \in \mathbb{N}_{\geq 2}$,  $\Xi=\{0,1\}$, $x_1,x_2=0$, $\Delta=2$. Edges always point from left to right as they only connect subsequent layers. Because the remaining capacity is decreasing monotonously, this introduces a level structure where each level represents a remaining capacity between $0$ and $\Delta$. Edges can only point to nodes of the same or a lower level.}\label{fig:graph_construction}
\end{figure}
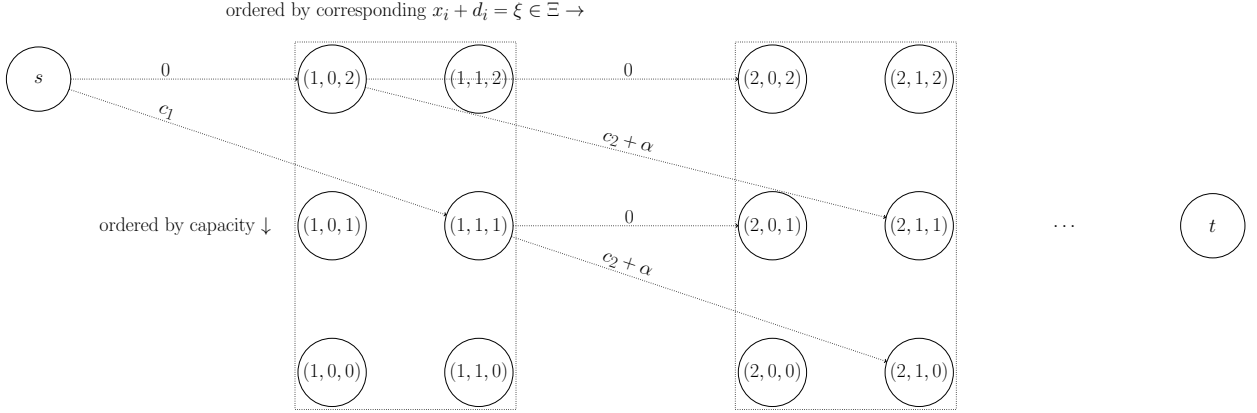

\begin{proposition}\label{prp:sol_of_ip_is_sol_of_spp}
Let \cref{assump:1} hold. Then there is a one-to-one correspondence between solutions of \eqref{eq:ip}
and solutions of the SPP on $G$ from $s$ to $t$. 
\label{prop:SppIP}
\end{proposition}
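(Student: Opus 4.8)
The plan is to exhibit an explicit objective-preserving bijection $\Phi$ between the feasible points of \eqref{eq:ip} and the $s$-$t$ paths in $G$, and then to observe that such a bijection restricts to one between the minimizers of \eqref{eq:ip} and the shortest $s$-$t$ paths. The first observation I would record is that, by the layered construction, every $s$-$t$ path necessarily has the form $p = \{s, v_1, \ldots, v_N, t\}$ with exactly one internal node $v_i \in L_i$ per layer: the source connects only into $L_1$, every internal edge raises the layer index by exactly one, and the sink is reachable only from $L_N$. Hence a path is completely determined by its sequence of internal nodes, and it suffices to match these nodes with the entries of $d$.

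For the forward map $\Phi$, given a feasible $d$ of \eqref{eq:ip} I would set $v_i \coloneqq (i, d_i, r(\Delta, d_1, \ldots, d_i))$ for $i \in [N]$. Feasibility gives $x_i + d_i \in \Xi$, i.e. $d_i \in \Xi - x_i$, while the partial sums $\sum_{k=1}^i \gamma_k |d_k|$ are non-decreasing and bounded by $\Delta$, so $0 \le \tilde r(v_i) \le \Delta - |d_i|\gamma_i$; thus each $v_i$ is a valid node of $L_i$. I would then check that all required edges are present: the source edge holds because $\tilde r(v_1) = \Delta - |d_1|\gamma_1$, each internal edge $(v_i, v_{i+1})$ holds because $r(\Delta, d_1, \ldots, d_{i+1}) = r(\Delta, d_1, \ldots, d_i) - \gamma_{i+1}|d_{i+1}|$ matches the capacity-update rule, and the sink edge holds once $v_N$ is reachable. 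The only nontrivial point is the recursive reachability clause in the definition of $A$, which I would discharge by induction on $i$: the edge $(s, v_1)$ supplies an incoming edge to $v_1$, and once $v_i$ has an incoming edge the edge $(v_i, v_{i+1})$ is admitted.

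For the reverse map, given any $s$-$t$ path $p = \{s, v_1, \ldots, v_N, t\}$ I would set $d_i \coloneqq \tilde d(v_i)$. Membership $v_i \in L_i$ forces $d_i \in \Xi - x_i$, so the integrality constraint holds, while the capacity-update rule together with $\tilde r(s) = \Delta$ yields, by induction, $\tilde r(v_i) = \Delta - \sum_{k=1}^i \gamma_k |d_k|$; since $\tilde r(v_N) \ge 0$ this is exactly the resource capacity constraint $\sum_{k=1}^N \gamma_k |d_k| \le \Delta$. The two maps are mutually inverse because the internal node in layer $i$ is uniquely pinned down by the pair consisting of $d_i$ and the forced remaining capacity $r(\Delta, d_1, \ldots, d_i)$, so $d$ reconstructs the path and the path reconstructs $d$.

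Finally I would verify that $\Phi$ preserves the objective. Summing the edge weights along $p$ gives $W(p) = c_1 d_1 + \sum_{i=1}^{N-1}\bigl(c_{i+1} d_{i+1} + \alpha |x_{i+1} + d_{i+1} - x_i - d_i|\bigr) + 0$, which regroups to $\sum_{i=1}^N c_i d_i + \alpha \sum_{i=1}^{N-1} |x_{i+1} + d_{i+1} - x_i - d_i| = C(d)$. Since $\Phi$ is thus an objective-preserving bijection between the feasible points of \eqref{eq:ip} and the $s$-$t$ paths of $G$, it carries minimizers to shortest paths and back, which is the asserted one-to-one correspondence. I expect the main obstacle to be the careful bookkeeping in the forward direction: confirming that the constructed edges genuinely satisfy the recursive reachability condition in the definition of $A$, and that the capacity trajectory along any path is forced so that no node with a mismatched remaining capacity can occur. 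Both are handled by the layer-by-layer induction on the remaining capacity described above.
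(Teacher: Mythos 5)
Your proposal is correct and follows essentially the same route as the paper's proof: both directions are established via the explicit correspondence $d_i \leftrightarrow \tilde d(v_i)$ with remaining capacity $\tilde r(v_i) = \Delta - \sum_{j=1}^{i} \gamma_j |d_j|$, together with the telescoping identity $W(p) = C(d)$. You are in fact somewhat more careful than the paper, which leaves the verification of the recursive reachability clause in the definition of $A$, the membership $v_i \in L_i$, and the forcing of the capacity trajectory (hence true bijectivity) implicit.
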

\begin{proof}{Proof.}
Let $p=\{s,v_1,\ldots,v_N,t\}$ be a path in $G$. 
The remaining capacity in the node $n_N$ is given by
\[ 0 \le \tilde r(v_N)  =\tilde r(v_{N-1})- \gamma_N \vert  \tilde d(v_N) \vert =\ldots = \tilde r(v_1)-\sum_{i=2}^N \gamma_i \vert  \tilde d(v_i) \vert =  \Delta - \sum_{i=1}^{N}\gamma_i \vert  \tilde d(v_i) \vert.
\]
It follows that the capacity constraint holds for the point
\[ d=(d_1,\ldots,d_N) = (\tilde d(v_1),\ldots,\tilde d(v_N)).
\]
Moreover $x_i+d_i \in \Xi$ holds by construction and thus the point $d$ is 
feasible. Moreover, the cost $C(d)$ of the feasible point $d$ is equal to the weight $W(p)$ of the path $p$, which follows from
\begin{align*}
C(d)
&= \sum_{i=1}^{N}c_i  d_i+ \alpha \sum_{i=2}^{N} \vert x_{i} - x_{i-1} +   d_i -   d_{i-1} \vert \\
&= c_1  \tilde d(v_1)+\sum_{i=2}^{N}(c_i  \tilde d(v_i)+\alpha  \vert x_{i} - x_{i-1} +  \tilde d(v_i) -  \tilde d(v_{i-1}) \vert) = w_{s,v_1} + \sum_{i=2}^{N} w_{v_{i-1},v_i} =W(p).
\end{align*}

Let $d=(d_1,\ldots,d_N)$ be feasible with cost $C(d)$.
This results in the path $p=\{s,v_1,\ldots,v_N,t\}$ with
\[ 
v_i=(\ell(v_i), \tilde d(v_i), \tilde r(v_i))=\left(i, d_i,\Delta-\sum_{j=1}^i \gamma_j \vert d_j \vert\right)
\]
where $\tilde d(v_i)+x_i=d_i+x_i \in \Xi$ for $i \in [N]$ by construction. Exactly as above, the weight $W(p)$ of the path equals the cost $C(d)$. 
Consequently, every feasible point coincides with a path from $s$ to $t$ in $G$. 
\end{proof}

\subsection{Reformulation of the SPP on $G$ as a Resource Constrained Shortest Path Problem}\label{sec:rcspp_formulation}
The resource constrained shortest path problem (RCSPP) on a graph $G_c(V_c,A_c)$ is the problem of determining the shortest path which adheres to a resource constraint. 
To obtain an RCSPP on a quotient graph, we define an equivalence relation $\sim_G$ on the set of nodes. Let $u,v \in V$. Then we can define
\begin{align*}
    u \sim_G v \hspace{0.5em} :\iff \hspace{0.5em} \ell(u) = \ell(v) \quad \text{and} \quad \tilde d(u) =\tilde d(v).
\end{align*}
This means that nodes in the same equivalence class only differ in their remaining capacity. We denote the equivalence class containing $v \in V$ as $[v]$. This naturally leads to a reformulation of the SPP on $G$ as an RCSPP on the quotient graph. Equivalence classes are nodes in the new LDAG $G_c( V_c,  A_c)$ with fully connected subsequent layers. Each layer $i$ contains a node for each feasible choice of $d_i$. Thus a node is a pair of the layer $i$ and the choice of $d_i$. Again, a source $s$ and a sink $t$ are added to the graph $G_c$ and connected to each node of the first and last layer respectively. Since the weights of the edges in $G$ do not depend on the remaining capacity of the incident nodes, the weights are defined exactly as for $G$.
Additionally, we assign a resource consumption to each edge. An edge from the source $s$ to a node $u$ in the first layer has a resource consumption of $ \gamma_{1} \vert \tilde d(u) \vert$, while an edge from a node $u$ to a node $v$ has a resource consumption of $ \gamma_{\ell(v)} \vert \tilde d(v) \vert$. Edges into the sink $t$ have a resource consumption of $0$.
\begin{proposition}\label{prp:sol_of_spp_is_sol_of_rcspp}
Let \cref{assump:1} hold.
Then there is a one-to-one correspondence between solutions of the RCSPP on $G_c$ and solutions of the SPP on $G$.
\end{proposition}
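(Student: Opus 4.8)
The plan is to exhibit an explicit weight-preserving bijection between $s$-$t$ paths in $G$ and the resource-feasible $s$-$t$ paths in $G_c$, induced by the canonical quotient projection, and then to conclude that minimizers are matched under this bijection. First I would define $\pi$ on nodes by $\pi(v) = [v]$ for $v \in V$, together with $\pi(s) = s$ and $\pi(t) = t$, and check that $\pi$ carries edges of $G$ to edges of $G_c$: if $(u,v) \in A$ then $\ell(v) = \ell(u) + 1$, so $([u],[v])$ joins consecutive layers of $G_c$, and since consecutive layers of $G_c$ are fully connected this pair lies in $A_c$. Hence $\pi$ sends every $s$-$t$ path $p = \{s, v_1, \ldots, v_N, t\}$ in $G$ to the $s$-$t$ path $\pi(p) = \{s, [v_1], \ldots, [v_N], t\}$ in $G_c$. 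Weight preservation is then immediate: the edge weights of $G_c$ are defined by the same formula as in $G$ and depend only on $\ell$ and $\tilde d$ of the endpoints, i.e.\ precisely on the data retained by the equivalence class, so $W(\pi(p)) = W(p)$.

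The crux is the correspondence of the resource constraint. For a path $p$ in $G$ the total resource consumption accumulated along $\pi(p)$ equals $\sum_{i=1}^N \gamma_i |\tilde d(v_i)|$, and by the telescoping identity already established in the proof of \cref{prp:sol_of_ip_is_sol_of_spp} this equals $\Delta - \tilde r(v_N) \le \Delta$, using $\tilde r(v_N) \ge 0$; thus $\pi(p)$ is resource-feasible. For the converse I would lift a resource-feasible path $q = \{s, w_1, \ldots, w_N, t\}$ in $G_c$, with choices $d_i \coloneqq \tilde d(w_i)$ satisfying $\sum_{i=1}^N \gamma_i |d_i| \le \Delta$, to the candidate $G$-path with nodes $v_i \coloneqq (i, d_i, \eta_i)$, where $\eta_i \coloneqq \Delta - \sum_{j=1}^i \gamma_j |d_j|$. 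Since every summand $\gamma_j |d_j|$ is nonnegative, the partial sums are monotone, so $\eta_i \ge \eta_N \ge 0$ for all $i$; consequently each $v_i$ is a valid node of $G$ and each pair $(v_{i-1}, v_i)$ satisfies the recursion $\tilde r(v_i) = \tilde r(v_{i-1}) - \gamma_i |\tilde d(v_i)|$ defining $A$, so the lift is a genuine $s$-$t$ path in $G$.

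Finally I would observe that lifting and projecting are mutually inverse---the lift reconstructs exactly the remaining capacities that $G$ prescribes, and these are uniquely determined by the choice sequence---so $\pi$ restricts to a weight-preserving bijection between $s$-$t$ paths in $G$ and resource-feasible $s$-$t$ paths in $G_c$. Because weights are preserved and the two feasible sets are in bijection, a path minimizes weight among all $s$-$t$ paths in $G$ if and only if its image minimizes weight among all resource-feasible $s$-$t$ paths in $G_c$, which yields the claimed one-to-one correspondence of solutions. The only delicate point, and the step I expect to require the most care, is the equivalence between the single global resource bound in the RCSPP and the layerwise nonnegativity of the remaining capacity built into $G$; it rests entirely on the monotonicity of the partial sums $\sum_{j=1}^i \gamma_j |d_j|$, which I would state explicitly, since this is what allows the one aggregated constraint on $G_c$ to reproduce the per-node feasibility encoded in $G$.
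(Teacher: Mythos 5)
Your proof is correct, but it takes a genuinely different route from the paper's. The paper never constructs a map between the two graphs directly: it relays through the IP, showing that a resource-feasible path in $G_c$ yields a feasible point $d$ of \eqref{eq:ip} with $C(d)=W(p)$ and vice versa (reusing the cost computation from \cref{prop:SppIP}), and then concludes via the chain RCSPP on $G_c$ $\leftrightarrow$ \eqref{eq:ip} $\leftrightarrow$ SPP on $G$ established by \cref{prp:sol_of_ip_is_sol_of_spp}. You instead build the explicit quotient projection $\pi(v)=[v]$ and its lift, proving a weight-preserving bijection between \emph{all} $s$-$t$ paths in $G$ and all resource-feasible $s$-$t$ paths in $G_c$, with the key observation---correctly identified as the crux---that the monotonicity of the partial sums $\sum_{j=1}^i \gamma_j|d_j|$ makes the single aggregated bound $\sum_{i=1}^N \gamma_i|d_i|\le\Delta$ on $G_c$ equivalent to the layerwise nonnegativity $\tilde r(v_i)\ge 0$ hard-wired into $G$. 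Your version buys a slightly stronger and more structural statement: it exhibits $G_c$ as literally the quotient of $G$ under $\sim_G$ at the level of path sets, and it makes injectivity explicit by noting that the remaining capacities along any $s$-$t$ path in $G$ are uniquely determined by the choice sequence. The paper's version buys brevity by reusing earlier computations and avoiding the verification that the lift is a genuine path. On that last point, one small gap in your write-up: the definition of $A$ requires, for $\ell(u)>1$, that $u$ itself have an incoming edge, and the source edge $(s,v_1)$ requires $\tilde r(v_1)=\Delta-\gamma_1|\tilde d(v_1)|$; your lift satisfies both, but you should state the short induction (starting from the source edge condition, each $v_i$ acquires an incoming edge, which in turn licenses the edge $(v_i,v_{i+1})$ and finally $(v_N,t)$) rather than citing only the capacity recursion.
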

\begin{proof}{Proof.}
We show that the solution of the RCSPP coincides with the solution of the IP.
Let $p=\{s,v_1,\ldots,v_N,t\}$ be a path in $G_c$. 
Because the path is feasible for the RCSPP,
\[ \sum_{i=1}^N  \gamma_i \vert \tilde d(v_i) \vert \leq \Delta \]
follows.
Thus the point 
\[d=(d_1,\ldots,d_N)=( \tilde d(v_1),\ldots, \tilde d(v_N))\]  
is feasible for the IP. The cost $C(d)$ of the feasible point $d$ matches the weight $W(p)$ of the path as shown in \cref{prop:SppIP}.

Let  $ d=(d_1,\ldots,d_N)$ be a feasible point with cost $C(d)$. This leads to the path $p=\{s,v_1,\ldots,v_N,t\}$ with \[v_i=(\ell(v_i),\tilde  d(v_i))=(i,d_i).\] The weight $W(p)$ of the path $p$ equals the cost $C(d)$ of the point $d$ as above.
Because of the feasibility of $d$, the inequality 
$ \sum_{i=1}^N  \gamma_i \vert \tilde d(v_i) \vert = \sum_{i=1}^N  \gamma_i \vert d_i \vert  \leq \Delta $
guarantees that the path is feasible. A shortest constrained path on $G_c$ coincides with a minimal feasible point of the IP and in turn is equivalent to the shortest path on $G$. 
\end{proof}
\begin{remark}
Because $c_id_i<0$ is possible the weights of the edges may also be negative. To ensure non-negative weights a uniform offset can be added to all weights. Because all paths from $s$ to $t$ have the same length, the cost of all paths is increased by the same fixed amount.
\end{remark}

\subsection{Solution Algorithms}\label{sec:solution algo}
The problem \eqref{eq:ip} can be reduced to an SPP on an LDAG (see \cref{prop:SppIP}), which yields a topological order
of the nodes. The topological order implies that the shortest path from $s$ to $t$ can be found in
$\mathcal{O}(|V| + |A|)= \mathcal{O}(N \cdot \Delta \cdot \vert \Xi \vert^2)$, see \cite[page 655 ff.]{cormen2009algorithm}, 
which yields the existence of a pseudo-polynomial algorithm for \eqref{eq:ip}.
This solution approach provides a better worst case complexity than solving the SPP formulation with Dijkstra's algorithm 
because it avoids the need of a priority queue. It cannot utilize the underlying structure to terminate early, however. 
We set forth to augment Dijkstra's algorithm so that it is able to utilize the problem structure and eventually
outperform the topological sorting-based approach in practice. The derived accelerations transform Dijkstra's
algorithm into an $A^*$ algorithm, which arises from Dijkstra's algorithm by adding an additional heuristic function
$h: V \to \mathbb{R}$. For each node $v$, the evaluation $h(v)$ estimates for the cost to reach the sink $t$ from $v$.
Instead of expanding the node with the lowest path cost, the $A^*$ algorithm expands the node with the lowest sum of
the path cost and the heuristic cost estimation. Ties are broken arbitrarily.

While $A^*$ might be a mere heuristic without further assumptions on $h$, a careful choice of $h$ allows to
conserve the optimality of the solution guaranteed by Dijkstra's algorithm. We provide the corresponding
concept of a \emph{consistent heuristic} and reference the resulting optimality guarantee below.
\begin{definition}
A heuristic function $h:V \to \mathbb{R}$ is called \emph{consistent} if $h(t) = 0$ holds
for the sink $t$ and the triangle inequality is satisfied for every edge, specifically
\[ w_{u,v}+h(v) - h(u) \ge 0 \text{ for all } e=(u,v) \in A. \]
\end{definition}
\begin{proposition}
Let $G(V,A)$ be a graph with source $s$ and sink $t$. The $A^*$ algorithm determines a shortest $s$-$t$ path
if the heuristic is consistent.
\end{proposition}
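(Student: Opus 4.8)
The plan is to reduce the claim to the classical correctness of Dijkstra's algorithm on a graph with non-negative edge weights, which is the special case $h \equiv 0$ and is standard, see \cite{cormen2009algorithm}. The central device is the \emph{reduced weight} function $w'_{u,v} \coloneqq w_{u,v} + h(v) - h(u)$ defined on each edge $(u,v) \in A$. Consistency of $h$ states precisely that $w_{u,v} + h(v) - h(u) \ge 0$ for every edge, so $w'$ is a non-negative weight function on $G$. This non-negativity is exactly the hypothesis under which Dijkstra's settling invariant holds, which is why consistency is the right assumption.

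First I would establish a telescoping identity. For any path $p = \{s,v_1,\ldots,v_k\}$ from the source to a node $v_k$ (writing $v_0 = s$), the accumulated reduced weight satisfies $\sum_{i=1}^{k} w'_{v_{i-1},v_i} = W(p) + h(v_k) - h(s)$, since the intermediate terms $h(v_i)$ cancel in consecutive pairs. Specializing to $v_k = t$ and using $h(t) = 0$ shows that the reduced weight of any $s$-$t$ path equals its true weight $W(p)$ minus the constant $h(s)$. Consequently a path minimizes the reduced weight from $s$ to $t$ if and only if it minimizes the true weight $W$, so the set of shortest $s$-$t$ paths is identical under $w$ and under $w'$.

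Next I would argue that $A^*$ run with the heuristic $h$ explores the nodes in exactly the same order as Dijkstra's algorithm run with the reduced weights $w'$. The quantity $A^*$ uses to prioritize a node $v$ carrying the tentative path cost $g(v)$ is $f(v) = g(v) + h(v)$; by the telescoping identity applied to the discovered path, $g(v) + h(v) = g'(v) + h(s)$, where $g'(v)$ is the tentative cost accumulated under $w'$ along that same path. Because $h(s)$ is a fixed constant that shifts all priorities by the same amount, selecting the minimum-$f$ node from the priority queue is equivalent to selecting the minimum-$g'$ node. Hence the execution trace of $A^*$ on $(G,w,h)$ coincides with that of Dijkstra on $(G,w')$.

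Finally, since $w' \ge 0$, the correctness of Dijkstra's algorithm guarantees that the path it returns is a shortest $s$-$t$ path with respect to $w'$; by the equivalence of shortest paths established via the telescoping identity, this same path is shortest with respect to the original weights $w$, which proves the claim. The main obstacle I anticipate is making the coincidence of the two execution traces fully rigorous—specifically, verifying the invariant that when $A^*$ permanently settles a node its tentative cost $g$ is already optimal. In the reduced-weight picture this is exactly Dijkstra's settling invariant, and it hinges on the non-negativity $w' \ge 0$ together with the observation that the constant offset $h(s)$ does not affect the argmin over the priority queue.
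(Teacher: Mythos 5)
Your proof is correct. Note that the paper does not actually prove this proposition at all: it delegates the proof entirely to \cite[page 82\,ff.]{pearl1984}, where the classical argument establishes, directly for the $A^*$ algorithm with a consistent (monotone) heuristic, the invariant that a node's tentative cost $g$ is already optimal at the moment the node is expanded. Your route is instead a reduction in the spirit of Johnson's reweighting: you introduce the reduced costs $w'_{u,v} = w_{u,v} + h(v) - h(u)$, observe that their non-negativity is precisely the consistency condition, telescope to see that every $s$-$t$ path's reduced weight differs from its true weight by the fixed constant $h(s)$ (using $h(t)=0$), and identify the execution of $A^*$ on $(G,w,h)$ with that of Dijkstra's algorithm on $(G,w')$, whose correctness under non-negative weights is standard. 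This is in fact exactly the observation the paper states informally immediately \emph{after} the proposition, in order to obtain the $\mathcal{O}(|V|\log|V| + |A|)$ complexity bound, so your argument supplies a self-contained proof of the cited fact and simultaneously justifies that complexity remark. One minor point of rigor: the two execution traces coincide only up to tie-breaking in the priority queue (both algorithms break ties arbitrarily), but since Dijkstra's correctness and settling invariant hold for any tie-breaking rule, this does not affect your conclusion; your final paragraph correctly identifies the settling invariant as the crux and discharges it via $w' \ge 0$, which is all that is needed.
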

\begin{proof}{Proof.}
We refer the reader to \cite[page 82\,ff.]{pearl1984}.\end{proof}
If the heuristic function is consistent, the $A^*$ algorithm coincides with Dijkstra's algorithm with the weights
$\bar w_{u,v}=w_{u,v}+h(v) - h(u)$ and thus solves the SPP formulation in
$\mathcal{O}(\vert  V \vert \log(\vert V \vert) + \vert A \vert)$ (\cite{korte2018} page 162).
This worst case complexity is again higher than the aforementioned topological sorting-based approach
described in \cite[page 655\,ff.]{cormen2009algorithm}. The accelerations derived in \S\ref{sec:lagrangian}
below pay off in our computational experiments in 
\S\ref{sec:computational_experiments}-\ref{sec:numresults}
and the $A^*$ algorithm
shows superior performance particularly on larger instances of \eqref{eq:ip}
(where larger shall be understood with respect to the values of $N$ and $\Delta$).

\section{Acceleration of $A^*$ using the Lagrangian Relaxation}\label{sec:lagrangian}
The reformulation as an RCSPP allows us to use the Lagrangian relaxation of 
\eqref{eq:ip} to derive lower and upper bounds for
the RCSPP on the costs of the optimal paths in $G_c$ by following the ideas
from \cite{dumitrescu2003}, which are also valid for the SPP on $G$.
Furthermore, we derive a consistent heuristic function for the $A^*$ algorithm 
from the Lagrangian relaxation. We provide the Lagrangian relaxation in 
\S\ref{sec:lagrangian_relaxation} with respect to the RCSPP formulation
derived in \S\ref{sec:rcspp_formulation}. This Lagrangian relaxation is
used in \S\ref{sec:heuristic} to derive a consistent heuristic
function for the $A^*$ algorithm that operates on the SPP reformulation of \eqref{eq:ip}. We show how an optimal Lagrange multiplier for the
Lagrangian relaxation may be determined in a preprocessing
stage in \S\ref{sec:preprocessing} and provide a dominance principle
to further reduce the search space in \S\ref{sec:dominated}.

\subsection{Lagrangian Relaxation}\label{sec:lagrangian_relaxation}
The authors in \cite{dumitrescu2003} consider the RCSPP in the
generalized form
\begin{gather*}
	\text{z}(s,t,\Delta) = \left\{
	\begin{aligned}
	\min \quad & W(P)\\
	\text{s.t.}\quad & P \in \textbf{P}^{s,t} \ \text{and} \ R(P) \leq \Delta,
	\end{aligned}
	\right. 
\end{gather*}
where $\textbf{P}^{s,t}$ is the set of all paths from $s$ to $t$. The cost of a path $P \in \textbf{P}^{s,t}$ is denoted by $W(P)$, while $R(P)$ is the resource consumption of the path. The Lagrangian relaxation of this formulation is 
\begin{gather*}
	\max_{\lambda \geq 0} \LR (s,t,\Delta,\lambda) \coloneqq \max_{\lambda \geq 0} \left\{
	\begin{aligned}
	\min \quad & W(P)+\lambda(R(P)-\Delta)\\
	\text{s.t.}\quad & P \in \textbf{P}^{s,t}.
	\end{aligned}
	\right. 
\end{gather*}
We call $\lambda^* \in \argmax_{\lambda\geq 0} \LR(s,t,\Delta,\lambda)$ an optimal Lagrange multiplier. If the set of feasible paths for the RCSPP is not empty, the set $\argmax_{\lambda\geq 0} \LR(s,t,\Delta,\lambda)$ is not empty and contains either a single value or a single interval, because the Lagrangian function is concave with respect to $\lambda$. (see \cite{xiao2005}) 
Obviously, the inequality $z(s,t,\Delta) \geq \LR(s,t,\Delta,\lambda)$ holds for all $\lambda \geq 0$. This approach can be extended to subpaths from a node $v$ with a resource consumption $\mu$, which leads to the problem formulation
	\begin{gather*}
	\text{z}(v,t,\Delta-\mu) = \left\{
	\begin{aligned}
	\min \quad & W(P)\\
	\text{s.t.}\quad & P \in \textbf{P}^{v,t} \ \text{and} \ R(P) \leq \Delta- \mu,
	\end{aligned}
	\right.
	\end{gather*}
and its relaxation
	\begin{gather*}
	\LR(v,t,\Delta-\mu,\lambda) = \left\{
	\begin{aligned}
	\min \quad & W(P)+\lambda(R(P)-(\Delta-\mu))\\
	\text{s.t.}\quad & P \in \textbf{P}^{v,t}.
	\end{aligned}
	\right.
	\end{gather*}
The relaxation can be written as
\begin{gather*}
    \LR(v,t,\Delta-\mu,\lambda)= -\lambda(\Delta-\mu)+\zeta(v,t,\lambda) \quad \text{with} \quad 
	\zeta(v,t,\lambda) = \left\{
	\begin{aligned}
	\min \quad & W(P)+\lambda R(P)\\
	\text{s.t.}\quad & P \in \textbf{P}^{v,t}.
	\end{aligned}
	\right.
\end{gather*}
The term $\zeta$ can be calculated as the solution of the shortest path problem on $G_c^\lambda$, which is the graph we obtain by increasing each edge weight of the graph $G_c$ by the product of the Lagrange multiplier $\lambda$ and the resource consumption of the edge.
For all $\lambda \geq 0$ the value of $\LR(v,t,\Delta-\mu,\lambda)$ provides a lower bound for the cost of the remaining path from $v$ to $t$. \cite{dumitrescu2003} use a cutting plane approach to solve the Lagrangian relaxation for the RCSPP. In the process, they obtain a sequence of Lagrange multipliers $\lambda$ for which the corresponding $\zeta$ are calculated. We refer to this set of all calculated $\lambda$ as $\Lambda$.
For a node $v$ and a remaining capacity $\mu$ the term $\LR(v,t,\mu,\lambda)=-\lambda \mu+\zeta(v,t,\lambda)$ is a lower bound for the shortest constrained path from $v$ to $t$ in $G_c$ for an arbitrary $\lambda \geq 0$.  \cite{dumitrescu2003} use $\max_{\lambda \in \Lambda} -\lambda \mu+\zeta(v,t,\lambda)$ to construct lower bounds for each node $v$. If an $s$-$v$ path with a remaining capacity of $\mu$ is found, the path is cut if the sum of the cost of the $s$-$v$ path and the lower bound for the $v$-$t$ path exceeds a known valid upper bound. Any feasible path in $G_c$ yields an upper bound.  Additionally, any subpath in $G_c^\lambda$ from a node $v$ to $t$ obtained during the preprocessing stage can be used to construct an upper bound as the sum of the costs of the shortest $s$-$v$ path and the subpath if the sum of the capacity consumption for the shortest $s$-$v$ path and the subpath is not greater than $\Delta$. 

\subsection{Heuristic Function}\label{sec:heuristic}
The Lagrangian relaxations can be used to obtain not just upper and lower bounds but a consistent heuristic function for the $A^*$ algorithm.

By construction of $G_c$, an equivalence class of vertices of $G$ is a node in $G_c$. This allows us to extend the
Lagrangian relaxations for the nodes of $G_c$ to the nodes of $G$. We recall that a node $v \in  V$ satisfies $v \in [u]$
for $[u] \in V_c$
if
\[ \tilde d(v)=  \tilde d(u) \quad \text{and} \quad \ell(v)=\ell(u).\]

\begin{theorem}
Let \cref{assump:1} hold. Then for all $\lambda \geq 0$ the heuristic function
\[h_\lambda(v)=\LR([v],t,\tilde r(v),\lambda)=-\lambda \tilde r(v)+\zeta([v],t,\lambda)\]
is consistent for the $A^*$ algorithm on $G$.
\label{theo:heuristic}
\end{theorem}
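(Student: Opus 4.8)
The plan is to verify directly the two defining properties of a consistent heuristic: that $h_\lambda(t) = 0$ and that $w_{u,v} + h_\lambda(v) - h_\lambda(u) \ge 0$ for every edge $(u,v) \in A$ of $G$. The guiding observation is that $\zeta(\cdot, t, \lambda)$ is exactly the shortest-path distance to $t$ in the augmented graph $G_c^\lambda$, so the subpath-optimality (Bellman) inequality for shortest paths will do most of the work once the remaining-capacity bookkeeping of $G$ is matched up with the resource consumption in $G_c^\lambda$. First I would dispatch the sink condition: since $\tilde r(t) = 0$ and $[t]$ is the sink, $\zeta([t], t, \lambda)$ is the weight of the empty path and hence $0$, so $h_\lambda(t) = -\lambda \cdot 0 + 0 = 0$.

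For the triangle inequality I would take an arbitrary edge $(u,v) \in A$ and substitute the definition of $h_\lambda$, which gives
\begin{align*}
w_{u,v} + h_\lambda(v) - h_\lambda(u) = w_{u,v} + \lambda\bigl(\tilde r(u) - \tilde r(v)\bigr) + \zeta([v], t, \lambda) - \zeta([u], t, \lambda),
\end{align*}
so that the $-\lambda\tilde r$ terms telescope into $\lambda(\tilde r(u) - \tilde r(v))$. I would then split into cases. For an edge within the layered part of $G$, and likewise for an edge out of the source $s$ (where $\tilde r(s) = \Delta$ and $\tilde r(v) = \Delta - |\tilde d(v)|\gamma_1$), the construction of $A$ yields $\tilde r(v) = \tilde r(u) - \gamma_{\ell(v)}|\tilde d(v)|$, so $\lambda(\tilde r(u) - \tilde r(v)) = \lambda\gamma_{\ell(v)}|\tilde d(v)|$ is precisely $\lambda$ times the resource consumption of the corresponding edge $([u],[v])$ in $G_c$. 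Hence $w_{u,v} + \lambda(\tilde r(u) - \tilde r(v))$ equals the weight $\bar w^\lambda_{[u],[v]}$ of that edge in $G_c^\lambda$, and the whole expression reads $\bar w^\lambda_{[u],[v]} + \zeta([v], t, \lambda) - \zeta([u], t, \lambda)$. This is nonnegative because concatenating the edge $([u],[v])$ with a shortest $[v]$-$t$ path produces one admissible $[u]$-$t$ path, whence $\zeta([u], t, \lambda) \le \bar w^\lambda_{[u],[v]} + \zeta([v], t, \lambda)$. Since every node of the LDAG $G_c$ reaches $t$, all $\zeta$ values are finite and this concatenation argument is legitimate even though individual weights may be negative.

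The one case that does not fit this pattern is an edge $(v, t)$ into the sink, for which $w_{v,t} = 0$ and the resource relation above fails (the remaining capacity $\tilde r(v)$ need not equal the resource consumption $0$ of the sink edge). Here I would instead use $h_\lambda(t) = 0$ to write $w_{v,t} + h_\lambda(t) - h_\lambda(v) = \lambda\tilde r(v) - \zeta([v], t, \lambda)$ and bound each term separately: the direct zero-weight edge $([v], t)$ in $G_c^\lambda$ (available because $v$ lies in the last layer) gives $\zeta([v], t, \lambda) \le 0$, while $\lambda \ge 0$ and $\tilde r(v) \ge 0$ give $\lambda\tilde r(v) \ge 0$, so the difference is nonnegative.

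I expect the only genuinely nonroutine steps to be the capacity bookkeeping, namely keeping straight that the remaining-capacity coordinate $\tilde r$ of a node in $G$ decreases along an edge by exactly the resource consumption of the traversed $G_c^\lambda$ edge, and the separate treatment of the sink edges where this correspondence breaks; everything else reduces to the standard optimality inequality for shortest-path distances applied to $\zeta(\cdot, t, \lambda)$ on $G_c^\lambda$.
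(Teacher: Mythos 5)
Your proof is correct and takes essentially the same route as the paper's: after verifying $h_\lambda(t)=0$, you telescope the $-\lambda\tilde r$ terms along an edge, identify $w_{u,v}+\lambda\bigl(\tilde r(u)-\tilde r(v)\bigr)$ with the weight of the corresponding edge in $G_c^\lambda$, and conclude via the standard shortest-path inequality $\zeta([u],t,\lambda)\le \bar w^\lambda_{[u],[v]}+\zeta([v],t,\lambda)$. Your separate treatment of the sink edges $(v,t)$, where the capacity relation $\tilde r(v)=\tilde r(u)-\gamma_{\ell(v)}\vert\tilde d(v)\vert$ breaks down and one instead needs $\lambda\tilde r(v)\ge 0$ together with $\zeta([v],t,\lambda)\le 0$, is actually more careful than the paper's uniform argument, which glosses over this boundary case.
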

\begin{proof}{Proof.}
The remaining capacity in $t$ is $0$ and $\zeta(t,t,0)=0$, implying that $h_\lambda(t)=0$.\\
Let $e=(u,v)$ be an edge in $G$.
To ensure that the heuristic function is consistent we have to show that \[h_\lambda(u) \leq w(u,v)+h_\lambda(v).\]
Inserting the definition of $h_\lambda$ leads to the equivalence
\begin{align*}
  h_\lambda(u) \leq w(u,v)+h_\lambda(v)
\iff    \zeta([u],t,\lambda) - \zeta([v],t,\lambda) \leq w(u,v) + \lambda \vert \tilde d(v)\vert .
\end{align*}
For every $\lambda \geq 0$, the costs of the shortest paths from $[u]$ and $[v]$ to $t$ in the graph $G_c^\lambda$ are $\zeta([u],t,\lambda)$ and $\zeta([v],t,\lambda)$. The weight of the edge from $[u]$ to $[v]$ in $G_c^\lambda$ is $w(u,v) + \lambda \vert \tilde  d(v)\vert$. Because $\zeta([u],t,\lambda)$ and $\zeta([v],t,\lambda)$ are the costs of the shortest paths from $[u]$ and $[v]$ to $t$ in $G_c^\lambda$ respectively, the weight of any edge from $[u]$ to $[v]$ is at least $\zeta([u],t,\lambda)- \zeta([v],t,\lambda)$ (otherwise this would be a contradiction to being the costs of the shortest paths). Using the above equivalences, the consistency of the heuristic function follows.
\end{proof}
Combining the Lagrangian relaxations for multiple values of $\lambda$ has been shown to improve the bounds
(see \cite{dumitrescu2003}). Therefore, we also combine the information of heuristic functions $h_\lambda$
for different values of $\lambda$ into one.
\begin{proposition}\label{prp:final_heuristic}
Let \cref{assump:1} hold. Let $\Lambda \subset [0,\infty)$ with $\vert \Lambda \vert < \infty$ be given.
Then the heuristic function
\begin{align*}
    h_{\LR}(v)=\max_{\lambda \in \Lambda} \LR([v],t,\tilde r(v),\lambda)
\end{align*}
is consistent on $G$.
\end{proposition}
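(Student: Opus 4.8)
The plan is to verify the two defining conditions of a consistent heuristic for $h_{\LR}$ directly, reusing \Cref{theo:heuristic}, which already establishes both conditions for each individual $h_\lambda$. Since consistency requires (i) $h_{\LR}(t) = 0$ and (ii) the edge inequality $w_{u,v} + h_{\LR}(v) - h_{\LR}(u) \ge 0$ for all $(u,v) \in A$, I would treat these two items in turn. The key structural observation is simply that the pointwise maximum of finitely many consistent heuristics is again consistent, so the proof is essentially a bookkeeping argument over the finite index set $\Lambda$.

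For the sink condition, I would note that $h_{\LR}(t) = \max_{\lambda \in \Lambda} h_\lambda(t)$ and that \Cref{theo:heuristic} gives $h_\lambda(t) = 0$ for every $\lambda \ge 0$; hence the maximum over $\lambda \in \Lambda$ is $0$ as well. For the edge inequality, I would fix an arbitrary edge $e = (u,v) \in A$ and exploit that $\Lambda$ is finite and nonempty so that the maximum defining $h_{\LR}(u)$ is attained: there exists $\lambda^* \in \Lambda$ with $h_{\LR}(u) = h_{\lambda^*}(u)$. Applying the consistency of $h_{\lambda^*}$ from \Cref{theo:heuristic} yields $h_{\lambda^*}(u) \le w_{u,v} + h_{\lambda^*}(v)$, and bounding the right-hand side from above by the maximum over $\Lambda$ gives
\[
h_{\LR}(u) = h_{\lambda^*}(u) \le w_{u,v} + h_{\lambda^*}(v) \le w_{u,v} + \max_{\lambda \in \Lambda} h_\lambda(v) = w_{u,v} + h_{\LR}(v),
\]
which is exactly the required triangle inequality.

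There is no genuine obstacle here; the only point that needs a word of care is the attainment of the maximum, which is why the finiteness assumption $|\Lambda| < \infty$ is invoked (it guarantees that a maximizing $\lambda^*$ exists rather than merely a supremum). The argument is asymmetric in $u$ and $v$ on purpose: one selects the optimal multiplier at the \emph{tail} node $u$, uses consistency of that single relaxation across the edge, and then relaxes the \emph{head} value $h_{\lambda^*}(v)$ upward to the full maximum $h_{\LR}(v)$. Because this reasoning uses only \Cref{theo:heuristic} and elementary properties of the maximum, the conclusion that $h_{\LR}$ is consistent on $G$ follows immediately, and in particular $h_{\LR}$ preserves the optimality guarantee of the $A^*$ algorithm while providing bounds at least as tight as any single $h_\lambda$.
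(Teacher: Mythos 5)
Your proof is correct and follows essentially the same route as the paper's: both select a maximizing index $\lambda^*$ (the paper's $i \in \argmax_j h_j(u)$) at the tail node $u$, apply the consistency of that single heuristic across the edge, and then bound $h_{\lambda^*}(v)$ above by the maximum over $\Lambda$. The only cosmetic difference is that you verify the sink condition $h_{\LR}(t)=0$ explicitly, which the paper leaves implicit.
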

\begin{proof}{Proof.}
Let $h_1, \ldots, h_n$ be consistent heuristics on $G$ and $e=(u,v)$ an edge in $G$. Let
$i \in \argmax_{j \in [n]} h_j(u)$. Then
\[ w(u,v) \ge h_i(u)-h_i(v)
          = \max_{j \in [n]} h_j(u) - h_i(v)
          \ge \max_{j \in [n]} h_j(u) - \max_{k \in [n]} h_k(v)
\]
holds, which proves that $h = \max_{j \in [n]} h_j$ is a consistent heuristic. Thus $h_{\LR}$ is consistent
because \cref{theo:heuristic} showed the consistency of the $h_\lambda$.
\end{proof}

\subsection{Preprocessing Stage}\label{sec:preprocessing}
We show that the calculation of the shortest paths in the relaxed graphs and the determination of an $\varepsilon$-optimal
Lagrange multiplier may take place in a preprocessing stage. To this end, we use the equivalence of feasible and optimal
points for \eqref{eq:ip} and feasible and shortest paths for the corresponding SPPs and RCSPPs as is argued
in \cref{prp:sol_of_ip_is_sol_of_spp,prp:sol_of_spp_is_sol_of_rcspp}.
We use a binary search with the initial upper bound
\[ u \coloneqq c_{\max}+2 \alpha \quad \text{with} \quad  c_{\max} \coloneqq \max_{i \in [N]} \vert c_i \vert\]
and the initial lower bound $\ell = 0$, which is given in \cref{alg:bin}.
\begin{algorithm}
	\caption{Binary Search}\label{alg:bin}
	\hspace*{\algorithmicindent}
	\textbf{Input:} $G_c(V_c,A_c)$, source $s$, sink $t$, weights $w: A_c \to \mathbb{R}_{\geq0}$, $u=\vert \vert c \vert \vert_\infty+2 \alpha$, $\ell=0$, $\varepsilon>0$\\
    \hspace*{\algorithmicindent} \textbf{Output:} all shortest paths to $t$ for all calculated values of $\lambda$ 
	\begin{algorithmic}[1]
	    \While{$u-\ell \geq \varepsilon$}
	        \State{$\lambda \leftarrow \frac{u-\ell}{2}$}
	        \State {$\tilde w(u,v) \leftarrow w(u,v) + \lambda |\tilde{d}(v)|$ for all edges $e=(u,v)$ (see \S\ref{sec:spp_reformulation}-\S\ref{sec:rcspp_formulation})	        }\label{ln:lagrangian_relaxation_added}
	        \State{$P_\lambda \leftarrow$ calculate the shortest paths from all nodes $v$ to $t$ for $G_c$ with the weights $\tilde w$}
	        \State{$p_\lambda \leftarrow$ shortest s-t path in $P_\lambda$
	        that minimizes the resource consumption}
	        \If{resource consumption of $p_\lambda$ from $s$ to $t$ exceeds $\Delta$}\label{ln:resource_consumption_check}
	            \State{$\ell \leftarrow \lambda$}
	        \Else
	            \If{resource consumption of $p_\lambda$ equals $\Delta$}
	                \State{\textbf{Terminate early, $p_\lambda$ is optimal for $G$.}}
	           \EndIf
	           \State{$u \leftarrow \lambda$}
	        \EndIf
	    \EndWhile
	\State{\textbf{return} shortest paths to $t$ and the corresponding $\lambda$}
	\end{algorithmic}
\end{algorithm}
In each iteration of \cref{alg:bin}, an \emph{all shortest paths problem} to $t$ is solved. Due to the LDAG structure,
the latter can be solved with the algorithm detailed in \cite[page 655 ff.]{cormen2009algorithm}. If several paths have the
same cost, the path with the lesser resource consumption is chosen. Remaining ties are broken arbitrarily. Because a
duality gap may prevail, none of the calculated shortest $s$-$t$ paths are necessarily optimal for the SPP on $G$. If,
however, the remaining capacity for one of the calculated shortest paths is $0$, the path is already optimal for $G$,
which we show below in \cref{prop:welldefined}. Afterwards, we show in \cref{prop:binsearch} that the binary search
is well-defined meaning that
it terminates after finitely many steps, and returns a Lagrange multiplier that is within an $\varepsilon$-distance of
an optimal one.

Because the remaining capacity of a path is given by the difference
between the sum of $|\tilde{d}(v)|$ over all nodes $v$ in the path
and the input $\Delta$, the Lagrangian relaxation of the resource
constraint can be integrated into the weight function by adding the term
$\lambda |\tilde{d}(v)|$ when an edge that points to the node $v$ is added 
to the graph. This is done in \cref{alg:bin}
\cref{ln:lagrangian_relaxation_added} and gives a one-to-one relation
to the Lagrangian relaxation term
$\lambda \left(\sum_{i=1}^N |d_i| - \Delta\right)$
in terms of the problem formulation \eqref{eq:ip}.
\begin{proposition}\label{prop:welldefined}
Let \cref{assump:1} hold.
Let $d^*$ be an optimal solution of the relaxed problem
\begin{gather}
  \begin{aligned}
    \min_{d}\ & \sum_{i=1}^N c_i d_i +
    \alpha  \sum_{i=1}^{N-1} |x_{i+1} + d_{i+1} - x_i - d_i|
    + \lambda \left(\sum_{i=1}^N \gamma_i |d_i| - \Delta\right)
    \eqqcolon C_\lambda(d)\\
    \emph{s.t.}\  & x_i + d_i \in \Xi \text{ for all } i \in [N]
\end{aligned}
\label{eq:relax}
\tag{$\dagger_\lambda$}
\end{gather}
for a fixed $\lambda \geq 0$. If $\sum_{i=1}^N \gamma_i |d^*_i| - \Delta = 0$,
then $d^*$ is optimal for \eqref{eq:ip}.
\label{theorem:optlagr}
\end{proposition}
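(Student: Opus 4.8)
The plan is to run the standard Lagrangian complementary-slackness argument. The relaxed problem \eqref{eq:relax} is obtained from \eqref{eq:ip} by dropping the resource capacity constraint $\sum_{i=1}^N \gamma_i |d_i| \le \Delta$ and penalizing its violation through the nonnegative multiplier $\lambda$. Hence the feasible set of \eqref{eq:ip} is contained in the feasible set of \eqref{eq:relax}, so optimality of $d^*$ over the larger set will carry over to the smaller one; and the hypothesis $\sum_{i=1}^N \gamma_i |d_i^*| - \Delta = 0$ makes the penalty term vanish at $d^*$, giving $C_\lambda(d^*) = C(d^*)$.

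First I would check that $d^*$ is feasible for \eqref{eq:ip}. The integrality constraint $x_i + d_i^* \in \Xi$ holds because $d^*$ is feasible for \eqref{eq:relax}, which imposes exactly this constraint. The resource capacity constraint holds because the hypothesis yields $\sum_{i=1}^N \gamma_i |d_i^*| = \Delta \le \Delta$.

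Next I would take an arbitrary $d$ that is feasible for \eqref{eq:ip}. Such a $d$ satisfies the integrality constraint and is therefore feasible for \eqref{eq:relax}, so optimality of $d^*$ for \eqref{eq:relax} gives $C_\lambda(d^*) \le C_\lambda(d)$. Because $d$ obeys $\sum_{i=1}^N \gamma_i |d_i| \le \Delta$ and $\lambda \ge 0$, the penalty term at $d$ is nonpositive, $\lambda(\sum_{i=1}^N \gamma_i |d_i| - \Delta) \le 0$, and hence $C_\lambda(d) \le C(d)$. Chaining these with $C_\lambda(d^*) = C(d^*)$ yields $C(d^*) = C_\lambda(d^*) \le C_\lambda(d) \le C(d)$, which shows that $d^*$ is optimal for \eqref{eq:ip}.

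I do not expect a genuine obstacle here, as this is essentially weak duality together with complementary slackness. The only point that requires care is the direction of the inequality induced by the sign of the multiplier: the nonnegativity of $\lambda$ is precisely what forces the penalty term to be nonpositive on points that satisfy the original resource constraint, which is what allows the relaxed optimum $C_\lambda(d^*)$ to dominate $C(d)$ for every feasible point $d$ of \eqref{eq:ip}.
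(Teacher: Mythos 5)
Your proof is correct and follows essentially the same route as the paper: feasibility of $d^*$ for \eqref{eq:ip} from the complementarity hypothesis, weak duality via $C_\lambda(d) \le C(d)$ for \eqref{eq:ip}-feasible $d$, and the chain $C(d^*) = C_\lambda(d^*) \le C_\lambda(d) \le C(d)$. The paper's proof is just a terser version of exactly this argument.
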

\begin{proof}{Proof.}
Let $\lambda \geq 0$, then
every feasible point $d$ of \eqref{eq:ip} satisfies
$C_\lambda(d) \leq C(d)$.
Let $d^*$ be an optimal solution of \eqref{eq:relax} with
$\sum_{i=1}^N \gamma_i |d^*_i| - \Delta = 0$, then $d^*$ is also feasible for \eqref{eq:ip} and the optimality follows from $C(d^*)=C_\lambda(d^*) \leq C_\lambda(d) \leq C(d)$. 
\end{proof}
If such a vector $d$ as in the claim of \cref{prop:welldefined} / an optimal
path for the RCSPP reformulation as is found, then the binary search may 
terminate early and the $A^*$ algorithm or any other solution algorithm
may be skipped entirely.
In order to prove that the binary search terminates after finitely many
steps and returns an optimal Lagrange multiplier, we need two auxiliary
lemmas, which are stated and proven below.
\begin{lemma}\label{lemma:bin1}
Let \cref{assump:1} hold. Let $\lambda \geq c_{\max}+2\alpha$.
Then $d \equiv 0$ is an optimal solution of \eqref{eq:relax}.
\end{lemma}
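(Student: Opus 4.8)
The plan is to show directly that $C_\lambda(0) \le C_\lambda(d)$ for every feasible $d$, i.e.\ that perturbing away from the zero step can only increase the relaxed objective once $\lambda$ is large enough. First I would record that $C_\lambda(0) = \alpha \sum_{i=1}^{N-1} |x_{i+1} - x_i| - \lambda \Delta$, and then examine the difference
\[
C_\lambda(d) - C_\lambda(0) = \sum_{i=1}^N c_i d_i + \alpha \sum_{i=1}^{N-1}\big(|x_{i+1}+d_{i+1}-x_i-d_i| - |x_{i+1}-x_i|\big) + \lambda \sum_{i=1}^N \gamma_i |d_i|,
\]
aiming to bound the first two (possibly negative) terms from below by a negative multiple of $\sum_i |d_i|$ that is dominated by the penalty term.

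The three terms would be handled separately. For the linear term I would use $\sum_i c_i d_i \ge -\sum_i |c_i|\,|d_i| \ge -c_{\max} \sum_i |d_i|$. For the total variation difference, the reverse triangle inequality gives $|x_{i+1}+d_{i+1}-x_i-d_i| - |x_{i+1}-x_i| \ge -|d_{i+1}-d_i|$, and then $|d_{i+1}-d_i| \le |d_{i+1}| + |d_i|$; summing over $i \in [N-1]$ each $|d_i|$ is counted at most twice, so the whole TV difference is at least $-2\alpha \sum_i |d_i|$. For the penalty term I would invoke $\gamma_i \ge 1$ (which holds since $\gamma \in \N^N$ under \cref{assump:1}) to get $\lambda \sum_i \gamma_i |d_i| \ge \lambda \sum_i |d_i|$.

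Combining the three estimates yields
\[
C_\lambda(d) - C_\lambda(0) \ge (\lambda - c_{\max} - 2\alpha) \sum_{i=1}^N |d_i|,
\]
which is nonnegative precisely because $\lambda \ge c_{\max} + 2\alpha$. Hence $d \equiv 0$ is optimal for \eqref{eq:relax}. The main obstacle — and the reason the threshold carries the factor $2\alpha$ rather than $\alpha$ — is the bookkeeping in the TV estimate: one must verify that after applying the reverse triangle inequality and splitting $|d_{i+1}-d_i|$, each coordinate $|d_i|$ contributes at most twice to the summed bound, which is what produces the sharp constant $2\alpha$ matching the hypothesis. Everything else reduces to elementary inequalities.
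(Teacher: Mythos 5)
Your proposal is correct and takes essentially the same route as the paper: the same reverse triangle inequality $|x_{i+1}+d_{i+1}-x_i-d_i| \ge |x_{i+1}-x_i| - (|d_{i+1}|+|d_i|)$ on the total-variation term, the same bound $\sum_{i} c_i d_i \ge -c_{\max}\sum_i |d_i|$ on the linear term, and the same appeal to $\gamma_i \ge 1$ so that the penalty $\lambda \sum_i \gamma_i |d_i|$ dominates, with your counting argument (each $|d_i|$ appearing at most twice across the TV sum) producing exactly the paper's constant $2\alpha$. The only difference is presentational: you verify $C_\lambda(d) \ge C_\lambda(0)$ directly for every feasible $d$, whereas the paper bounds the objective value of an optimal solution $d^*$ from below by $C_\lambda(0)+\lambda\Delta$ — the underlying computation is identical.
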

\begin{proof}{Proof.}
Let $d^*$ be an optimal solution of \eqref{eq:relax}. For the cost of $d^*$
we observe
\[ C_\lambda(d^*) + \lambda \Delta
= \sum_{i=1}^N c_i d^*_i 
+ \alpha  \sum_{i=1}^{N-1}|x_{i+1} + d^*_{i+1} - x_i - d^*_i|
+ \lambda \sum_{i=1}^N \gamma_i |d^*_i|.
\]
The second term satisfies the inequalities
\begin{gather}\label{eq:bin1}
|x_{i+1}-x_i+d^*_{i+1}-d^*_i|
\ge \big||x_{i+1} - x_i| - |d_{i+1} - d_i|\big| 
\ge |x_{i+1} - x_i| - (|d_{i+1}| + |d_i|).
\end{gather}
The first term satisfies the inequalities
\begin{gather}\label{eq:bin2}
\sum_{i=1}^N c_i d_i^* + c_{\max} \sum_{i=1}^N |d_i^*| 
\ge - \sum_{i=1}^N |c_i d_i^*| +  \sum_{i=1}^N c_{\max} |d_i^*|
\ge \sum_{i=1}^N (c_{\max} - |c_i|)|d_i^*|
\ge 0.
\end{gather}
it follows that
\begin{align*} 
\hspace{4.5em}& \hspace{-4.5em}
\sum_{i=1}^N c_i d^*_i 
+ \alpha\sum_{i=1}^{N-1}|x_{i+1} + d^*_{i+1} - x_i - d^*_i|
+ \lambda \sum_{i=1}^N |d^*_i| \\
\stackrel{\mathclap{\eqref{eq:bin1}}}{\geq} &
\sum_{i=1}^N c_i d^*_i
+ \alpha  \sum_{i=1}^{N-1} \left(|x_{i+1} - x_i| - (|d^*_{i+1}| + |d^*_i|)\right)
+ (c_{\max} + 2\alpha) \sum_{i=1}^N \gamma_i |d^*_i| \\
\stackrel{\mathclap{\eqref{eq:bin2}}}{\geq} 
& \alpha  \sum_{i=1}^{N-1} |x_{i+1} - x_i|
=C_\lambda(0)+\lambda \Delta,
\end{align*}
where we have also used $\gamma \in \N^N$ for the second inequality.
Therefore, $d \equiv 0$ is optimal.
\end{proof}

\begin{lemma}\label{lemma:bin2}
Let \cref{assump:1} hold. Let $\lambda \ge 0$ be fixed. 
Let $d^*$ be a minimizer of $d \mapsto \sum_{i=1}^N \gamma_i |d_i|$
over the set of optimal solutions of \eqref{eq:relax}. Let
$\lambda^*$ be an optimal solution of
\begin{gather}\label{eq:opt_lagrange_mult}
\begin{aligned}
    \argmax_{\hat{\lambda} \geq 0} \min_{d}\ & \sum_{i=1}^N c_i d_i + \alpha  \sum_{i=1}^{N-1} \vert x_{i+1} + d_{i+1} - x_i - d_i \vert + \hat{\lambda} \left (\sum_{i=1}^N \gamma_i |d_i| - \Delta \right )\\
    \emph{s.t.}\  &x_i + d_i \in \Xi \text{ for all } i \in [N].
\end{aligned}
\end{gather}
\begin{enumerate}[label=(\roman*)]
\item\label{itm:strictly_above_lambda} If $\sum_{i=1}^N \gamma_i|d^*_i| - \Delta > 0$
holds, then the inequality $\lambda < \lambda^*$ holds.
\item\label{itm:less_or_equal_lambda} If $ \sum_{i=1}^N \gamma_i|d^*_i| - \Delta < 0$
holds, then the inequality $\lambda \ge \lambda^*$ holds.
\end{enumerate}
\end{lemma}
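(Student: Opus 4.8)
The plan is to read \eqref{eq:opt_lagrange_mult} as the maximization over $\hat\lambda \ge 0$ of the dual function $\phi(\hat\lambda) \coloneqq \min\{C(d) + \hat\lambda\, g(d) : x_i + d_i \in \Xi \text{ for all } i \in [N]\}$, where I abbreviate $g(d) \coloneqq \sum_{i=1}^N \gamma_i |d_i| - \Delta$ and $C(d)$ is the (unpenalized) objective of \eqref{eq:ip}, so that the objective of \eqref{eq:relax} is exactly $C_{\hat\lambda}(d) = C(d) + \hat\lambda\, g(d)$. For each fixed feasible $d$, the map $\hat\lambda \mapsto C(d) + \hat\lambda\, g(d)$ is affine with slope $g(d)$. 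Since the feasible set $(\Xi - x_1) \times \cdots \times (\Xi - x_N)$ is finite, $\phi$ is a pointwise minimum of finitely many affine functions, hence concave and piecewise linear; a maximizer $\lambda^*$ exists because the problem is nonempty (e.g.\ $d \equiv 0$ is feasible), as already noted in \S\ref{sec:lagrangian_relaxation}.

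The crucial step is to identify $g(d^*)$ with a one-sided derivative of $\phi$ at the fixed $\lambda$. For \emph{any} optimal solution $d$ of \eqref{eq:relax} one has $\phi(\lambda) = C_\lambda(d) = C(d) + \lambda\, g(d)$, while $\phi(\mu) \le C_\mu(d) = C(d) + \mu\, g(d)$ for every $\mu \ge 0$; subtracting gives $\phi(\mu) \le \phi(\lambda) + (\mu - \lambda)\, g(d)$, i.e.\ $g(d)$ is a supergradient of $\phi$ at $\lambda$. Because $d^*$ is chosen to minimize $\sum_{i=1}^N \gamma_i |d_i|$, equivalently $g$, over the optimal set of \eqref{eq:relax}, the value $g(d^*)$ is the \emph{smallest} supergradient of $\phi$ at $\lambda$, which for a concave function is precisely its right derivative $\phi'_+(\lambda)$.

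From here the two assertions follow from the monotonicity of the right derivative of a concave function (the map $\hat\lambda \mapsto \phi'_+(\hat\lambda)$ is non-increasing). For \ref{itm:strictly_above_lambda}, the hypothesis $g(d^*) > 0$ yields $\phi'_+(\lambda) > 0$, hence $\phi'_+(\mu) \ge \phi'_+(\lambda) > 0$ for all $\mu \le \lambda$; thus $\phi$ is strictly increasing on $[0,\lambda]$ and still increasing immediately to the right of $\lambda$, so no maximizer lies in $[0,\lambda]$ and therefore $\lambda < \lambda^*$. For \ref{itm:less_or_equal_lambda}, the hypothesis $g(d^*) < 0$ yields $\phi'_+(\lambda) < 0$, hence $\phi'_+(\mu) \le \phi'_+(\lambda) < 0$ for all $\mu \ge \lambda$; thus $\phi$ is strictly decreasing on $[\lambda,\infty)$, no maximizer lies strictly above $\lambda$, and therefore $\lambda \ge \lambda^*$.

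I expect the supergradient-identification step to be the main obstacle: the containment of $g(d)$ in the superdifferential for every optimal $d$ is routine, but pinning $g(d^*)$ down as exactly the \emph{right} derivative (rather than merely some supergradient) is what makes the resource-minimizing selection of $d^*$ essential and is what lets the sign conditions control the location of $\lambda^*$ on the correct side. The remaining care concerns the boundary $\hat\lambda \ge 0$ and the existence of the maximizer, both settled by the concavity and nonemptiness remarks of \S\ref{sec:lagrangian_relaxation}; note that the boundary case $g(d^*) = 0$ is deliberately excluded here, as it is the early-termination situation already covered by \cref{prop:welldefined}.
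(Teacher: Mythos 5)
Your proof is correct, but it takes a genuinely different route from the paper. The paper argues entirely at the level of the objective values $C_{\lambda}(d)$: it fixes $\lambda_2 = \lambda$, records in display \eqref{ineqlam} how $\hat\lambda \mapsto C_{\hat\lambda}(d)$ is strictly increasing, constant, or strictly decreasing according to the sign of $\sum_i \gamma_i|d_i| - \Delta$, and for claim (i) first deduces $\lambda^* \ge \lambda$ from this pointwise monotonicity and then obtains \emph{strictness} by a hands-on perturbation: finiteness of the feasible set $\Xi^N$ yields an $\varepsilon > 0$ with $C_{\lambda}(d^*) + \varepsilon \le C_{\lambda}(d)$ for every resource-feasible $d$ (here the resource-minimality of $d^*$ enters), and the explicit shift $\lambda_3 = \lambda + \varepsilon/(2\Delta)$ strictly increases the dual value, ruling out $\lambda$ as a maximizer; claim (ii) is a one-line comparison. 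You instead package the same facts as convex analysis of the dual function $\phi$: each optimal $d$ of \eqref{eq:relax} furnishes a supergradient $g(d)$, the resource-minimal selection $d^*$ realizes exactly the right derivative $\phi'_+(\lambda)$ (a Danskin-type identification), and the sign of $\phi'_+(\lambda)$ together with monotonicity of the right derivative of a concave function locates all maximizers on the correct side of $\lambda$. The role of the tie-breaking rule for $d^*$ is thereby explained structurally (it selects the smallest active slope) rather than used in an ad hoc $\varepsilon$-$\delta$ estimate, and your argument would survive generalizations where the paper's explicit $\delta = \varepsilon/(2\Delta)$ does not. The one step you should spell out is the identification $g(d^*) = \phi'_+(\lambda)$ itself: that each $g(d)$ is a supergradient does not by itself show the \emph{minimum over active pieces} equals the smallest supergradient; you need that, by finiteness, the inactive affine pieces remain strictly above $\phi$ in a neighborhood of $\lambda$, so that $\phi(\mu) = \phi(\lambda) + (\mu - \lambda)\min\{g(d) : d \text{ optimal at } \lambda\}$ for $\mu$ slightly larger than $\lambda$. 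This local-gap observation is precisely the analogue of the paper's $\varepsilon$-separation argument, and since you have already noted that $\phi$ is a minimum of finitely many affine functions, it is a one-line addition; with it included, your proof is complete.
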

\begin{proof}{Proof.}
Let $0\leq \lambda_1< \lambda_2 <\lambda_3$. The feasible sets of $(\dagger_{\lambda_i})$, $i=1,2,3$, are identical because
the remaining constraint does not depend on $\lambda$. We also observe that any feasible point $d$ of \eqref{eq:relax}
satisfies
\begin{gather}
\left\{
	\begin{aligned}
	&C_{\lambda_1}(d)<C_{\lambda_2}(d)<C_{\lambda_3}(d) \text{ if }
	\sum_{i=1}^N \gamma_i \vert d_i \vert - \Delta> 0,\\
	&C_{\lambda_1}(d)=C_{\lambda_2}(d)=C_{\lambda_3}(d) \text{ if }
	\sum_{i=1}^N  \gamma_i \vert d_i \vert - \Delta = 0, \text{ and}\\
	&C_{\lambda_1}(d)>C_{\lambda_2}(d)>C_{\lambda_3}(d) \text{ if }
	\sum_{i=1}^N  \gamma_i \vert d_i \vert - \Delta< 0.
	\end{aligned}
	\right. 
\label{ineqlam}
\end{gather}
We proceed with $d^*$ as assumed with the choice $\lambda_2 \coloneqq \lambda$,
that is $d^*$ is an optimal solution of
\begin{gather}\label{eq:minimize_sum_norm_over_solution_set}
\begin{aligned}
    \min_d \sum_{i=1}^N \gamma_i|d_i| \ \text{s.t.} \ d \in \left\{ \begin{aligned} \argmin_{\tilde{d}}\quad & C_{\lambda_2}(\tilde{d}) \\
    \text{s.t.}\quad & x_i + \tilde{d}_i \in \Xi \text{ for all } i \in [N].
    \end{aligned} \right.
\end{aligned}
\end{gather}
We prove the claims \ref{itm:strictly_above_lambda}
and \ref{itm:less_or_equal_lambda} separately.

Proof of claim \ref{itm:strictly_above_lambda}: Inserting the assumption 
$\sum_{i=1}^N \gamma_i|d_i^*| > \Delta$ into \eqref{ineqlam} implies 
$C_{\lambda_1}(d^*) < C_{\lambda_2}(d^*)$ for all $\lambda_1 < \lambda_2$,
which proves $\lambda^*\geq \lambda_2$.

Let $\bar d$ be an arbitrary feasible point for $(\dagger_{\lambda_i})$.
If $\sum_{i=1}^N\gamma_i|\bar d_i| \le \Delta$, then $C_{\lambda_{2}}(d^*)<C_{\lambda_{2}}(\bar d)$
because $C_{\lambda_{2}}(d^*) \ge C_{\lambda_{2}}(\bar d)$ would imply
a violation of the assumed optimality of
$d^*$ for \eqref{eq:minimize_sum_norm_over_solution_set}. 
Because the set of feasible points for the relaxed problem \eqref{eq:relax} has finite size $\vert \Xi \vert ^N$,
we can find an $\varepsilon>0$ such that
\[
C_{\lambda_2}(d^*)+\varepsilon \leq C_{\lambda_2}(d)
\]
holds for all feasible $d$ satisfying $\sum_{i=1}^N\gamma_i|d_i| \le \Delta$.

We define $\lambda_{3} \coloneqq \lambda_2 + \delta$ with $\delta \coloneqq \frac{\varepsilon}{2\Delta}$.
Let $d$ be feasible for the relaxed problem $(\dagger_{\lambda_i})$, then 
\[ C_{\lambda_{3}}(d)>C_{\lambda_{2}}(d)\geq C_{\lambda_{2}}(d^*)
\]
if $\sum_{i=1}^N \gamma_i|d_i| > \Delta$ and
\[ C_{\lambda_{3}}(d) = C_{\lambda_{2}}(d) + \delta\left(\sum_{i=1}^N \gamma_i|d_i|
- \Delta\right)
\geq C_{\lambda_{2}}(d)-\delta \Delta = C_{\lambda_{2}}(d)-\frac{\varepsilon}{2\Delta} \Delta > C_{\lambda_{2}}(d) -\varepsilon \geq C_{\lambda_{2}}(d^*)
\]
if  $\sum_{i=1}^N \gamma_i|d_i| \le \Delta$.
It follows that $\lambda_2 \neq \lambda^*$,
which proves $\lambda^*>\lambda_2$.

Proof of claim \ref{itm:less_or_equal_lambda}: Let
$\sum_{i=1}^N \gamma_i|d_i^*| < \Delta$ hold. We obtain $C_{\lambda_3}(d^*)<C_{\lambda_2}(d^*)$ for arbitrary $\lambda_3 > \lambda_2$
from \eqref{ineqlam}, which proves $\lambda^*\leq \lambda_2$.
\end{proof}

\begin{proposition}
Let \cref{assump:1} hold.
Let $G_c(V_c,A_c)$ be constructed as described in \S\ref{sec:rcspp_formulation}.
Then \cref{alg:bin} terminates after finitely many iterations. Moreover,
the returned value of $\lambda$ differs at most by $\varepsilon$ from an optimal Lagrange multiplier. 
\label{prop:binsearch}
\end{proposition}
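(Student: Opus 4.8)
The plan is to exhibit the loop invariant that every optimal Lagrange multiplier $\lambda^*$ of \eqref{eq:opt_lagrange_mult} satisfies $\ell \le \lambda^* \le u$ throughout the execution of \cref{alg:bin}, and to combine this with the bisection of $[\ell,u]$ to obtain both termination and the $\varepsilon$-bound. The analytic substance is already contained in \cref{lemma:bin1,lemma:bin2}; what remains is to connect their hypotheses to the quantities computed inside the loop. The key link is that, by \cref{prop:SppIP,prp:sol_of_spp_is_sol_of_rcspp} applied to the reweighted graph of \cref{ln:lagrangian_relaxation_added}, a shortest $s$-$t$ path there corresponds to an optimal solution of \eqref{eq:relax} (its weight equals $C_\lambda(d)$ up to the additive constant $\lambda\Delta$), and the extra tie-break selecting the path $p_\lambda$ of minimal resource consumption makes it correspond to the resource-minimal optimal solution $d^*$ that appears in \cref{lemma:bin2}; its resource consumption is exactly the quantity tested against $\Delta$ on \cref{ln:resource_consumption_check}.

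For the invariant I would first treat initialization: $0=\ell\le\lambda^*$ is immediate since the outer maximization ranges over $\lambda\ge0$, and $\lambda^*\le u=c_{\max}+2\alpha$ follows because \cref{lemma:bin1} makes $d\equiv0$ (of resource consumption $0<\Delta$) optimal for \eqref{eq:relax} once $\lambda\ge c_{\max}+2\alpha$, whence claim \ref{itm:less_or_equal_lambda} of \cref{lemma:bin2} gives $\lambda\ge\lambda^*$ there. For the induction step I distinguish the branches by the resource consumption of $p_\lambda$: if it exceeds $\Delta$, claim \ref{itm:strictly_above_lambda} yields $\lambda<\lambda^*$ so that the update $\ell\gets\lambda$ preserves $\ell\le\lambda^*$; if it is strictly below $\Delta$, claim \ref{itm:less_or_equal_lambda} yields $\lambda\ge\lambda^*$ so that $u\gets\lambda$ preserves $\lambda^*\le u$. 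Because both conclusions hold for an arbitrary optimal multiplier, the invariant is kept for the whole (possibly degenerate) optimal interval.

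Termination then follows because each regular iteration replaces $[\ell,u]$ by one of its two halves, so after at most $\lceil\log_2((c_{\max}+2\alpha)/\varepsilon)\rceil$ iterations the guard $u-\ell\ge\varepsilon$ fails; the early-exit branch can only stop the loop sooner. At a regular exit the invariant gives $\ell\le\lambda^*\le u$ with $u-\ell<\varepsilon$, and since the last computed $\lambda$ is an endpoint of $[\ell,u]$ we conclude $|\lambda-\lambda^*|\le u-\ell<\varepsilon$, the claimed bound. The early-exit branch fires precisely when the resource consumption of $p_\lambda$ equals $\Delta$; then \cref{prop:welldefined} shows the associated $d^*$ is optimal for \eqref{eq:ip} and the relaxation is tight, so this very $\lambda$ attains the dual maximum and is itself an optimal multiplier, making the bound hold trivially.

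I expect the main obstacle to be the bookkeeping around non-uniqueness of the optimal multiplier: \cref{lemma:bin2} is phrased for a single $\lambda^*$, so I must check that its two conclusions hold uniformly over the entire optimal interval, which is what lets me assert the returned $\lambda$ is within $\varepsilon$ of some optimal multiplier no matter which side of the interval the bisection approaches. The secondary technical point is justifying that the resource-minimal shortest path $p_\lambda$ realizes the resource-minimal optimal $d^*$ of \eqref{eq:relax}, rather than just some optimal $d$, since \cref{lemma:bin2} relies on exactly this minimal selection; this forces me to use the precise form of the reweighting and the path-to-solution correspondence, and to keep track of the constant offset $\lambda\Delta$ relating path weights in the reweighted graph to $C_\lambda$.
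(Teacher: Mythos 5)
Your proposal is correct and follows essentially the same route as the paper's proof: the same loop invariant that every optimal multiplier of \eqref{eq:opt_lagrange_mult} stays in $[\ell,u]$, established inductively with the base case from \cref{lemma:bin1} combined with \cref{lemma:bin2}~\ref{itm:less_or_equal_lambda}, the same three-way case split on the resource consumption of $p_\lambda$ using the resource-minimal tie-break to meet the hypothesis of \cref{lemma:bin2}, the early exit handled via \cref{prop:welldefined}, and the $\varepsilon$-bound from bisection with the returned $\lambda$ being an endpoint of the final interval. Your two flagged obstacles---uniformity of \cref{lemma:bin2} over the whole optimal interval and the identification of $p_\lambda$ with the sum-norm-minimal optimizer of \eqref{eq:minimize_sum_norm_over_solution_set}---are exactly the points the paper's proof also addresses, so nothing is missing.
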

\begin{proof}{Proof.}
The only non-trivial operation in each iteration is a shortest path calculation, which terminates finitely. 
The difference between the upper bound $u$ and the lower bound $\ell$ is halved in each iteration.
It follows that $u-\ell \leq \varepsilon$ holds after a finitely many iterations.
Thus \cref{alg:bin} terminates finitely. 

We recall that there is a one-to-one relation between shortest s-t paths in 
$G_c(V_c,A_c)$ with respect to the weight function $\tilde{w}$ defined in
\cref{alg:bin} \cref{ln:lagrangian_relaxation_added} and solutions
of \eqref{eq:opt_lagrange_mult}.

We prove that any optimal Lagrange multiplier is always contained in $[\ell,u]$ inductively over the iterations
of \cref{alg:bin}. \Cref{lemma:bin1} gives that $d\equiv 0$ is an optimal solution of \eqref{eq:relax} for 
$\lambda=c_{\max}+2\alpha$. Combining this with \cref{lemma:bin2} \ref{itm:less_or_equal_lambda} gives that
optimal Lagrange multipliers lie between $0$ and $c_{\max}+2\alpha$, which proves the base claim for the induction.

For an arbitrary iteration we assume that any optimal Lagrange multiplier is in $[\ell,u]$ at the beginning of the
iteration and prove that this still holds after the updates of the bounds provided that \cref{alg:bin}
in this iteration. After each \emph{all shortest paths} calculation, the resource consumption
of a shortest path $p_\lambda$ with minimal resource consumption is checked in \cref{ln:resource_consumption_check}.
We distinguish three cases with respect to the resource consumption of $p_\lambda$.

If the resource consumption of $p_\lambda$ exceeds $\Delta$,
this corresponds to $\sum_{i=1}^N \gamma_i |d_i^*| > \Delta$
for the corresponding solution $d^*$ of \eqref{eq:relax}. 
Note that by choice of $p_\lambda$, the vector $d^*$
minimizes \eqref{eq:minimize_sum_norm_over_solution_set}.
Thus \cref{lemma:bin2} \ref{itm:strictly_above_lambda} implies that
any optimal Lagrange multiplier is above $\lambda$.
Because the lower bound $\ell$ in \cref{alg:bin} is set to $\lambda$
in this case, we obtain that any optimal Lagrange multiplier is still
in $[\ell,u]$.

If the resource consumption equals $\Delta$, then \cref{alg:bin}
terminates early and the optimality of $\lambda$ follows
from the correspondence of optimal solutions and optimal paths
and \cref{prop:welldefined}.

If the resource consumption is strictly less than $\Delta$,
this corresponds to $\sum_{i=1}^N \gamma_i |d_i^*| < \Delta$
for the corresponding solution $d^*$ of \eqref{eq:relax}.
Note again that by choice of $p_\lambda$, the vector $d^*$
minimizes \eqref{eq:minimize_sum_norm_over_solution_set}.
Thus \cref{lemma:bin2} \ref{itm:less_or_equal_lambda} implies that
any optimal Lagrange multiplier is less than or equal
to $\lambda$. Because the upper bound $u$ in \cref{alg:bin} is set to $\lambda$ in this case, we obtain that any
optimal Lagrange multiplier is still in $[\ell,u]$.

This completes the induction and all optimal Lagrange multipliers lie in 
$[\ell,u]$ in all iterations. If the algorithm does not terminate early
with an optimal Lagrange multiplier, the eventual termination with
$u-\ell \le \varepsilon$ implies that the current multiplier, which is
identical to $u$ or $\ell$ by construction differs from an optimal
Lagrange multiplier by at most $\varepsilon$.
\end{proof}

\subsection{Dominated Paths}\label{sec:dominated}

The reformulation as an RCSPP allows us to use the concept of dominated paths
to reduce the search space further.
\begin{definition}
A $[u]$-$[v]$ path $p$ for two nodes $[u]$ and $[v]$ in $G_c$ is called
dominated if there exists another $[u]$-$[v]$ path $q$ with a lower capacity 
consumption such that the cost of the path $p$ is greater than or equal 
to the cost of the path $q$. 
\end{definition}
The equivalence relation allows us to extend this concept to the SPP on $G$. Let $u$ and $v$ be two nodes of $G$ with $u \sim_G v$ and $\tilde r(u)>\tilde r(v)$, then the node $u$ \emph{dominates} the node $v$ if the cost of the shortest path from $s$ to $u$ is less than or equal to the cost of the shortest path from $s$ to $v$. In the graph $G_c$ this corresponds to two paths from $s$ to the same node $[v]$.

The $A^*$ algorithm with a consistent heuristic function guarantees that by the time a node is expanded the shortest path to the node has already been determined. 

\begin{proposition}
Let $h$ be a consistent heuristic function of the $A^*$ algorithm for the graph $G$. Let $u$ and $v$ satisfy $u \sim_G v$ and $\tilde r(u)>\tilde r(v)$. If the node $u$ dominates the node $v$ and the inequality $h(u) \leq h(v)$  holds, then $u$ is expanded before $v$.
\label{prop:domi}
\end{proposition}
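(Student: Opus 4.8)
The plan is to reduce the assertion to the two standard invariants of $A^*$ under a consistent heuristic, both of which are already available in \S\ref{sec:solution algo}. Write $g(n)$ for the cost of the current best-known $s$-$n$ path maintained by the algorithm and $f(n) = g(n) + h(n)$ for the priority key used for extraction. The first invariant, stated in the paragraph preceding the proposition, is that by the time a node $n$ is expanded its label $g(n)$ already equals the true shortest $s$-$n$ path cost. The second is that $A^*$ with a consistent heuristic coincides with Dijkstra's algorithm on the reweighted edges $\bar w_{u,v} = w_{u,v} + h(v) - h(u)$; since consistency makes these weights nonnegative, the keys of the nodes extracted from the priority queue form a non-decreasing sequence. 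These two facts are exactly what I would invoke, so no part of the $A^*$ machinery has to be re-derived from scratch.

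With the invariants in place I would translate the two hypotheses into a single inequality between the expansion keys of $u$ and $v$. By the definition of domination for nodes of $G$ given just before the proposition, the statement that $u$ dominates $v$ means precisely that the shortest $s$-$u$ path cost does not exceed the shortest $s$-$v$ path cost; once both labels are optimal (which holds at expansion by the first invariant) this reads $g(u) \le g(v)$. Combining this with the assumed heuristic inequality $h(u) \le h(v)$ gives
\[
 f(u) = g(u) + h(u) \le g(v) + h(v) = f(v).
\]
Because nodes leave the queue in non-decreasing order of $f$ and each is expanded carrying its optimal $g$-label, the node $u$ is extracted no later than $v$, which is the content of the claim.

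The one delicate point, and the step I expect to require care, is the boundary case $f(u) = f(v)$: the monotone extraction order by itself does not force $u$ out strictly before $v$, because ties are broken arbitrarily in the $A^*$ variant of \S\ref{sec:solution algo}. I would resolve this either by reading ``expanded before'' as ``extracted no later than,'' which is all the dominance pruning actually needs, or by fixing the tie-break to prefer the larger remaining capacity $\tilde r(\cdot)$; under the latter convention $u$, having $\tilde r(u) > \tilde r(v)$, is extracted strictly first whenever the keys coincide. Either reading yields the proposition, and I would note explicitly that the equivalence-class identity $[u]=[v]$ (which underlies $u \sim_G v$) is what guarantees the two nodes share the same cost-to-go structure, so that the heuristic hypothesis $h(u)\le h(v)$ is compatible with, and in the case of the heuristic $h_{\LR}$ even implied by, $\tilde r(u) > \tilde r(v)$.
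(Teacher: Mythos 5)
Your first half coincides with the paper's argument: you define $g$ and $f = g + h$, translate dominance into $g(u) \le g(v)$ at expansion time, add the hypothesis $h(u) \le h(v)$ to obtain $f(u) \le f(v)$, and break ties by remaining capacity --- which is precisely the convention the paper adopts (``if equality holds, the $A^*$ algorithm expands the node with the higher remaining capacity''). However, there is a genuine gap in the second half. The monotone extraction order and the tie-break only settle the comparison between nodes that are simultaneously present in the priority queue, and neither excludes the scenario in which $v$ is expanded before $u$ has ever been \emph{inserted}. $A^*$, like Dijkstra's algorithm, discovers a node only when one of its predecessors is expanded; if $v$ reaches the front of the queue while $u$ is still undiscovered, the inequality $f(u) \le f(v)$ is invisible to the queue and your capacity tie-break has nothing to act on. Your fallback reading ``extracted no later than'' does not repair this either: in that scenario $v$ is extracted and $u$ strictly later (or never, should the algorithm terminate at $t$ first), so even the weakened claim fails. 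Non-decreasing extraction keys show only that \emph{if} both nodes are extracted and $v$ comes strictly first then $f(v) \le f(u)$, forcing equality of keys --- they do not show that $u$ is extracted at all before $v$.

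This missing case is exactly what the paper's second paragraph handles, and it is the essential content of the proof. One assumes $v$ is expanded before $u$ enters the queue; then $v$ is expanded before the predecessor $u_{pred}$ of $u$ on a shortest $s$-$u$ path is expanded (expanding $u_{pred}$ would insert $u$). From $g(u_{pred}) \le g(u) - w(u_{pred},u)$ and consistency, $h(u_{pred}) \le h(u) + w(u_{pred},u)$, one gets $f(u_{pred}) \le f(u) \le f(v)$, so $v$ can in turn only precede $u_{pred}$ if $v$ is expanded before $u_{pred}$ is inserted; iterating along the predecessor chain forces $v$ to be expanded before the source $s$, a contradiction. Note also that your tie-break convention remains compatible with this induction because the remaining capacity is non-increasing along any path, so $\tilde r(u_{pred}) \ge \tilde r(u) > \tilde r(v)$ at every step of the chain. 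Without this inductive (or an equivalent ``frontier node'') argument, your proof establishes the proposition only under the additional, unjustified assumption that $u$ is already queued when $v$ becomes extractable.
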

\begin{proof}{Proof}
Let $g:V \to \mathbb{R}$ map a node $u$ to the cost of the shortest path from $s$ to $u$ and let $f:V \to \mathbb{R}$ be defined as $f(u) \coloneqq g(u)+h(u)$. In the $A^*$ algorithm the nodes are expanded in increasing order with respect to their values of $f$.
Let $u \sim_G v$ and let $u$ dominate $v$. If both nodes are stored in the priority queue, it follows that $u$ is expanded before $v$ because $f(u)=g(u)+h(u) \leq g(v)+h(v) = f(v)$ holds. The inequality $g(u) \leq g(v)$ follows from the dominance and $h(u) \leq h(v)$ holds due to the assumption. If equality holds, the $A^*$ algorithm expands the node with the higher remaining capacity. Thus $u$ is expanded before $v$ if both nodes are stored in the priority queue.

Therefore, the claim follows if we can exclude the case that $v$ was expanded before $u$ was added to the priority queue. In this case, $v$ was expanded before the predecessor $u_{pred}$ of $u$.
For $u_{pred}$ it follows from the shortest path construction that $g(u_{pred}) \leq g(u)-w(u_{pred},u)$.  
The consistency of the heuristic implies $h(u_{pred}) \leq h(u) + w(u_{pred},u)$. In total, $f(u_{pred}) \leq f(u) \leq f(v)$ follows and $v$ can only be expanded before $u_{pred}$ if $v$ is expanded before $u_{pred}$ is added to the priority queue. By continuing this argumentation inductively $v$ would have to be expanded before $s$ which is not possible. Thus $u$ is expanded before $v$.
\end{proof}
The constructed consistent heuristic function from \cref{prp:final_heuristic}
satisfies the prerequisites of \cref{prop:domi}.
The result ensures that if we expand a node, all dominating nodes have
already been expanded. 
By checking all nodes in the same equivalence class with a higher remaining capacity for dominance, non-promising paths can be discarded early.

This argument can also be applied to each edge in a path because $d_i=0$ is always a feasible choice. Let $u=(\tilde d(u),\tilde r(u),i)$ be a node of the layer $i$ and $v=(\tilde d(v),\tilde r(v),i+1)$ a node of the subsequent layer $i+1$. Then the edge $e=(u,v)$ is not optimal if
\begin{align}
c_{i+1} \tilde d(v)+\alpha \vert x_{i+1}+ \tilde d(v)-x_i- \tilde d(u) \vert - \alpha \vert x_{i+1}-x_i- \tilde d(u) \vert > \alpha \vert \tilde d(v) \vert.
\label{eq:domedge}
\end{align}
The cost of the edge $e$ is $c_{i+1} \tilde d(v)+\alpha \vert x_{i+1}+ \tilde d(v)-x_i- \tilde d(u) \vert$ is compared to the cost of an edge $\tilde e$ from $u$ to a node $(0,\tilde r(u),i+1)$ with cost $\alpha \vert x_{i+1}-x_i- \tilde d(u) \vert$. Because the choice of $d_{i+1}$ can impact the cost of the edges in the next step by no more than  $\alpha \vert \tilde d(v) \vert$, the edge is not optimal if the difference exceeds this value. 

\section{Computational Experiments}\label{sec:computational_experiments}
In order to assess the run times of the proposed graph-based computations, we provide two parameterized
instances of \eqref{eq:p}. We run \cref{alg:slip} on discretizations of them, thereby generating
instances of the problem class \eqref{eq:ip}, which are then solved with 
\begin{enumerate}[leftmargin=2cm]
\item[{\crtcrossreflabel{(Astar)}[itm:astar]}] the $A^*$ algorithm including the Lagrangian-based accelerations described in \S\ref{sec:lagrangian} excluding everything from subsection \S\ref{sec:dominated} but equation $\ref{eq:domedge}$ as the computational cost outweighed the benefits in our experiments,
\item[{\crtcrossreflabel{(TOP)}[itm:top]}] the topological sorting algorithm described in \cite{cormen2009algorithm}, and
\item[{\crtcrossreflabel{(SCIP)}[itm:scip]}] the general purpose IP solver SCIP.
\end{enumerate}
Then we compare the distribution of run times of the three different algorithmic 
solution approaches for solving the generated
instances of \eqref{eq:ip}. Note that it is of course also possible to compare
the run times to Dijkstra's algorithm / the $A^*$ algorithm without the derived
heuristic. However, we have observed very long run times in this case and are
already comparing to two further algorithmic approaches. We have therefore omitted
this additional test case.

The first parameterized IOCP, presented in \S\ref{sec:cex_2nd_elliptic}, is an integer optimal control
problem that is governed by a steady heat equation in one spatial dimension.
We run \cref{alg:slip} on $25$ discretized instances that differ in the choice of the
value for the penalty parameter $\alpha$ (five different choices) and the discretization
constant $N$ (five different choices).

The second parameterized IOCP, presented in \S\ref{sec:cex_signal}, is a generic class of signal reconstruction
problems, which leans on the problem formulation in \cite{kirches2021compactness}. After discretization
it becomes a linear least squares problem with a discrete input variable vector. We assess the performance by
sampling ten of the underlying signal transformation kernels and run \cref{alg:slip} for all of them
for the same choices of the values for $N$ and $\alpha$ as the mixed-integer
PDE-constrained optimization problem, thereby yielding $250$ instances of this problem class.

We present the setup of \cref{alg:slip} in \S\ref{sec:comp_slip_setup}.

\subsection{Integer Control of a Steady Heat Equation}\label{sec:cex_2nd_elliptic}
We consider the class of IOCPs
\begin{gather}\label{eq:iocp_steady}
\begin{aligned}
    \min_{u,x} \quad & \frac{1}{2} \vert \vert u - \bar v \vert \vert_{L^2(-1,1)}^2 + \alpha \TV(x) \\
    \text{s.t.} \quad &  -\varepsilon(t) \frac{d^2 u}{dt^2}(t)=f(t)+x(t) \text{ for a.a.\ } t \in (-1,1),\\
    \quad & u(t)=0 \text{ for } t \in \{-1,1\}, \\
    \quad & x(t) \in \Xi = \{-2, \dots, 23\} \subset \mathbb{Z} \text{ for a.a.\ } t \in (-1,1),
\end{aligned}\tag{SH}
\end{gather}
which is based on \cite[page 112\,ff.]{kouri2012} with the choices $\varepsilon(t)=0.1 \chi_{(-1,0.05)}(t) +10 \chi_{[0.05,)}(t)$,
$f(t)=e^{-(t+0.4)^2}$, and $\bar v(t) = 1$ for all $t \in [-1,1]$. We rewrite the problem in the equivalent reduced form 
\begin{align*}
    \min_{x} \quad & \frac{1}{2} \| S x +u_f-\bar v \vert \vert_{L^2(-1,1)}^2 + \alpha  \TV(x)\\
    \text{s.t.} \quad & x(t) \in \Xi = \{-2, \dots, 23\} \subset \mathbb{Z} \text{ for a.a.\ } t \in (-1,1),
\end{align*}
where the function $S : L^2(-1,1) \to L^2(-1,1)$ denotes the linear solution map of boundary value problem
that constrains \eqref{eq:iocp_steady} for the choice $f = 0$ and $u_f \in L^2(-1,1)$ denotes the solution
of the boundary value problem that constrains \eqref{eq:iocp_steady} for the choice $x = 0$.
With this reformulation and the choice $F(x) \coloneqq \frac{1}{2} \vert \vert u - \bar v \vert \vert_{L^2(-1,1)}^2$
for $x \in L^2(-1,1)$, we obtain the problem formulation \eqref{eq:p}, which gives rise to \cref{alg:slip}
and the corresponding subproblems.

We run \cref{alg:slip} for all combinations
of the parameter values $\alpha \in \{10^{-3}, \dots, 10^{-7}\}$ and uniform discretizations of the domain $(-1,1)$ into
$N \in \{ 512, \ldots, 8192 \}$ intervals. The number of intervals coincides
with the problem size constant $N$ in
the resulting IPs of the form \eqref{eq:ip}.
Because we have a uniform discretization, we obtain $\gamma_i = 1$
for all $i \in [N]$

On each of these 25 discretized instances of \eqref{eq:iocp_steady},
we run our implementation of \cref{alg:slip} with three different initial
iterates $x^0$, thereby giving a total of 75 runs of \cref{alg:slip}. Regarding the initial iterates $x^0$,
we make the following choices:
\begin{enumerate}
\item we compute a solution of the continuous relaxation,
where $\Xi$ is replaced by $\conv \Xi = [-2,23]$ and $\alpha$ is set to zero,
with Scipy's (see \cite{2020SciPy-NMeth}) implementation of limited memory
BFGS with bound constraints (see \cite{liu1989limited})
and round it to the nearest element in $\Xi$ on every interval,
\item we set $x^0(t) = 0$ for all $t \in (-1,1)$, and
\item we compute the arithmetic mean of the two previous choices and round it
to the nearest element in $\Xi$ on every interval.
\end{enumerate}
For each of these instances / runs, we compute the discretization of the least squares term, the PDE, and the
corresponding derivative of $F$ with the help of the open source package Firedrake (see \cite{Rathgeber2016}).
Note that the derivative is computed in Firedrake with the help of so-called adjoint calculus
in a \emph{first-discretize, then-optimize} manner. (see \cite{hinze2008optimization})

\subsection{Generic Signal Reconstruction Problem}\label{sec:cex_signal}
We consider the class of IOCPs
\begin{gather}\label{eq:iocp_signal}
\begin{aligned}
    \min_x \quad & \frac{1}{2} \vert \vert Kx - f \vert \vert_{L^2(0,1)}^2 +  \alpha \TV(x) \\
    \text{s.t.} \quad & x(t) \in \Xi = \{-5, \dots, 5 \} \ \text{ for a.a.\ } t \in (0,1),
\end{aligned}\tag{SR}
\end{gather}
which is already in the form of \eqref{eq:p} with the choice $F(x) \coloneqq \frac{1}{2} \vert \vert Kx - f \vert \vert_{L^2(0,1)}^2$ for $x \in L^2(0,1)$.
In the problem formulation \eqref{eq:iocp_signal} the term
$Kx \coloneqq (k * x)(t) = \int_0^1 k(t-\tau) x(\tau) \dd{\tau} = \int_0^t k(t-\tau) x(\tau) \dd{\tau}$
for $t \in [0,1]$ denotes the convolution of the input control $x$ and a convolution kernel $k$.
We choose the convolution kernel $k$ as a linear combination of $200$ Gaussian kernels
$k(t) = \chi_{[0,\infty)}(t) \sum_{i=1}^{200} b_i \frac{1}{\sqrt{2 \pi} \sigma_i}
\exp\left(\frac{-(t-\mu_i)^2}{2 \sigma_i^2}\right)$ in our numerical experiments.
The coefficients $b_i$, $\mu_i$, and $\sigma_i$ in the Gaussian kernels are sampled from
random distributions, specifically the $c_i$ are drawn from a uniform distribution of values in
[0,1), the $\mu_i$ are drawn from a uniform distribution of values in $[-2,3)$ and the $\sigma_i$
are drawn from an exponential distribution with rate parameter value one.
Furthermore, $f$ is defined as $f(t) \coloneqq 5 \sin(4 \pi t)+10$ for $t \in [0,1]$.

We sample ten kernels $k$ and run \cref{alg:slip} for all combinations of the parameter values
$\alpha \in \{10^{-3}, \dots, 10^{-7}\}$ and
discretizations of the domain $(-1,1)$ into $N \in \{ 512, \ldots, 8192 \}$ intervals.
The number of intervals coincides with the problem size constant $N$ in the resulting IPs of the form \eqref{eq:ip}.
We choose the initial control iterate $x^0(t) = 0$ for all $t \in [0,1]$ for all runs, thereby giving a
total of 250 runs of \cref{alg:slip}.

For each of these instances / runs, we compute a uniform discretization of the least squares term, the operator $K$, and the corresponding derivative of $F$ with the help of a discretization of $[0,1]$ into
$8192$ intervals and approximate the integrals over them with Legendre--Gauss quadrature rules
of fifth order. For choices $N < 8192$ we apply a broadcasting operation of the controls to neighboring intervals
to obtain the corresponding evaluations of $F$ and coefficients in \eqref{eq:ip} with respect to
a smaller number of intervals for the control function ansatz.
We obtain again $\gamma_i = 1$ for all $i \in [N]$ due
to the uniform discretization of the domain $[0,1]$.

\subsection{Computational Setup of \cref{alg:slip}}\label{sec:comp_slip_setup}
For all runs of \cref{alg:slip} we use the reset trust-region radius $\Delta^0 = \frac{1}{8}N$
and the step acceptance ratio $\rho = 0.1$. Regarding the solution of the generated subproblems,
we have implemented both \ref{itm:astar} and \ref{itm:top}
in C++.  For the solution approach \ref{itm:scip} we employ the SCIP Optimization Suite 7.0.3
with the underlying LP solver SoPlex \cite{GamrathEtal2020ZR} to solve the 
IP formulation \eqref{eq:ip}.

For all IOCP instances with $N \in \{512, \ldots, 2048 \}$ we prescribe a 
time limit of 240 seconds for the IP solver but note that almost all instances require only between a fraction of a second
and a single digit number of seconds computing time for global optimality so that the time limit is
only reached for a few cases.

For higher numbers of $N$ the running times of the subproblems are generally 
too long to be able to solve all instances with a meaningful time limit for
the solution approach \ref{itm:scip}. In order to assess the run times for 
\ref{itm:scip} in this case as well we draw $50$ instances of \eqref{eq:ip}
from the pool of all generated subproblems both for $N = 4096$ and $N = 8192$
and solve them with a time limit of one hour each. Moreover, we do the same 
with the $50$ instances of \eqref{eq:ip} for which the solution approach \ref{itm:astar} has the longest run times.

We note that a run of \cref{alg:slip} may return different
results depending on which solution approach is used for
the trust-region subproblems even if all trust-region 
subproblems are solved optimally because the minimizers
need not to be unique and different algorithms may find
different minimizers. In order to be able to compare
the performance of the algorithms properly we record the
trust-region subproblems that are generated when using
\ref{itm:astar} as subproblem solver and pass the resulting
collection of instances of \eqref{eq:ip} to the
solution approaches \ref{itm:top} and \ref{itm:scip}.

A laptop computer with an Intel(R) Core i7(TM) CPU with eight cores
clocked at 2.5\,GHz and 64 GB RAM serves as the computing platform
for all of our experiments.

\section{Results}\label{sec:numresults}
We present the computational results of both IOCPs. We provide run times
for each of the three algorithmic solution approaches \ref{itm:astar}, 
\ref{itm:top}, and \ref{itm:scip}.
The 75 runs of \cref{alg:slip} on instances of 
\eqref{eq:iocp_steady}, see \S\ref{sec:cex_2nd_elliptic},
generate 30440 instances of \eqref{eq:ip} in total.
The 250 runs of \cref{alg:slip} on instances of 
\eqref{eq:iocp_steady}, see \S\ref{sec:cex_signal},
generate 655361 instances of \eqref{eq:ip} in total.
A detailed tabulation including a break down with
respect to the values of $\alpha$ and $N$
can be found in \cref{tbl:number_of_generated_ips}
in \S\ref{sec:tabulated_data}.

We analyze the recorded run times of all solution approaches with respect
to the value $N$ in \S\ref{sec:run_times_wrt_N}.
The run times of \ref{itm:astar} and \ref{itm:top} turn out to
be generally much lower than those of \ref{itm:scip}
and we analyze their run times in more detail with respect to
the product of $N$ and $\Delta$ and the number of nodes
in the graph in \S\ref{sec:run_times_wrt_deltatimesN}.
Finally, we assess the cumulative time required for the subproblem
solves of the runs of \cref{alg:slip} when choosing between
\ref{itm:top} and \ref{itm:astar} as subproblem solver
depending on the value of $\Delta$ in \S\ref{sec:subproblem_solver_choice}.

\subsection{Run Times with respect to $N$}\label{sec:run_times_wrt_N}
All instances of \eqref{eq:ip} are solved in a couple of seconds with the
solution approaches \ref{itm:astar} and \ref{itm:top}. The solution
approach \ref{itm:scip} solves almost all instances within the prescribed
time limits specified in \S\ref{sec:comp_slip_setup}.
The exceptions are one instance with $N = 1024$ (out of 5268),
14 instances with $N = 2048$ (out of 5680), and two instances with
$N = 4096$ (out of 100)
for \eqref{eq:iocp_steady} and
one instance with $N = 8192$ (out of 100) for 
\eqref{eq:iocp_signal}.

For \ref{itm:astar}, the mean run time to solve the \eqref{eq:ip} 
instances generated for \eqref{eq:iocp_steady} increases from 
\num{0.016}\,s to \num{2.1}\,s over the increase of $N$ from
\num{512} to \num{8192}. For \ref{itm:top}, the mean run time 
starts from the lower value \num{0.015}\,s at
$N = 512$, surpasses the mean run time of \ref{itm:astar}
for $N = 1024$ and increases to the higher value \num{4.4}\,s
for $N = 8192$, all other things being equal.
For \ref{itm:scip}, the mean run time of 
increases from \num{0.51}\,s for $N = 512$ to
\num{4.7}\,s for $N = 2048$. For $N = 4096$, the mean run
time of \ref{itm:scip} is \num{210}\,s for
the fifty randomly drawn instances and \num{128}\,s for
the fifty instances, for which \ref{itm:astar} has
the highest run times.
For $N = 8192$, the mean run time of \ref{itm:scip}
is \num{56}\,s for the fifty randomly drawn instances
and \num{598}\,s for the fifty instances, for which
\ref{itm:astar} has the highest run times.

We note that \ref{itm:scip} did not solve two of
the randomly drawn instances for $N = 4096$ within
a one hour time limit, thereby affecting the mean 
significantly (it would be around \num{68}\,s without these
two instances).

We obtain a similar picture for the  \eqref{eq:ip} instances
generated for \eqref{eq:iocp_signal} although with generally
lower run times. For \ref{itm:astar}, the mean run time to solve the
instances generated for \eqref{eq:iocp_steady} increases from 
\num{0.011}\,s to \num{0.23}\,s over the increase of $N$ from
\num{512} to \num{8192}. For \ref{itm:top}, the mean run time 
generated starts from the lower value \num{0.0054}\,s at
$N = 512$, surpasses the mean run time of \ref{itm:astar}
for $N = 2048$ and increases to the higher value \num{0.68}\,s
for $N = 8192$, all other things being equal.
In the same setting, the mean run time of \ref{itm:scip}
increases from \num{0.075}\,s for $N = 512$
to \num{0.72}\,s for $N = 2048$.
For $N = 4096$, the mean run time of \ref{itm:scip}
is \num{5.8}\,s for
the fifty randomly drawn instances and \num{9.8}\,s for
the fifty instances, for which \ref{itm:astar} has
the highest run times.
For $N = 8192$, the mean run time of \ref{itm:scip}
is \num{11}\,s for the 50 randomly drawn instances
and \num{309}\,s for the 50 instances, for which
\ref{itm:astar} has the highest run times.

We illustrate the distribution of the run times for the different solution
approaches and the different values of $N$ with violin plots
in \cref{fig:violinplots}.
A detailed tabulation of the mean run times with a break down for
the different values of $N$ and the different algorithmic solution
approaches for \eqref{eq:ip} can be found in
\cref{tbl:mean_run_time} in \S\ref{sec:tabulated_data}.

\begin{figure}
\hspace{-0.5cm}
\begin{subfigure}{1.0\textwidth}
\centering
\includegraphics[scale = 0.295]{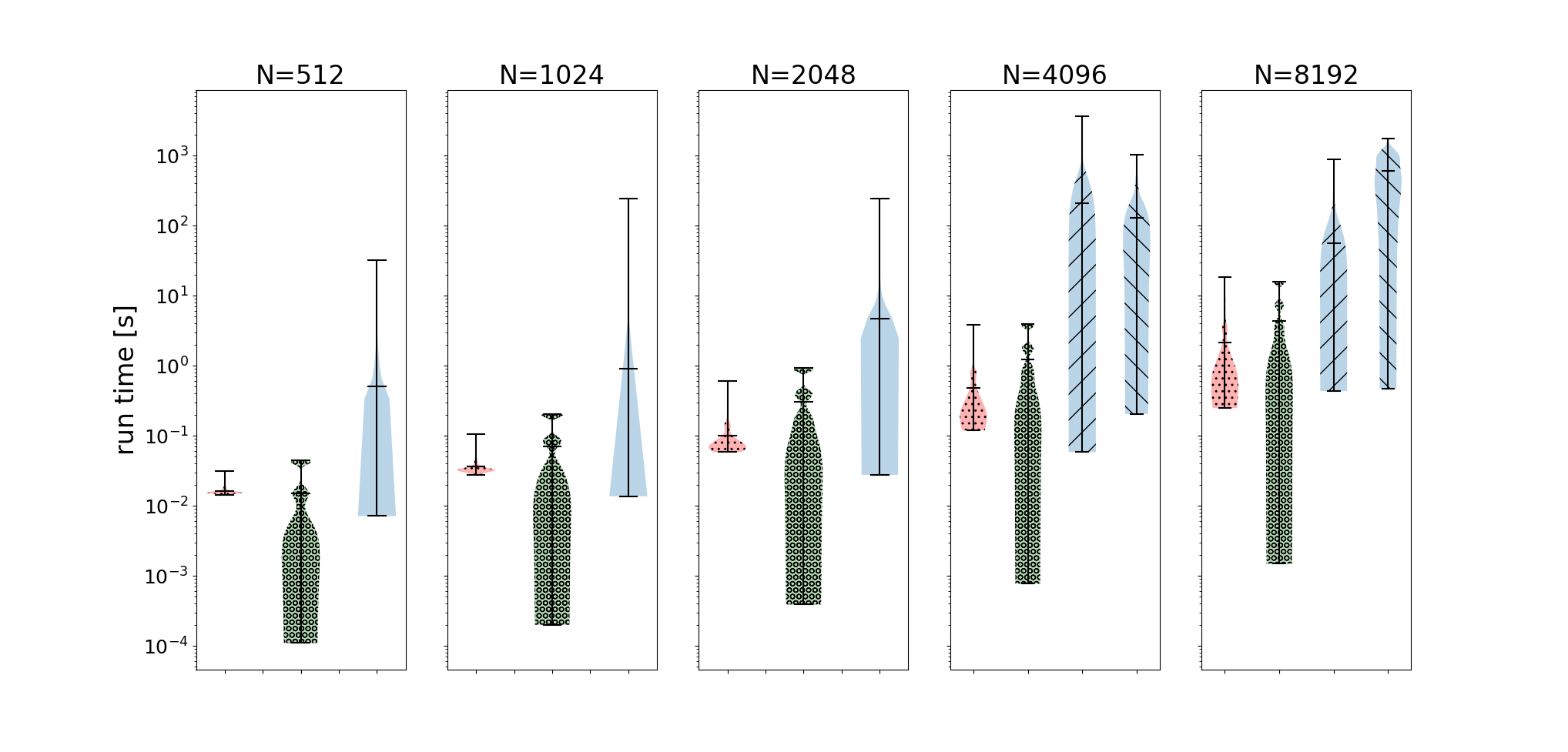}
\end{subfigure}
\begin{subfigure}{1.0\textwidth}
\centering
\includegraphics[scale = 0.295]{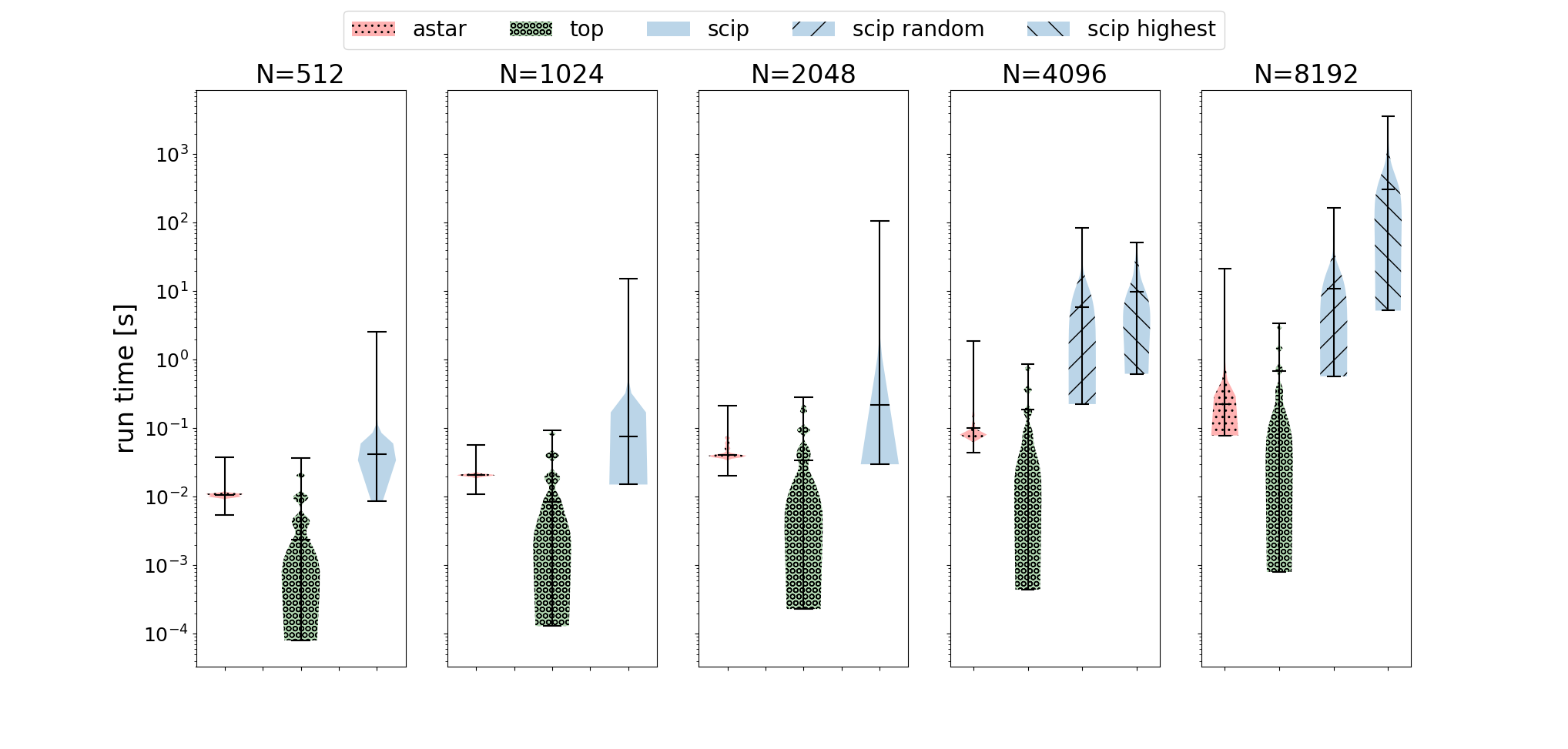}
\end{subfigure}
\caption{Distribution of the run times for the solution of the generated instances
of \eqref{eq:ip} with \ref{itm:astar} (left), \ref{itm:top} (center), and \ref{itm:scip} (right)
as violin plots with range limits and mean marked with black strokes
for the subproblems originating from \eqref{eq:iocp_steady} (top)
and \eqref{eq:iocp_signal} (bottom). For $N \in \{4096,8192\}$ the
results of the solution approach \ref{itm:scip} are split into the
results for the randomly chosen instances ("scip random", center right)
and the instances on which \ref{itm:astar} has the highest run times
("scip worst", right), see \S\ref{sec:comp_slip_setup}.}
\label{fig:violinplots}
\end{figure}

\subsection{Run Times of \ref{itm:top} and \ref{itm:astar}
with respect to $N \Delta$ and the Graph 
Size}\label{sec:run_times_wrt_deltatimesN}
Because $\Xi$ in \eqref{eq:ip} depends only on the superordinate IOCP
and does not vary over a run of \cref{alg:slip}, we consider $\Xi$ as
fixed. Then the complexity of \ref{itm:top} depends linearly on
the number of nodes and edges in the graph and thus the product
of $N$ and the trust-region radius $\Delta$, see \S\ref{sec:solution algo}. 
Moreover, \ref{itm:top} does not have any option to
skip nodes or terminate early. In contrast to this, \ref{itm:astar} can
make use of the heuristic function
and the preprocessing described in \S\ref{sec:heuristic}-\ref{sec:preprocessing}.
Therefore, we assess the run times of \ref{itm:top} and \ref{itm:astar}
with respect to the problem size of \eqref{eq:ip} measured as $N \Delta$.

We observe that the mean run times produced by \ref{itm:top} follow
approximately a linear trend with respect to $N\Delta$ starting from 
very low values at the order of $10^{-4}$\,s for $N \Delta \approx 10^4$
increasing to values at the order of $10^1$\,s for
$N\Delta \approx 2\cdot 10^8$. In contrast to this, the mean run times
of \ref{itm:astar} start at the order of $10^{-2}$\,s
for $N \Delta \approx 10^4$, follow a generally shallower
but (at first impression less linear)
trend and are about an order of magnitude lower
for the highest values of $N\Delta$.
In particular, the run time of \ref{itm:top} starts exceeding
the run time of \ref{itm:astar} between $N\Delta = 10^6$
and $N\Delta = 10^7$ for the subproblems generated with
both IOCP instances \eqref{eq:iocp_steady} and \eqref{eq:iocp_signal}.
The mean run times over the different values of $N\Delta$ are
plotted in \cref{fig:product}.

\begin{figure}[H]
\hspace*{-1.5cm}  \begin{subfigure}{0.56\textwidth}
\centering
\includegraphics[scale = 0.2]{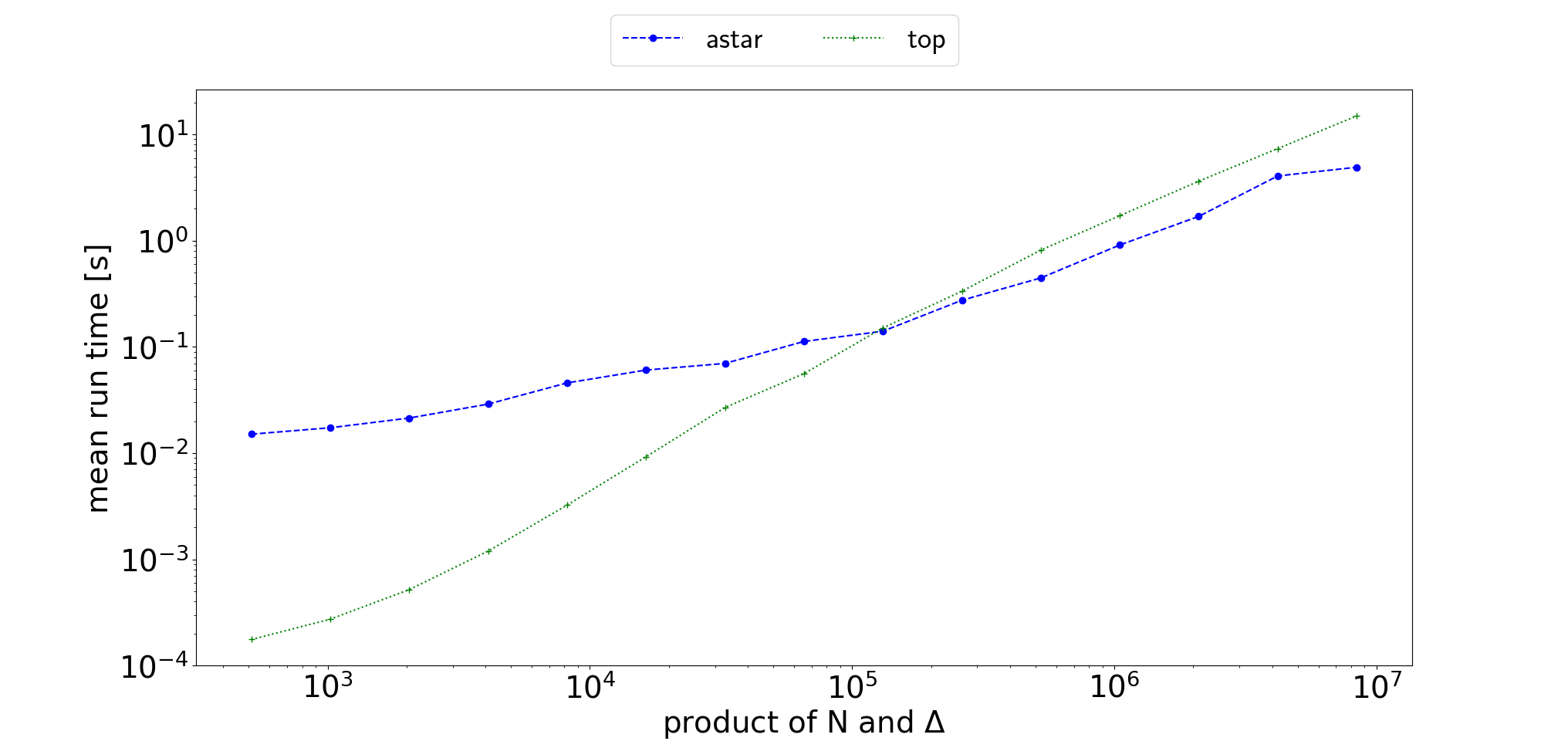}
\end{subfigure}
\begin{subfigure}{0.5\textwidth}
\centering
\includegraphics[scale = 0.2]{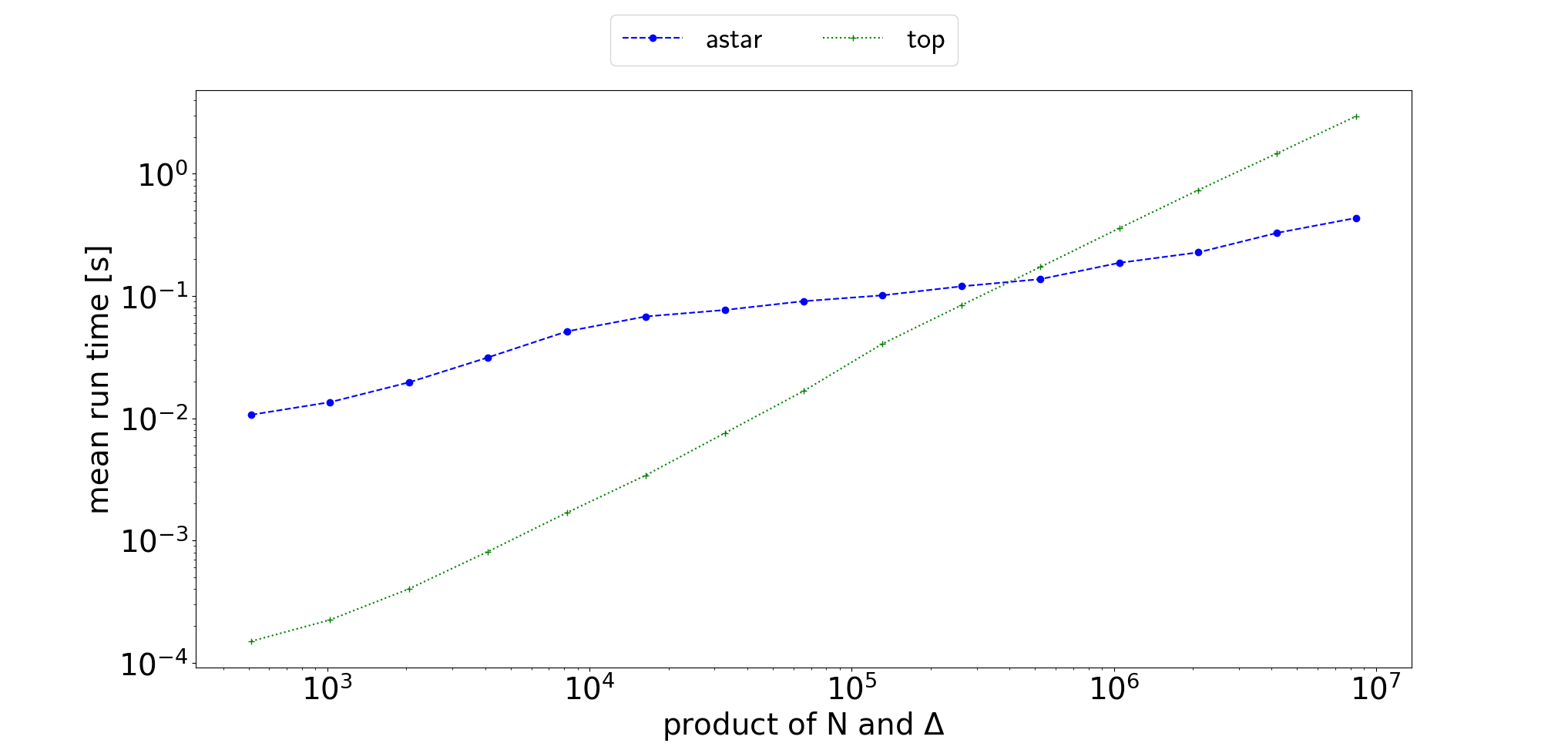}
\end{subfigure}
\caption{Mean run times of \ref{itm:top} (green, + mark, dotted)
and \ref{itm:astar} (blue, circle mark, dashed) over the product of the 
discretization $N$ and the trust-region radius $\Delta$ for
\eqref{eq:ip} instances stemming from
\eqref{eq:iocp_steady}  (left) and \eqref{eq:iocp_signal}
(right).}
\label{fig:product}
\end{figure}

We get a similar picture for the overall trend when considering
the run times of \ref{itm:top} and \ref{itm:astar} with respect
to the number of nodes in the graph. However, the run times do not
increase monotonically over the number of nodes in the graph
and we observe a sequence of spikes in the run times, which is
illustrated in \cref{fig:size}. We attribute the
dominant spikes in the run times of \ref{itm:astar} to the
preprocessing step described in \S\ref{sec:preprocessing}.
The run time of the preprocessing step 
\cref{alg:bin} yields an offset for the run time of
\ref{itm:astar}, which only depends
on $N$ and not on the current trust-region radius $\Delta$.

We also observe a sequence of spikes in the overall run time trend
of \ref{itm:top} with respect to the product $N \Delta$. The locations of
these spikes seem to be opposed to spikes we observe for \ref{itm:astar}.
We attribute these spikes to the fact that if $N$ is relatively
large compared to $\Delta$, then the number of edges in the graph is
relatively small compared to a graph with a similar value of $N\Delta$,
where the ratio $\tfrac{N}{\Delta}$ is smaller.

\begin{figure}[H]
\hspace*{-1.5cm} \begin{subfigure}{0.56\textwidth}
\centering
\includegraphics[scale = 0.2]{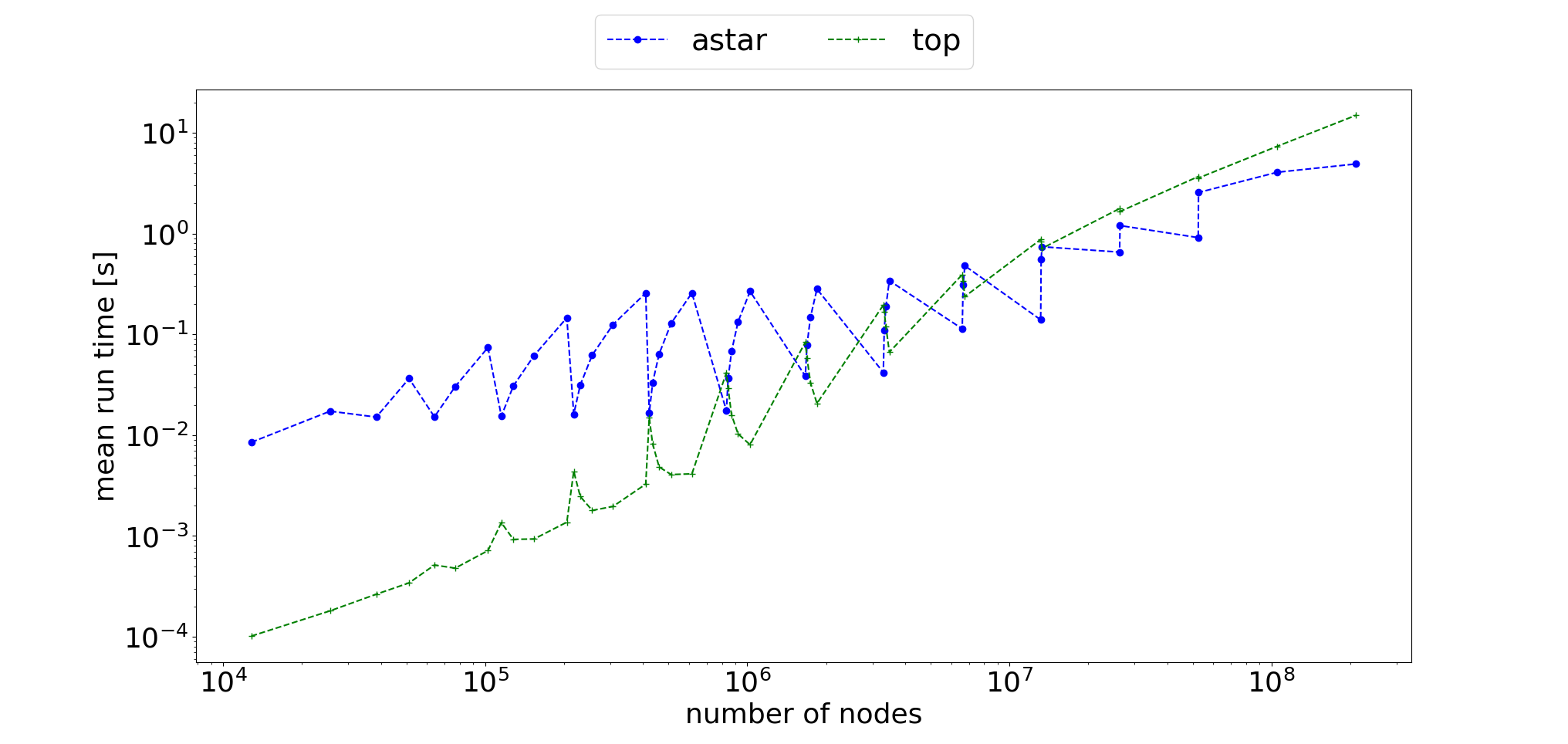}
\end{subfigure}
\begin{subfigure}{0.5\textwidth}
\centering
\includegraphics[scale = 0.2]{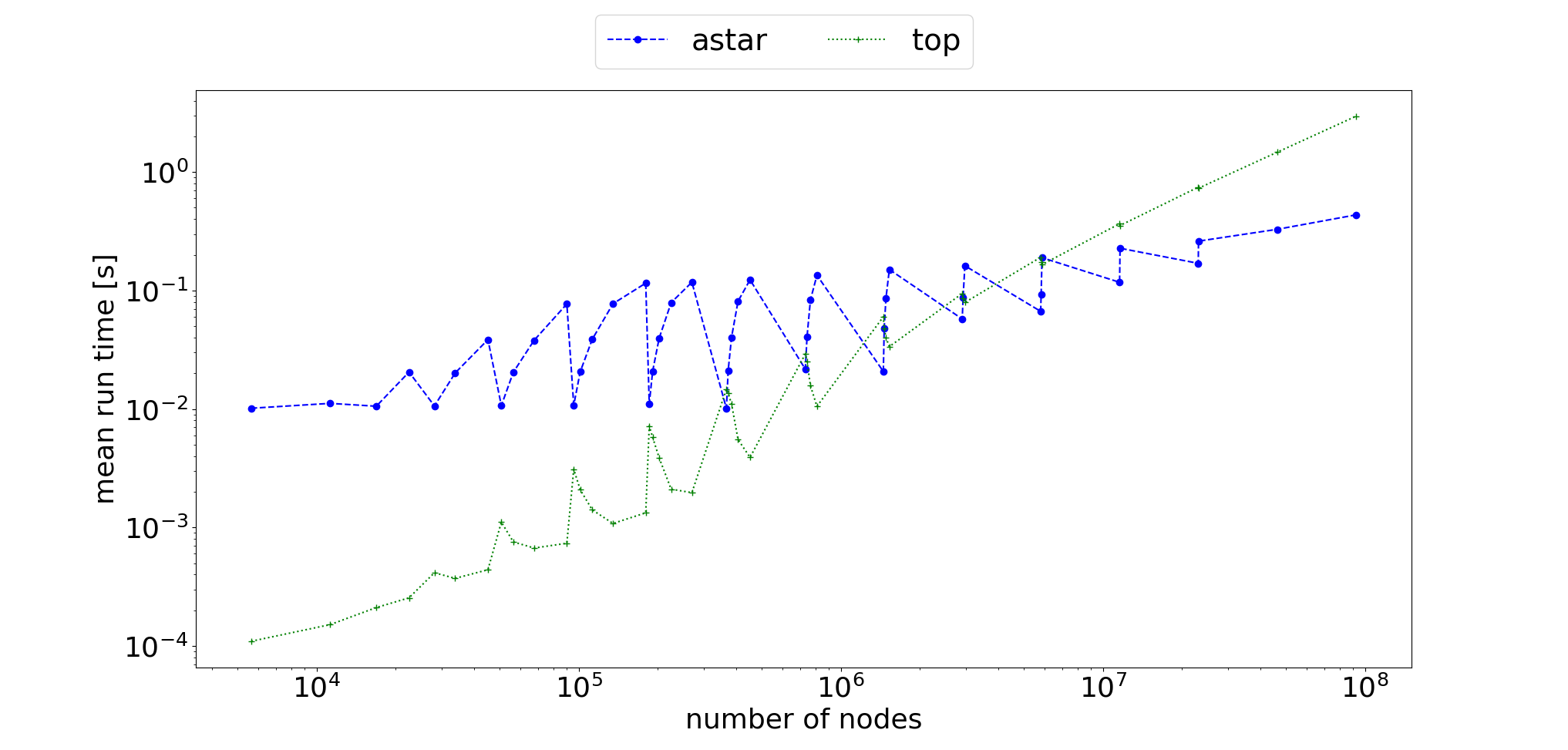}
\end{subfigure}
\caption{Mean run times of \ref{itm:top} (green, + mark, dotted)
and \ref{itm:astar} (blue, circle, dashed) over the number of nodes
in the graphs stemming for \eqref{eq:ip} instances
stemming from \eqref{eq:iocp_steady} (left) and \eqref{eq:iocp_signal} (right).}
\label{fig:size}
\end{figure}

A closer investigation yields that the run times of \ref{itm:astar}
increase less than linearly with respect to $\Delta$. Specifically, 
\ref{itm:astar} expands only a smaller fraction of the nodes in
the graph and larger trust-region radii result in smaller
percentages of the nodes being expanded. In particular
for the instances generated from \eqref{eq:iocp_signal}, 
the average percentage of nodes expanded is less than $0.2 \%$
for the largest trust-region radius $\Delta = \tfrac{1}{8}N$.
We visualize the mean and the 95th percentile of the nodes expanded for
different trust-region radii in \cref{fig:deviation}
in \S\ref{sec:auxiliary_figures}.

Considering the time required for whole runs of \cref{alg:slip},
we observe a substantial decrease when comparing the runs with 
\ref{itm:top} and \ref{itm:astar} as subproblem solver to the same
runs with \ref{itm:scip} as subproblem solver. The strongest
effect can be observed for $N = 2048$, the largest case, where
the necessary data is fully available.
For $N = 2048$ and \eqref{eq:iocp_steady}
we observe a decrease of the cumulative run time
of the runs of \cref{alg:slip} from $30200$\,s with \ref{itm:scip}
to $5242$\,s with \ref{itm:top} and $4083$\,s with \ref{itm:astar}.
Thus the subproblem solves consume $88.4$\,\% of the run time
of \cref{alg:slip} for \ref{itm:scip}, $32.9$\,\% for 
\ref{itm:top} an $13.9$\,\% for \ref{itm:astar}.
For $N = 2048$ and \eqref{eq:iocp_signal}
we observe a decrease of the cumulative run time
of the runs of \cref{alg:slip} from $85696$\,s with \ref{itm:scip}
to $24914$\,s with \ref{itm:top} and $24183$\,s with \ref{itm:astar}.
Thus the subproblem solves consume $76.9$\,\% of the run time
of \cref{alg:slip} for \ref{itm:scip}, $20.5$\,\% for 
\ref{itm:top} an $18.1$\,\% for \ref{itm:astar}. We have tabulated
the cumulative run times of the runs of \cref{alg:slip} excluding
the subproblem solvers over $N$ and $\alpha$
in \cref{tbl:run_time_slip} and the cumulative run times
of the subproblem solves over $N$ and $\alpha$
for \ref{itm:astar} and \ref{itm:top} in
in \cref{tbl:run_time_astar,tbl:run_time_top}.

\subsection{Choosing the Subproblem Solver Depending on $\Delta$}\label{sec:subproblem_solver_choice}
We choose the subproblem solver in \cref{alg:slip} depending on the
value of $\Delta$. Specifically, we choose \ref{itm:top} if $\Delta$
is less than a decision value $\Delta_D$ and \ref{itm:astar} if
$\Delta$ is greater than or equal to $\Delta_D$.
For all values of $N$, we observe that the cumulative run time
of the subproblem solves in \cref{alg:slip} decreases from the
value obtained if only \ref{itm:top} is used if $\Delta_D$
is decreased until $\Delta_D$ is between $64$ and $128$.
Then the cumulative run time increases until it reaches the value
that is obtained if only \ref{itm:astar} is used ($\Delta_D = 0$).
We visualize the cumulative run times
for the subproblem solves of the runs of \cref{alg:slip} for
$N \in \{512,2048,8192\}$
for decreasing values of $\Delta_D$ from $\Delta_D = \tfrac{1}{4}N$
(only \ref{itm:top}) to $\Delta_D = 0$ (only \ref{itm:astar})
\cref{fig:split} in \S\ref{sec:auxiliary_figures}. 

\section{Conclusion}\label{sec:conclusion}
We have derived a reformulation of the problem class \eqref{eq:ip},
which arises as discretized trust-region subproblem in implementations
of \cref{alg:slip}, as an RCSPP on an LDAG and as well as a Lagrangian
relaxation of the (discretized) trust-region constraint.

The reformulation and the Lagrangian relaxation have lead to a highly
efficient $A^*$ algorithm that provides optimal solutions and which
outperforms on average a general purpose IP solver generally and a
topological sorting algorithm on larger problem instances despite
having a worse complexity estimate than topological sorting.
The average run times per subproblem for both topological sorting
and $A^*$ are several orders of magnitude lower than those for
a general purpose IP solver. We also note that we have observed
occasionally that the general purpose IP solver is not able to
solve the IP to global optimality within meaningful time limits.
Using our $A^*$ algorithm or topological sorting allows to overturn
the relationship between the run time of the subproblem solves
and the other operations in \cref{alg:slip},
mainly the computations of $F(x^k)$ and $\nabla F(x^k)$.

We believe that the better performance of $A^*$ can be attributed
to the fact that the preprocessing and heuristic, both derived from the
Lagrangian relaxation, allow to disregard large parts of the underlying
graph while the topological sorting always has to process every node
and edge in the graph, which is backed by our observation that the
percentage of expanded nodes by $A^*$ is particularly low if $N$
is large.
This observation also leads to a potential further improvement of
the run time of \cref{alg:slip} because we may decide on using
topological sorting or $A^*$ as subproblem solver depending
on the current value of $\Delta$.

\section*{Acknowledgment}
The authors are grateful to Christoph Hansknecht
(Technical University of Braunschweig) for helpful
discussions and advice on 
the implementation of the algorithms.
\begin{appendices} 
\section{Tabulated Data of the Numerical Results}\label{sec:tabulated_data}

\begin{table}[H]
\caption{Number of generated IPs of the form \eqref{eq:ip}
by the runs of \cref{alg:slip} on the different discretizations
and parameterizations
of \eqref{eq:iocp_steady} (left)
and \eqref{eq:iocp_signal} (right).}\label{tbl:number_of_generated_ips}
\parbox{.475\linewidth}{
\begin{center}
\small
\begin{tabular}{r|rrrrr|r} 
 \toprule
 \diagbox{$\alpha$}{N} & 512 & 1024 & 2048 & 4096 & 8192 & Cum. \\
 \midrule
 $10^{-7}$ & 2055 & 2960 & 3692 & 4645 & 3878 & 17230 \\ 
 $10^{-6}$ & 1727 & 1291 & 639 & 507 & 1306 & 5470 \\
 $10^{-5}$ & 310 & 124 & 339 & 524 & 1018 & 2315 \\
 $10^{-4}$ & 365 & 296 & 427 & 345 & 986 & 2419 \\
 $10^{-3}$ & 303 & 597 & 583 & 675 & 848 & 3006 \\
 \midrule
 Cum. & 4760 & 5268 & 5680 & 6696 & 8036 & 30440\\
 \bottomrule
\end{tabular}
\end{center}}\hfill
\parbox{.5\linewidth}{
\begin{center}
\small
\begin{tabular}{r|rrrrr|r} 
 \toprule
 \diagbox{$\alpha$}{N} & 512 & 1024 & 2048 & 4096 & 8192 & Cum. \\
 \midrule
 $10^{-7}$ & 5382 & 10026 & 19026 & 35294 & 71696 & 141424\\ 
 $10^{-6}$ & 5142 & 10713 & 18821 & 36514 & 75515 & 146705\\
 $10^{-5}$ & 5119 & 10101 & 19526 & 34843 & 60932 & 130521\\
 $10^{-4}$ & 5199 & 9202 & 17737 & 38816 & 63021 & 133975\\
 $10^{-3}$ & 3899 & 8875 & 16291 & 32812 & 40859 & 102736\\
 \midrule
 Cum. & 24741 & 48917 & 91401 & 178279 & 312023 & 655361\\
 \bottomrule
\end{tabular}
\end{center}}
\end{table}

\begin{table}[H]
\caption{Cumulative run times in seconds of \cref{alg:slip} excluding the run times 
required for the subproblems of the form \eqref{eq:ip} for all instances and start 
values on the different discretizations
and parameterizations
 of \eqref{eq:iocp_steady} (left)
and \eqref{eq:iocp_signal} (right).}\label{tbl:run_time_slip}
\parbox{.475\linewidth}{
\begin{center}
\small
\begin{tabular}{r|rrrrr|r} 
 \toprule
 \diagbox{$\alpha$}{N} & 512 & 1024 & 2048 & 4096 & 8192 & Cum. \\
 \midrule
 $10^{-7}$ & 439 & 1151 & 2147 & 3770 & 4056 & 11563 \\ 
 $10^{-6}$ & 450 & 585 & 456 & 429 & 1299 & 3219 \\
 $10^{-5}$ & 82 & 50 & 219 & 448 & 1066 & 1865 \\
 $10^{-4}$ & 99 & 137 & 288 & 313& 1039 & 1876 \\
 $10^{-3}$ & 91 & 271 & 405 & 598 & 997 & 2362\\
 \midrule
 Cum. & 1161 & 2194 & 3515 & 5558& 8457 & 20885\\
 \bottomrule
\end{tabular}
\end{center}}\hfill
\parbox{.5\linewidth}{
\begin{center}
\small
\begin{tabular}{r|rrrrr|r} 
 \toprule
 \diagbox{$\alpha$}{N} & 512 & 1024 & 2048 & 4096 & 8192 & Cum. \\
 \midrule
 $10^{-7}$ & 1175 & 2206 & 4109 & 7774 & 16455 & 31719\\ 
 $10^{-6}$ & 1135 & 2353 & 4075 & 8075 & 17166 & 32804\\
 $10^{-5}$ & 1127 & 2220 & 4224 & 7744 & 16691 & 32006\\
 $10^{-4}$ & 1143 & 2011 & 3852 & 8569 & 18260 & 33835\\
 $10^{-3}$ & 861 & 1958 & 3536 & 7211 & 11390 & 24956\\
 \midrule
 Cum. & 5441 & 10748& 19796 & 39373 & 79962& 155320\\
 \bottomrule
\end{tabular}
\end{center}}
\end{table}

\begin{table}[H]
\caption{Cumulative run times in seconds of \ref{itm:astar} for the subproblems of the form \eqref{eq:ip} for all instances and start 
values on the different discretizations
and parameterizations
 of \eqref{eq:iocp_steady} (left)
and \eqref{eq:iocp_signal} (right).}\label{tbl:run_time_astar}
\parbox{.475\linewidth}{
\begin{center}
\small
\begin{tabular}{r|rrrrr|r} 
 \toprule
 \diagbox{$\alpha$}{N} & 512 & 1024 & 2048 & 4096 & 8192 & Cum. \\
 \midrule
 $10^{-7}$ & 32 & 97 & 313 & 2073 & 8258 & 10773 \\ 
 $10^{-6}$ & 29 & 52 & 72 & 279 & 2540 & 2972 \\
 $10^{-5}$ & 5 & 4 & 42 & 251 & 2378 & 2680 \\
 $10^{-4}$ & 6 & 12 & 58 & 216& 1857 & 2149 \\
 $10^{-3}$ & 6 & 26 & 84 & 370 & 2019 & 2505\\
 \midrule
 Cum. & 78 & 191 & 569 & 3189& 17052 & 21079\\
 \bottomrule
\end{tabular}
\end{center}}\hfill
\parbox{.5\linewidth}{
\begin{center}
\small
\begin{tabular}{r|rrrrr|r} 
 \toprule
 \diagbox{$\alpha$}{N} & 512 & 1024 & 2048 & 4096 & 8192 & Cum. \\
 \midrule
 $10^{-7}$ & 58 & 210 & 881 & 3373 & 12544 & 17066\\ 
 $10^{-6}$ & 54 & 224 & 885 & 3501 & 13387 & 18051\\
 $10^{-5}$ & 54 & 211 & 926 & 3387 & 14450 & 19028\\
 $10^{-4}$ & 55 & 192 & 875 & 4142 & 24823 & 30087\\
 $10^{-3}$ & 42 & 184 & 801 & 3516 & 14977 & 19520\\
 \midrule
 Cum. & 263 & 1021& 4368 & 17919 & 80181& 103752\\
 \bottomrule
\end{tabular}
\end{center}}
\end{table}

\begin{table}[H]
\caption{Cumulative run times in seconds of \ref{itm:top} for the subproblems of the form \eqref{eq:ip} for all instances and start 
values on the different discretizations
and parameterizations
 of \eqref{eq:iocp_steady} (left)
and \eqref{eq:iocp_signal} (right).}\label{tbl:run_time_top}
\parbox{.475\linewidth}{
\begin{center}
\small
\begin{tabular}{r|rrrrr|r} 
 \toprule
 \diagbox{$\alpha$}{N} & 512 & 1024 & 2048 & 4096 & 8192 & Cum. \\
 \midrule
 $10^{-7}$ & 29 & 201 & 1070 & 5614 & 18046 & 24960\\ 
 $10^{-6}$ & 28 & 100 & 231 & 612 & 4859 & 5830 \\
 $10^{-5}$ & 5 & 7 & 98 & 637 & 4263 & 5010 \\
 $10^{-4}$ & 5 & 23 & 136 & 467& 3949 & 4580\\
 $10^{-3}$ & 5 & 41 & 190 & 836 & 4028 & 5100\\
 \midrule
 Cum. & 72 & 372 & 1725 & 8166 & 35145 & 45480\\
 \bottomrule
\end{tabular}
\end{center}}\hfill
\parbox{.5\linewidth}{
\begin{center}
\small
\begin{tabular}{r|rrrrr|r} 
 \toprule
 \diagbox{$\alpha$}{N} & 512 & 1024 & 2048 & 4096 & 8192 & Cum. \\
 \midrule
 $10^{-7}$ & 28 & 192 & 1061 & 6798 & 49089 & 57168\\ 
 $10^{-6}$ & 27 & 200 & 1067 & 6943 & 50870 & 59107\\
 $10^{-5}$ & 27 & 198 & 1089 & 6654 & 49414 & 57382\\
 $10^{-4}$ & 28 & 177 & 1007 & 7153 & 54746 & 63111\\
 $10^{-3}$ & 23 & 174 & 925 & 6232 & 35243 & 42597\\
 \midrule
 Cum. & 133 & 941& 5149 & 33780 & 239362& 279365\\
 \bottomrule
\end{tabular}
\end{center}}
\end{table}

\begin{table}[H]
\caption{Mean run times in seconds of the solution process for
the IPs of the form \eqref{eq:ip} for different solution
algorithms and problem sizes. The values for the IPs generated
by running \cref{alg:slip} on the different discretizations
and parameterizations of \eqref{eq:iocp_steady} are tabulated
left and those
of \eqref{eq:iocp_signal} are tabulated right. The smallest
mean run time per value of $N$ is written in bold type.
For the algorithm \ref{itm:scip} the mean is computed over
all instances (all) for $N \in \{512,1024,2048\}$
and over 50 randomly drawn instances (random)
as well as the 50 instances with the highest run time
of \ref{itm:astar} (worst) for $N \in \{4096,8192\}$.}\label{tbl:mean_run_time}
\parbox{.475\linewidth}{
\begin{center}
\small
\begin{tabular}{r|lllll} 
 \toprule
 N
 & \ref{itm:astar}
 & \ref{itm:top}
 & \multicolumn{3}{c}{\ref{itm:scip}}\\
 & & &  all & random & worst\\ [0.5ex]
\midrule
512 & 0.016 & \textbf{0.015} &0.511& - & - \\
1024& \textbf{0.037} & 0.071 &0.901 & - & -\\
2048 & \textbf{0.100} & 0.304 & 4.698 & - & -\\
4096 & \textbf{0.476} & 1.220 & - & 209.751 & 128.430 \\
8192 & \textbf{2.122} & 4.374 & - & 56.023 & 597.785 \\
 \bottomrule
\end{tabular}
\end{center}}\hfill
\parbox{.5\linewidth}{
\begin{center}
\small
\begin{tabular}{r|lllll} 
 \toprule
 N
 & \ref{itm:astar}
 & \ref{itm:top}
 & \multicolumn{3}{c}{\ref{itm:scip}}\\
 & & &  all & random & worst\\ [0.5ex]
\midrule
512 & 0.011 & \textbf{0.005} & 0.075 & - & - \\
1024 & 0.021 & \textbf{0.019} & 0.223 & - & -\\
2048 & \textbf{0.048} & 0.056 & 0.721 & - & -\\
4096 & \textbf{0.101} & 0.190 & - & 5.822 & 9.807 \\
8192 & \textbf{0.227} & 0.677 & - & 10.925 & 309.499 \\
\bottomrule
\end{tabular}
\end{center}}
\end{table}

\section{Auxiliary Figures Illustrating the Numerical Results}\label{sec:auxiliary_figures}
\end{appendices}

\begin{figure}[H]
\hspace*{-1cm} \begin{subfigure}{0.55\textwidth}
\centering
\includegraphics[scale = 0.18]{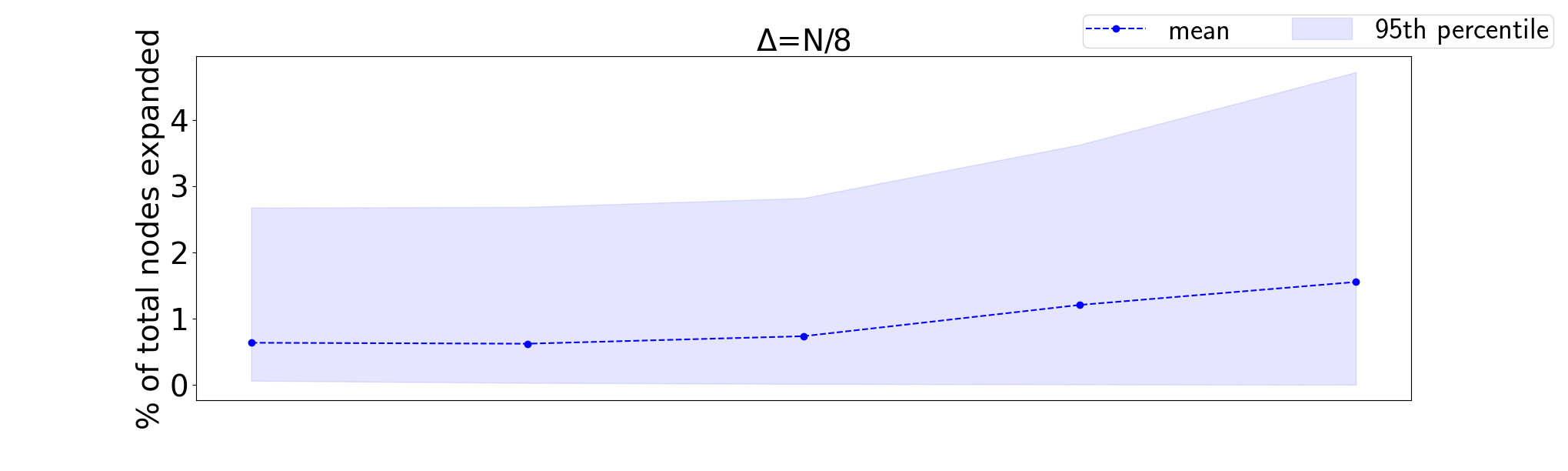}
\end{subfigure}
\begin{subfigure}{0.5\textwidth}
\centering
\includegraphics[scale = 0.18]{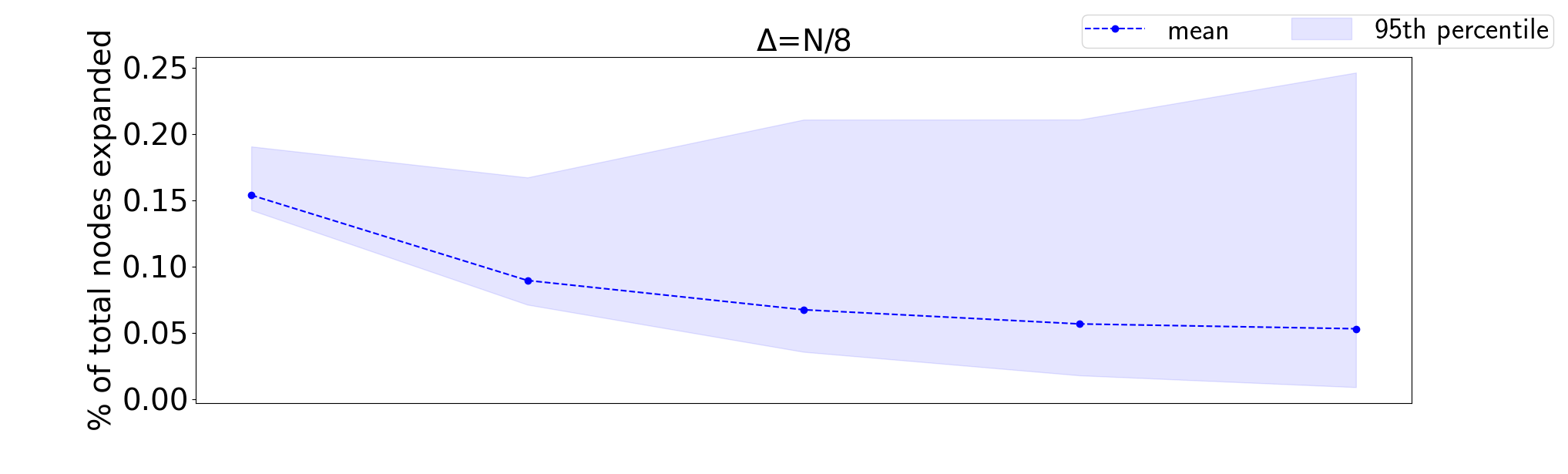}
\end{subfigure}
\hspace*{-1cm} \begin{subfigure}{0.55\textwidth}
\centering
\includegraphics[scale = 0.18]{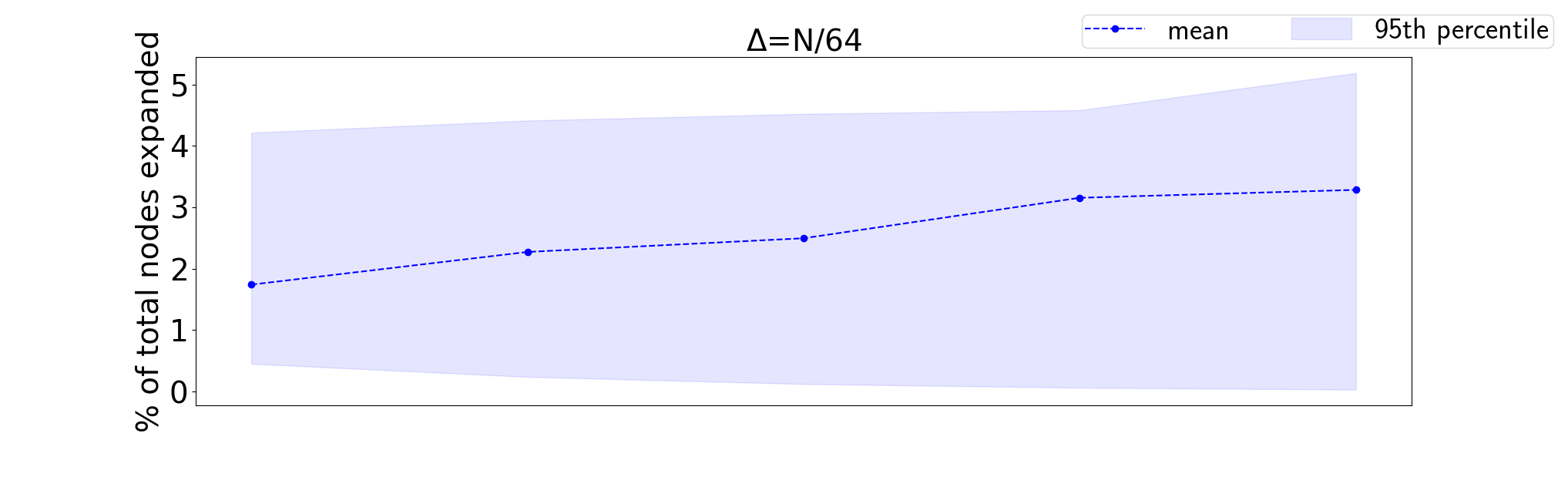}
\end{subfigure}
\begin{subfigure}{0.5\textwidth}
\centering
\includegraphics[scale = 0.18]{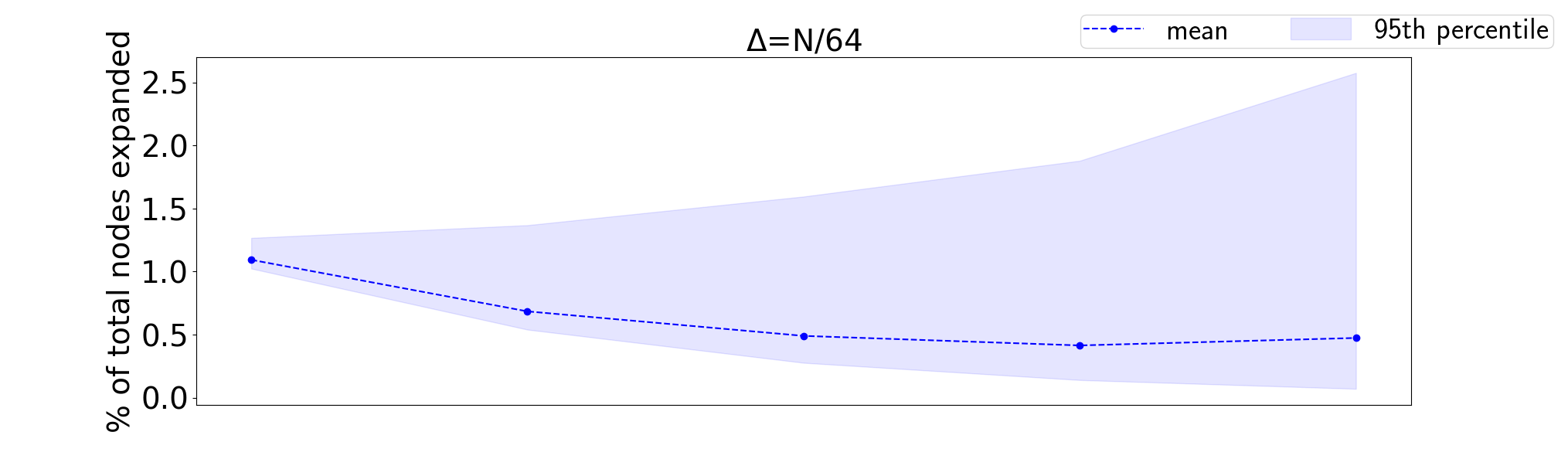}
\end{subfigure}
\hspace*{-1cm} \begin{subfigure}{0.55\textwidth}
\centering
\includegraphics[scale = 0.18]{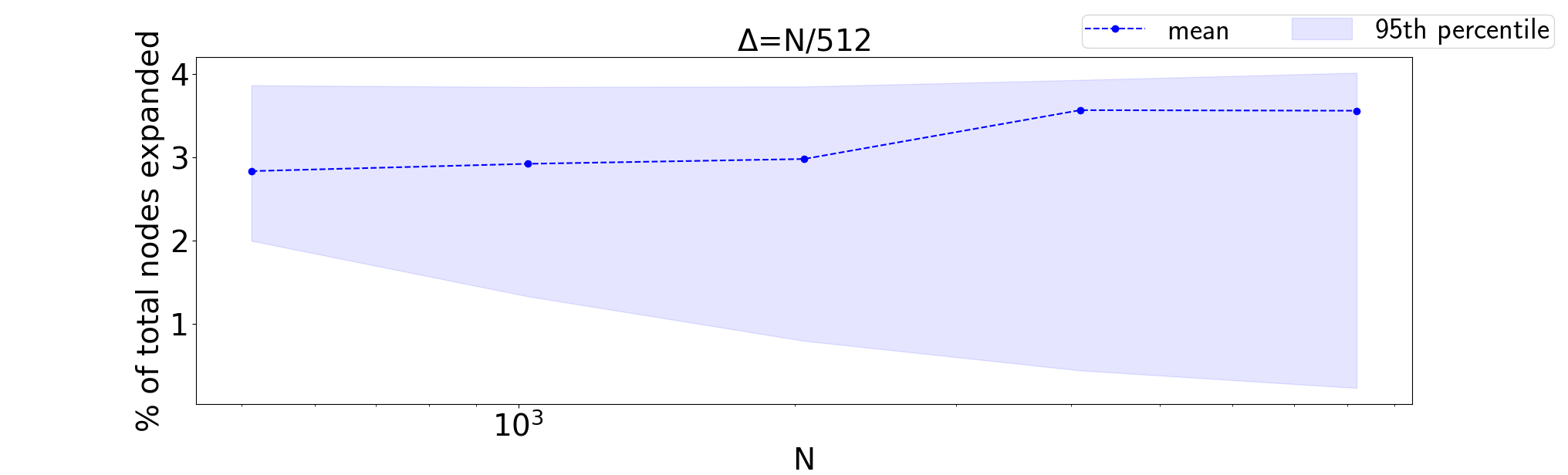}
\end{subfigure}
\begin{subfigure}{0.5\textwidth}
\centering
\includegraphics[scale = 0.18]{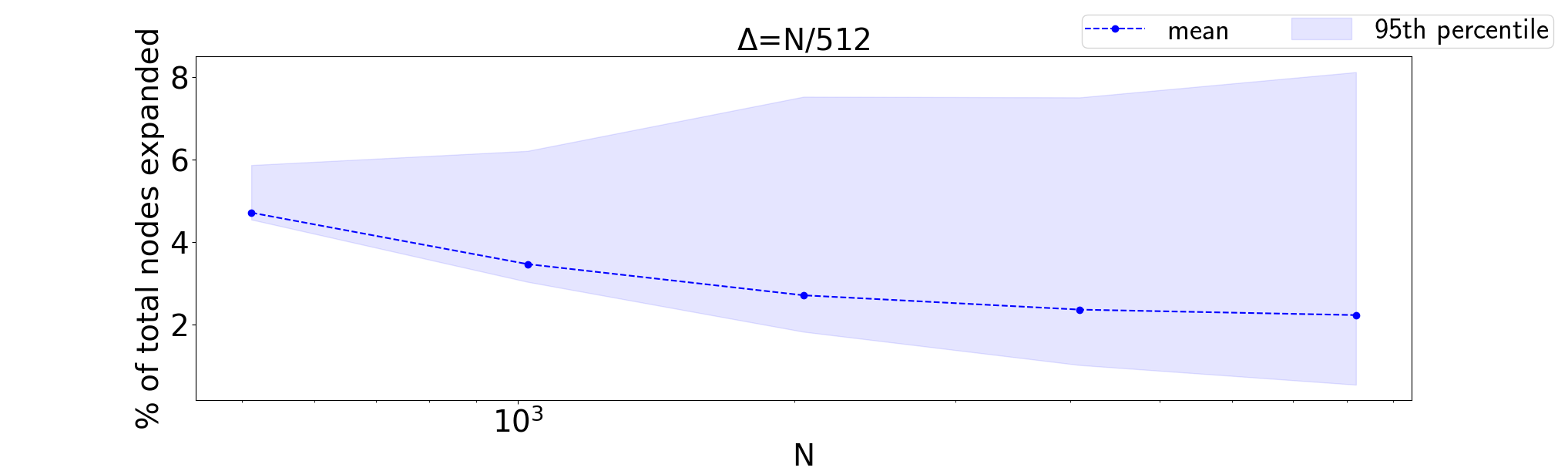}
\end{subfigure}
\caption{Mean percentage of nodes (blue, dashed) expanded by
\ref{itm:astar} for different values of $N$ for the graphs of
the \eqref{eq:ip} instances stemming from \eqref{eq:iocp_steady} (left)
and \eqref{eq:iocp_signal} (right). The lightly colored areas represent
the 95th percentile of expanded nodes, that is 5 percent of
the instances expand a larger fraction of the nodes in the graph.
The trust-region radii $\Delta$ are $\frac{1}{8} N$ (top),
$\frac{1}{64} N$ (center) and $\frac{1}{512} N$ (bottom).}
\label{fig:deviation}
\end{figure}

\begin{figure}[H]
\hspace*{-1cm}\begin{subfigure}{0.55\textwidth}
\centering
\includegraphics[scale = 0.18]{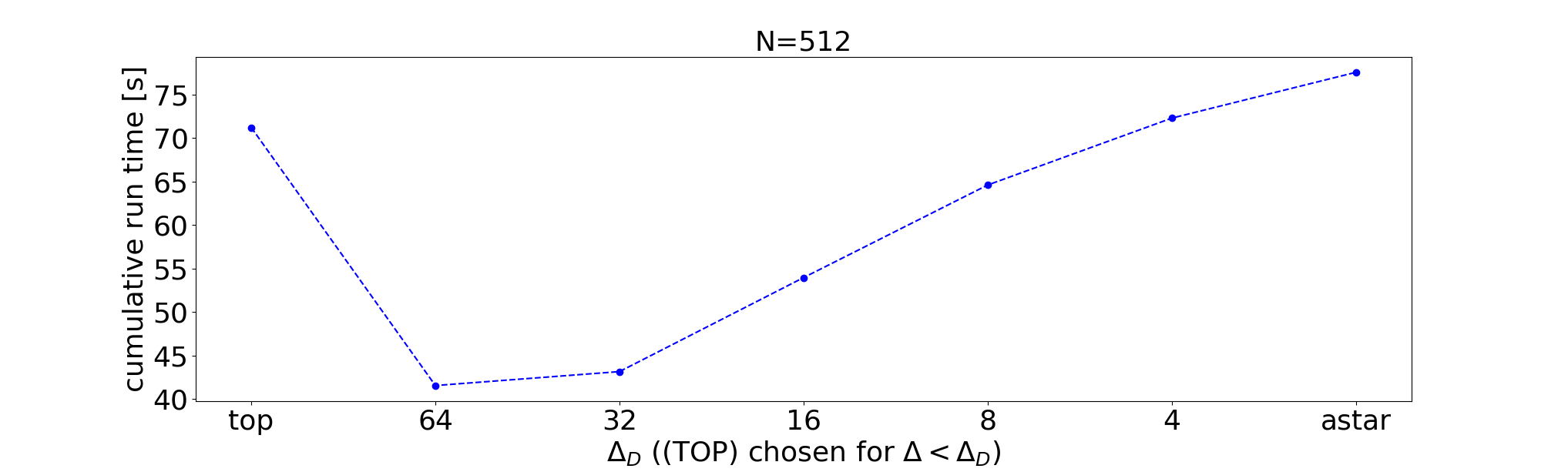}
\end{subfigure}
\begin{subfigure}{0.5\textwidth}
\centering
\includegraphics[scale = 0.18]{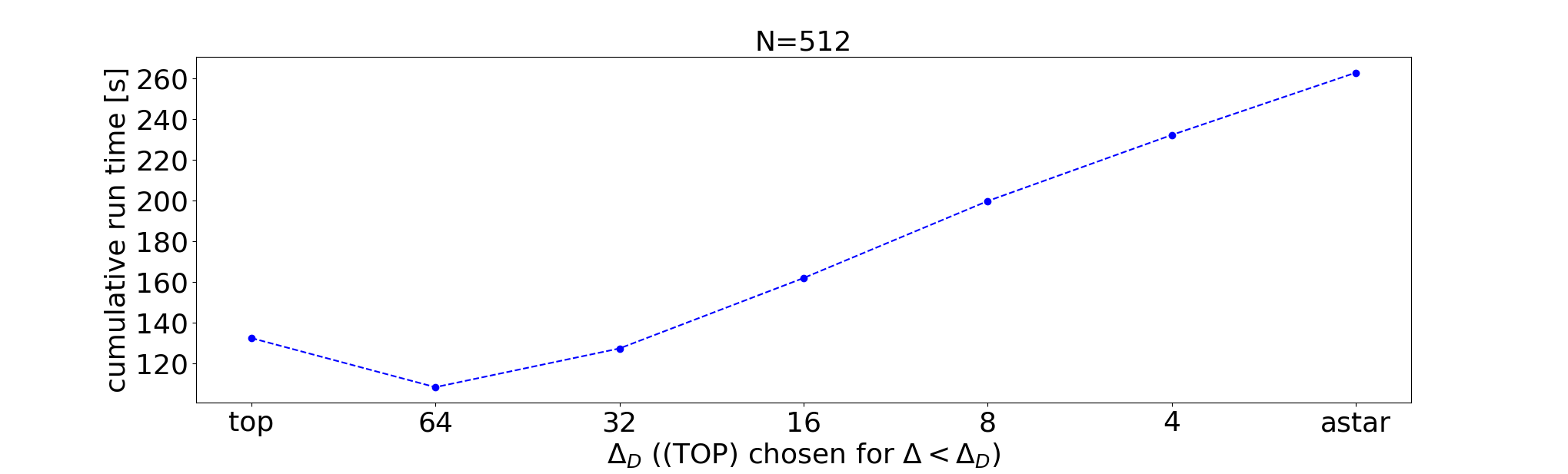}
\end{subfigure}
\hspace*{-1cm}\begin{subfigure}{0.55\textwidth}
\centering
\includegraphics[scale = 0.18]{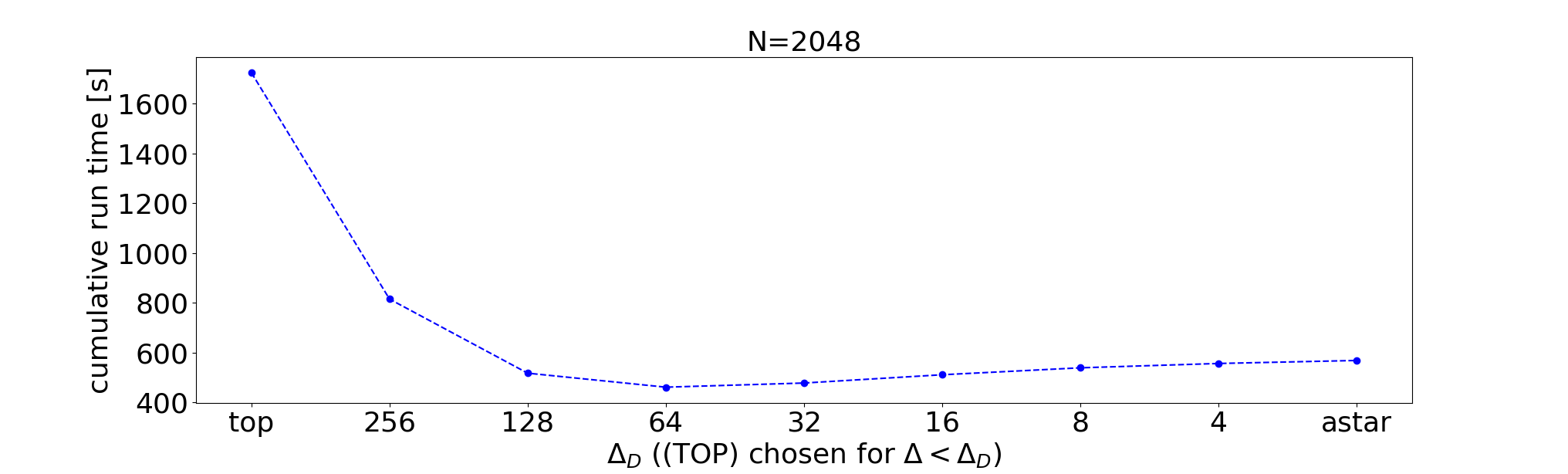}
\end{subfigure}
\begin{subfigure}{0.5\textwidth}
\centering
\includegraphics[scale = 0.18]{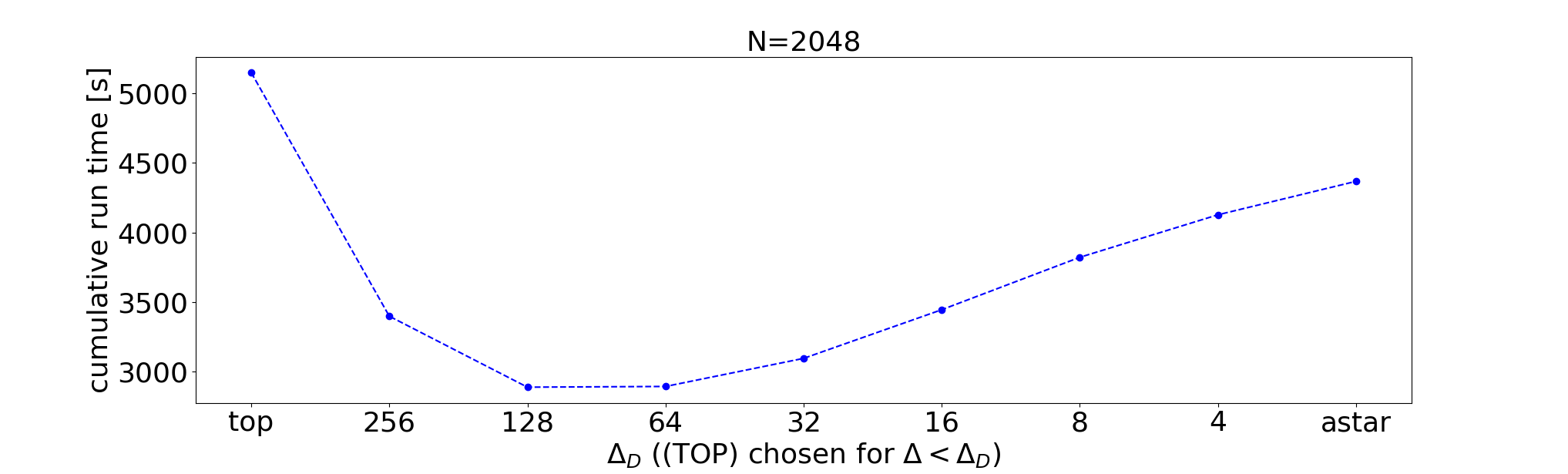}
\end{subfigure}
\hspace*{-1cm}\begin{subfigure}{0.55\textwidth}
\centering
\includegraphics[scale = 0.18]{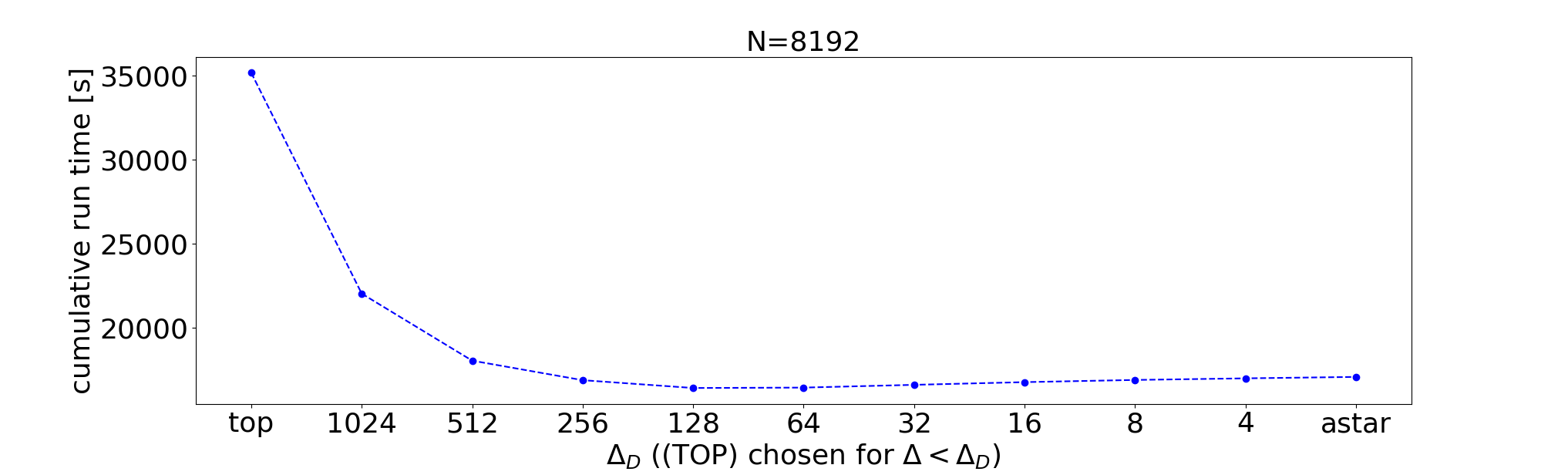}
\end{subfigure}
\begin{subfigure}{0.5\textwidth}
\centering
\includegraphics[scale = 0.18]{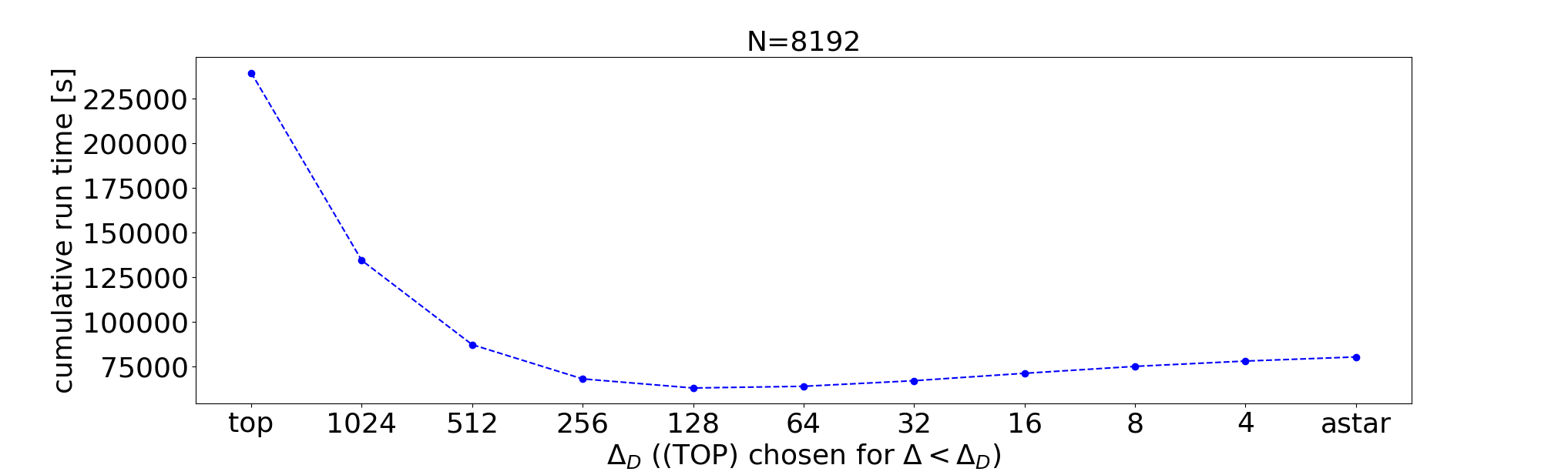}
\end{subfigure}
\caption{Cumulative run times for the subproblems of the form \eqref{eq:ip}
stemming from \eqref{eq:iocp_steady} (left) and \eqref{eq:iocp_signal}
(right) in the runs of \cref{alg:slip}
if one chooses \ref{itm:top} as subproblem solver for $\Delta < \Delta_D$
and else \ref{itm:astar} within \cref{alg:slip}.}
\label{fig:split}
\end{figure}

\bibliographystyle{apalike}
\bibliography{references}

\end{document}